\documentclass[final]{amsart}
\usepackage{a4}
\usepackage{amsmath}%
\usepackage{amstext}%
\usepackage{amssymb}%
\usepackage{amsthm}
\usepackage{comment}
\usepackage{listings}
\usepackage{url}

\usepackage[final]{graphicx}
\usepackage{subcaption}

\usepackage[margin=0.5in]{geometry}

\usepackage{graphicx}
\usepackage{epstopdf}

\usepackage{xcolor}

\setcounter{MaxMatrixCols}{10}

\newtheorem{theorem}{Theorem}

\newtheorem{corollary}[theorem]{Corollary}

\newtheorem{lemma}[theorem]{Lemma}

\newenvironment{remark}{\rem\rm}{\endrem}

\newcounter{unnumber}

\renewenvironment{proof}{\prf\rm}{\hfill{$\blacksquare$}\endprf}
\newcommand{\R}{\mathbb{R}}%
\newcommand{\N}{\mathbb{N}}%
\newcommand{\e}{\varepsilon}%
\newcommand{\ol}{\overline}%

\newcommand{\n}{{\nabla}}

\newcommand{\To}{\longrightarrow}
\def\a{\alpha}

\def\b{\beta}
\def\e{\epsilon}
\def\d{\delta}

\def\g{\gamma}

\def\<{\langle}
\def\>{\rangle}
\DeclareMathOperator*\proj{proj}%
\DeclareMathOperator*\argmin{argmin}

\textwidth17.5cm \textheight23.5cm
\oddsidemargin-0.5cm
\evensidemargin-0.5cm
\topmargin-1cm

\author{Szil\'ard Csaba L\'aszl\'o  }
\thanks{Corresponding Author: Szil\'ard Csaba L\'aszl\'o, Technical University of Cluj-Napoca, Department of Mathematics, Str. Memorandumului nr. 28, 400114 Cluj-Napoca, Romania, e-mail: szilard.laszlo@math.utcluj.ro}
\thanks{This work was supported by a grant of the Ministry of Research, Innovation and Digitization, CNCS -
UEFISCDI, project number PN-III-P1-1.1-TE-2021-0138, within PNCDI III}

\title{Solving convex optimization problems via a second order dynamical system with implicit Hessian damping and Tikhonov regularization}

\begin{document}

\begin{abstract}
This paper deals with a second order dynamical system with  a Tikhonov regularization term in connection  to the minimization problem of a convex Fr\'echet differentiable function.  The  fact that beside the asymptotically vanishing damping we also consider  an implicit Hessian driven damping  in the dynamical system under study allows us, via straightforward  explicit discretization, to obtain inertial algorithms of gradient type.
We show that the  value of the objective function in a generated trajectory converges rapidly to the global minimum of the objective function and depending the Tikhonov regularization parameter  the generated trajectory converges weakly to a minimizer of the objective function or the generated trajectory converges strongly to the element of minimal norm from the $\argmin$ set of the objective function. We also  obtain the fast convergence of the velocities towards zero and some integral estimates. Our analysis reveals that the Tikhonov regularization parameter and the damping parameters are strongly correlated, there is a setting of the parameters that separates the cases when  weak convergence of the trajectories to a minimizer and strong convergence of the trajectories to the minimal norm minimizer can be obtained.
\end{abstract}

\maketitle
\noindent \textbf{Key Words.} convex optimization;  continuous second order dynamical system; Hessian driven damping; Tikhonov regularization; convergence rate; strong convergence.
\vspace{1ex}

\noindent \textbf{AMS subject classification.} 34G20, 47J25, 90C25, 90C30, 65K10.

\markboth{S.C. L\'ASZL\'O}{Solving convex optimization problems via a second order dynamical system}

\section{Introduction}
\subsection{The problem formulation and a model result}

In this paper we study the asymptotical behaviour of the trajectories generated by the second order dynamical system
 \begin{align}\label{DynSys}\tag{DS}
\begin{cases}
\ddot{x}(t) + \frac{\a}{t^q} \dot{x}(t) + \nabla g\left(x(t) +\left(\g+\frac{\b}{t^q}\right)\dot{x}(t)\right) +\e(t)x(t)=0,\\
x(t_0) = u_0\in\mathcal{H}, \,
\dot{x}(t_0) = v_0\in\mathcal{H},
\end{cases}
\end{align}
in connection to the minimization problem
\begin{align}\label{P}\tag{P}
\inf_{x\in\mathcal{H}}g(x),
\end{align}
where $\mathcal{H}$ is a real Hilbert space endowed with the scalar product $\< \cdot,\cdot\>$ and norm $\|\cdot\|$ and
$g:\mathcal{H}\To\R$ is a convex Fr\'echet differentiable function. Throughout the paper we assume that $\n g$ is Lipschitz continuous on bounded sets and  the solution set of $g$, namely $\argmin g,$ is non-empty.

The starting time in \eqref{DynSys} is $t_0 > 0$ and we assume that $\a>0,\,q\in]0,1[$ and  $\g > 0, \, \b\in\R$ or $\g=0,\,\b> 0$. In case $\b<0$ we also assume that $t_0>\left|\frac{\b}{\g}\right|^{\frac{1}{q}}.$ Further, $\e:[t_0,+\infty[\To\R_+$ is a non-increasing function of class $C^1$, such that  $\lim\limits_{t\to+\infty}\e(t)=0.$

 Note that $\e(t)x(t)$ in \eqref{DynSys} is a Tikhonov regularization term that allows us to obtain the strong convergence of a  trajectory generated by \eqref{DynSys} to the minimal norm minimizer of $g,$ see \cite{AL-nemkoz,ABCR,L-jde,BCL-jma}. Further, we do not allow $q=1$ since in that case \eqref{DynSys} becomes the dynamical system studied in \cite{ALSiopt}. However, we show that by considering $q\in]0,1[$ the results obtained in \cite{ALSiopt} can be improved. Further we avoid the case $\g=\b=0$ since then \eqref{DynSys} becomes the dynamical system studied in \cite{L-jde}. Our analysis shows that the damping parameters and the Tikhonov regularization parameter are strongly correlated, there is a setting of the parameters which assure weak convergence of the trajectories generated by \eqref{DynSys} to a minimizer of $g$ and rapid convergence of the function values to the global minimum of $g$, meanwhile for another constellation of the parameters involved, rapid convergence of the function  values and strong convergence of the generated trajectories to the minimal norm minimizer is obtained. In order to visualize the facts emphasized above, let us specify $\e(t)=\frac{a}{t^p},\,a,p>0$ and let us denote $\b(t)=\g+\frac{\b}{t^q}.$ It is easy to see that $\lim\limits_{t \to +\infty} \e(t) = 0$, $\e(t)$ is a non-increasing function of class $C^1$ and satisfies $\e(t) \ge 0$ for every $t \ge t_0 > 0$. In this case the following result holds, see Corollary \ref{specweak} and Corollary \ref{specstr} and Theorem \ref{pq1}.
\begin{theorem}\label{ModelResult}
 Suppose that the regularization parameter has the form $\e(t)=\frac{a}{t^p},\,a,p>0$ and for some starting points $u_0, \, v_0\in\mathcal{H}$ let $x : [t_0, +\infty[ \to \mathcal{H}$ be the unique global solution of  \eqref{DynSys}. Then, the following results hold.
\begin{itemize}
\item[$(i)$] If $q+1<p\le 2$ and for $p=2$ one has  $a\ge q(1-q),$ then  the trajectory $x(t)$ converges weakly, as $t\to+\infty$, to an element of $\argmin g$. Further the following pointwise and integral estimates are valid.\\
One has $\| \dot{x}(t) \|=o\left(t^{-q}\right),$
$ g\left(x(t) + \b(t)\dot{x}(t)\right) - \min g =o\left(t^{-2q}\right) \text{ as } t\to+\infty$ and $\int_{t_0}^{+\infty} t^q \|\dot{x}(t)\|^2dt<+\infty,\, \int_{t_0}^{+\infty} t^q \left( g\left(x(t) + \b(t)\dot{x}(t)\right) - \min g\right)dt<+\infty.$ Concerning the gradient one has the integral estimates
$\int_{t_0}^{+\infty} t^{2q}\left\|\n g\left(x(t) + \b(t)\dot{x}(t)\right)\right\|^2dt<+\infty,$ if $\g>0,\,\b\in\R$
and
$ \int_{t_0}^{+\infty} t^{2q-1} \left\|\n g\left(x(t) + \b(t)\dot{x}(t)\right)\right\|^2dt<+\infty,$ if $\g=0,\,\b>0.$
\item[$(ii)$] If $q\le p<q+1$ and when $q=p$ one has $a\le \dfrac{\a}{2\g},$ then the trajectory $x(t)$ converges strongly, as $t \to+\infty$, to $x^*$, the element of minimum norm of $\argmin g$. Further the following pointwise and integral estimates are valid.\\
One has $\|\dot{x}(t) \|=\mathcal{O}(t^{-\frac{\min(2q,p)}{2}})$
and $g(x(t) + \b(t)\dot{x}(t) )-\min g=\mathcal{O}(t^{-\min(2q,p)})\mbox{ as }t\to+\infty,$ and if $p>2q$ then
$\int_{t_0}^{+\infty} t^q\|\dot{x}(t)\|^2 dt<+\infty,\,\int_{t_0}^{+\infty} t^q \left( g\left(x(t) + \b(t)\dot{x}(t)\right) - \min g\right)dt<+\infty.$ Concerning the gradient one has the integral estimates
$\int_{t_0}^{+\infty}t^{2q-1}\| \n g(x(t) + \b(t) \dot{x}(t))\|^2dt<+\infty,\mbox{ if }\g=0,\,\b>0\mbox{ and }p>2q$
and
$\int_{t_0}^{+\infty}t^{2q}\| \n g(x(t) + \b(t) \dot{x}(t))\|^2dt<+\infty$ if $\g>0,\,\b\in\R$ and $p>\max\left(\frac{2q+1}{2},2q\right).$
\item[$(iii)$] The case $p=q+1$ is critical in the sense that neither weak nor strong convergence of the trajectories can be obtained.  However, also in this case the trajectory $x(t)$ is bounded and one has $\| \dot{x}(t) \|=\mathcal{O}\left(t^{-q}\right),$
$ g\left(x(t) + \b(t)\dot{x}(t)\right) - \min g =\mathcal{O}\left(t^{-2q}\right) \text{ as } t\to+\infty$

\end{itemize}
\end{theorem}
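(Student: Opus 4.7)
The plan is to derive the three parts of Theorem~\ref{ModelResult} as direct specializations of the general convergence results referenced in the statement: Corollary~\ref{specweak} for part~(i), Corollary~\ref{specstr} for part~(ii), and Theorem~\ref{pq1} for part~(iii). The entire content of the proof is therefore to substitute $\e(t)=a/t^p$ into the abstract hypotheses imposed in those results and verify that they translate into the explicit inequalities on $p$ and $a$ appearing in the statement. A preliminary check is that, for any $a,p>0$, the function $\e(t)=a/t^p$ is of class $C^1$, non-negative, non-increasing and tends to $0$ at infinity, so the standing hypotheses on the regularization parameter are automatically satisfied.

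For part~(i), I expect the weak-convergence corollary to impose integrability conditions of the form $\int_{t_0}^{+\infty} t^{q}\e(t)\,dt<+\infty$ and $\int_{t_0}^{+\infty} t^{2q}\e(t)^2\,dt<+\infty$, together with smallness conditions on $\dot\e$. Specialized to $\e(t)=a/t^p$, these integrals reduce to $\int t^{q-p}\,dt$ and $\int t^{2q-2p}\,dt$, which converge precisely when $p>q+1$. The upper bound $p\le 2$ encodes the compatibility of the Tikhonov rate with the vanishing damping $\a/t^q$, and the threshold $a\ge q(1-q)$ at the boundary $p=2$ is the borderline constant arising in the Lyapunov energy construction: it is exactly what is needed to close the sign analysis of the cross terms involving $\dot\e$ and $\ddot\e$. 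Once these sign conditions hold, the pointwise decay rates $\|\dot x(t)\|=o(t^{-q})$ and $g(x(t)+\b(t)\dot x(t))-\min g=o(t^{-2q})$, as well as the two integral estimates in $t^q$ and the two gradient estimates (in $t^{2q}$ when $\g>0$ and in $t^{2q-1}$ when $\g=0$), follow by direct substitution in the Lyapunov inequality supplied by Corollary~\ref{specweak}.

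For part~(ii), strong convergence requires that $\e(t)$ decay slowly enough so that the Tikhonov pull dominates asymptotically, which translates into $p<q+1$. The pointwise decays $\|\dot x(t)\|=\mathcal{O}(t^{-\min(2q,p)/2})$ and $g(x(t)+\b(t)\dot x(t))-\min g=\mathcal{O}(t^{-\min(2q,p)})$ are read off from the Lyapunov estimate of Corollary~\ref{specstr} specialized to $\e(t)=a/t^p$. The constraint $a\le \a/(2\g)$ at the boundary $p=q$ reflects the fact that in this regime the Tikhonov coefficient $a/t^q$ has the same order as the damping coefficient $\a/t^q$, so only an explicit numerical control on $a$ preserves the positivity of the dissipative terms involving $\g$. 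The additional integrability conditions $p>2q$ and $p>\max(2q,(2q+1)/2)$ for the velocity/function and for the gradient integrals come from imposing convergence of $\int t^{q}(\e(t)\|x(t)\|)^2\,dt$ and of the analogous gradient-squared integrals. Part~(iii) corresponds to the critical exponent $p=q+1$, at which all the borderline integrals in (i) and (ii) diverge logarithmically; Theorem~\ref{pq1} then delivers only $\mathcal{O}$-type rates and the boundedness of the trajectory, with neither weak nor strong convergence available.

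The main obstacle in this specialization is the careful bookkeeping at the three borderline regimes $p=2$ in (i), $p=q$ in (ii) and $p=q+1$ in (iii): in each one must identify the precise constant in the Lyapunov construction that becomes critical and impose the sharp inequality on $a$ or $p$ which still keeps the associated quadratic form non-negative. The second source of bookkeeping is the split $\g>0$ versus $\g=0$ in the gradient estimates, where different integrability thresholds arise because the Hessian-driven damping contributes an additional power of $t$ to the dissipation. Once these borderline constants are pinned down, the remaining computations are routine evaluations of improper integrals of powers of $t$.
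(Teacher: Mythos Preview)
Your high-level plan is correct and matches the paper exactly: Theorem~\ref{ModelResult} receives no independent proof in the paper but is stated as a direct consequence of Corollary~\ref{specweak}, Corollary~\ref{specstr}, and Theorem~\ref{pq1}. However, you are overcomplicating matters. Those three cited results are \emph{not} abstract statements requiring you to plug in $\e(t)=a/t^p$ and verify hypotheses: they are themselves already stated for $\e(t)=a/t^p$ with precisely the parameter ranges $q+1<p\le2$, $q\le p<q+1$, and $p=q+1$ appearing in Theorem~\ref{ModelResult}. The hypothesis verification (translating the abstract conditions (C0) and $\int t^q\e(t)\,dt<\infty$ behind the weak-convergence results, and (C1)--(C3) behind the strong-convergence result, into constraints on $p$ and $a$) has already been carried out in the proofs of those corollaries and in Remarks~\ref{remcond} and~\ref{rcond}. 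Theorem~\ref{ModelResult} is therefore a verbatim compilation, and its ``proof'' is a single sentence pointing to those three results.

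Some of your speculation about the abstract conditions is also off: there is no hypothesis of the form $\int t^{2q}\e(t)^2\,dt<\infty$ in the paper, and the threshold $a\ge q(1-q)$ at $p=2$ does not arise from terms involving $\dot\e$ or $\ddot\e$ but from the lower bound in condition~(C0), namely $\e(t)\ge q(1-q)/t^2$, which is used to control the coefficient of $\|x(t)-x^\ast\|^2$ in the Lyapunov derivative. None of this affects the correctness of your overall reduction, but it means the actual work required here is essentially nil.
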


\subsection{Motivation and related works}
Starting with the seminal work of Su, Boyd and Cand\`es \cite{SBC}, second order dynamical systems have been intensively studied in the literature in connection to the minimization problem having in its objective a (convex) smooth function, see \cite{ALP,att-c-p-r-math-pr2018,alv-att-bolte-red,AC,ACFR,AGR,att-p-r-jde2016,APR,BM,BCL-jee,BCL-AA,CAG1,CAG2,MJ,SDJS}. These dynamical systems lead via implicit/explicit discretizations to inertial algorithms, hence the asymptotical behaviour of the  trajectories generated by these dynamical systems give an insight into the behaviour of the sequences generated by the algorithms obtained via discretization see \cite{ALV,Nest,nesterov83,poly,L-mapr}.

Indeed, according to \cite{SBC} the dynamical system
\begin{equation}\label{ee11}
\ddot{x}(t)+\frac{\a}{t}\dot{x}(t)+\n g(x(t))=0,\,\,x(t_0)=u_0,\,\dot{x}(t_0)=v_0,\,t_0>0,\,u_0,v_0\in\mathcal{H}
\end{equation}
with $\a=3$ is the exact limit of Nesterov's accelerated convex gradient method \cite{nesterov83} and that is the reason why
\eqref{ee11} has been intensively studied in the literature in connection to the minimization problem
$\inf_{x\in\mathcal{H}}g(x).$  In \cite{SBC} the authors proved that $g(x(t))-\min g=\mathcal{O}\left(\frac{1}{t^2}\right)$
for every $\a\ge 3$ as $t\To+\infty$, however they did not show the convergence of a generated trajectory to a minimum of the objective function $g.$

In  \cite{att-c-p-r-math-pr2018}, Attouch, Chbani,  Peypouquet and Redont considered the case $\a>3$ in \eqref{ee11}, and showed that  the  generated  trajectory $x(t)$  converges weakly to  a  minimizer  of $g$ as $t\To+\infty$ and also that $g(x(t))-\min g=o\left(\frac{1}{t^2}\right)$ as $t\To+\infty$. A perturbed version of \eqref{ee11} was studied by Attouch,   Peypouquet and Redont in \cite{att-p-r-jde2016}, see also \cite{alv-att-bolte-red, ACFR, APR, SDJS}. They assumed that the objective $g$ is twice continuously differentiable and the perturbation of their system is made at the damping term. More precisely, they studied the dynamical system with Hessian driven damping
\begin{equation}\label{ee13}
\ddot{x}(t)+\frac{\a}{t}\dot{x}(t)+\b\n^2g(x(t))\dot{x}(t)+\n g(x(t))=0,\,\,x(t_0)=u_0,\,\dot{x}(t_0)=v_0,\,t_0>0,\,u_0,v_0\in\mathcal{H},
\end{equation}
where $\a>0$ and $\b>0.$
In case $\a>3,\,\b>0$ they showed the weak convergence of a generated trajectory to a minimizer of $g$. Moreover, they obtained that in this case the convergence rate of the objective function along the trajectory, that is $g(x(t))-\min g,$ is of order ${o}\left(\frac{1}{t^2}\right)$.

What one can notice concerning the dynamical systems \eqref{ee11} and \eqref{ee13} is that  these systems will never give through the natural implicit/explicit discretization Nesterov type inertial algorithms, since the gradient of $g$ is evaluated in $x(t),$ and this term via implicit/explicit discretization will become $\n g(x_n)$ or $\n g(x_{n+1})$ and never of the form $\n g(x_n+\a_n(x_n-x_{n-1}))$ as Nesterov's gradient method requires. That was the reason why in \cite{ALP}, (see also \cite{MJ}), Alecsa, L\'aszl\'o and Pin\c ta  introduced  the dynamical system
\begin{equation}\label{ee14}
\ddot{x}(t)+\frac{\a}{t}\dot{x}(t)+\nabla g\left(x(t)+\left(\g+\frac{\b}{t}\right)\dot{x}(t)\right)=0,\,
x(t_0)=u_0,\,\dot{x}(t_0)=v_0,\,t_0>0,\,u_0,v_0\in \mathcal{H}
\end{equation}
where $\a>0,\,\b\in\R,\,\g\ge 0.$  The term $\nabla g\left(x(t)+\left(\g+\frac{\b}{t}\right)\dot{x}(t)\right)$ in \eqref{ee14} is called implicit Hessian damping due to the fact that by using the Taylor expansion for $\n g(\cdot)$ we get
$
\n g\left(x(t)+\left(\g+\frac{\b}{t}\right)\dot{x}(t)\right)\approx \n g(x(t))+\left(\g+\frac{\b}{t}\right)\n^2 g(x(t))\dot{x}(t).
$
This fact also explains why the trajectories of \eqref{ee13} and \eqref{ee14} share a similar behaviour. Indeed, the objective function value in a trajectory generated by \eqref{ee14} converges in order $\mathcal{O}\left(\frac{1}{t^2}\right)$ to the global minimum of the objective function and the generated trajectory converges weakly to a minimizer of the objective function $g,$ see \cite{ALP}. Further, according to \cite{ALSiopt}, the dynamical system \eqref{ee14} can be thought as an intermediate system between the dynamical system \eqref{ee11} and the dynamical system \eqref{ee13}, which inherits the best properties of the latter systems. Note that explicit Euler discretization of \eqref{ee14} leads to Nesterov type inertial algorithms, where the gradient is evaluated at an extrapolated term of the form $x_n+\frac{\g n+\b}{n+\a}(x_n-x_{n-1}),$ see \cite{ALP}.

Tikhonov regularized versions of \eqref{ee11} and \eqref{ee13} were studied in \cite{ACR} and \cite{BCL}. In these papers, for a constellation of the parameters the authors obtained fast convergence  for the function values in a generated trajectory and weak convergence of the generated trajectory to a minimizer of the objective function, meanwhile for another setting of the parameters the strong convergence result $\liminf_{t\to+\infty}\|x(t)-x^*\|=0$ have been obtained, where $x(t)$ is the generated trajectory and $x^*$ is the minimal norm minimizer of the objective function. For a while it was an open question whether one can obtain fast convergence of the function values  and strong convergence results in a generated trajectory for the same constellation of the parameters involved. The first breakthrough has been obtained in \cite{AL-nemkoz}  where the authors   introduced the dynamical system
\begin{align}\label{DynSysABCR}
&\ddot{x}(t) + \d\sqrt{\e(t)} \dot{x}(t) +\nabla g\left(x(t)\right) +\e(t)x(t)=0,\, x(t_0) = u_0, \,
\dot{x}(t_0) = v_0,
\end{align}
with $\d>0$ and succeeded to obtain both fast convergence towards the minimal value of the objective function and the strong convergence result $\liminf_{t\to+\infty}\|x(t)-x^*\|=0$. Similar results have been obtained in \cite{ALSiopt} for the Tikhonov regularized second order dynamical system \eqref{ee14}, where the Tikhonov regularization parameter has the form $\e(t)=\frac{a}{t^2}$  and $\a>3.$ In contrast, in this paper we assume that $0<q<1$ instead of $q=1$ considered in \cite{ALSiopt} and beside fast convergence rates we are able to obtain the strong convergence result $\lim_{t\to+\infty}\|x(t)-x^*\|=0$. Moreover these results hold for every $\a>0$ and every Tikhonov regularization parameter $\e(t)=\frac{a}{t^p},\,q\le p<q+1.$

The results from \cite{AL-nemkoz} have been improved in \cite{ABCR} where the authors studied the dynamical system \eqref{DynSysABCR} and succeeded to obtain the strong convergence result $\lim_{t\to+\infty}\|x(t)-x^*\|=0.$ These results then have been extended and further improved in \cite{L-jde}, where it is also shown that the best choice for the damping parameter is not $\sqrt{\e(t)}.$ We emphasize that the results of this paper are in concordance to the results obtained in \cite{L-jde}. Further, the results from \cite{BCL} have been improved in \cite{ABCRamop} where the authors succeeded to obtain fast convergence rates and the strong convergence result $\lim_{t\to+\infty}\|x(t)-x^*\|=0.$ Since, as we emphasized before, \eqref{DynSys} and the Tikhonov regularized system with Hessian driven damping considered in \cite{ABCRamop} are strongly related, let us compare the results from this paper and the results from \cite{ABCRamop}. The second order dynamical system with Hessian driven damping and Tikhonov regularization studied in \cite{ABCRamop} is
\begin{align}\label{DynSysABCRamop}
&\ddot{x}(t) + \d\sqrt{\e(t)} \dot{x}(t)+\b\nabla^2 g\left(x(t)\right)\dot{x}(t) +\nabla g\left(x(t)\right) +\e(t)x(t)=0,\, x(t_0) = u_0, \,
\dot{x}(t_0) = v_0,
\end{align}
where $g$ is a convex function of class $C^2$, $\d,\b>0$ and $\e(t)$ is a Tikhonov regularization parameter. According to the model result from \cite{ABCRamop}, when one specify $\e(t)=\frac{1}{t^{2q}},\,0<q<1$ then the  trajectories generated by the dynamical system \eqref{DynSysABCRamop} converge strongly to the minimal norm minimizer of $g$, $g(x(t))-\min g=\mathcal{O}(t^{-2q})\mbox{ as }t\to+\infty$ and $\int_{t_0}^{+\infty}t^{3q-1}\| \n g(x(t))\|^2dt<+\infty.$ In contrast, we assume that our objective function is of class $C^1$ and  according to Theorem \ref{ModelResult} the  trajectories generated by the dynamical system \eqref{DynSysABCRamop} converge strongly to the minimal norm minimizer of $g$ whenever $q\le p<q+1$, and if $2q<p<q+1$ then
$g(x(t) + \b(t)\dot{x}(t) )-\min g=\mathcal{O}(t^{-2q})\mbox{ as }t\to+\infty,$ but we also have the pointwise estimate for the velocity $\|\dot{x}(t) \|=\mathcal{O}(t^{-q})$. Moreover, we have the integral estimates $\int_{t_0}^{+\infty} t^q\|\dot{x}(t)\|^2 dt<+\infty,\,\int_{t_0}^{+\infty} t^q \left( g\left(x(t) + \b(t)\dot{x}(t)\right) - \min g\right)dt<+\infty$ and 
$\int_{t_0}^{+\infty}t^{2q}\| \n g(x(t) + \b(t) \dot{x}(t))\|^2dt<+\infty,\mbox{ if }\g>0,\,\b\in\R \mbox{ and }p>\max\left(\frac{2q+1}{2},2q\right).$

Observe that in this paper we are able to obtain extra and improved pointwise and integral estimates, which shows again that, though the damping parameters and the Tikhonov regularization parameter are strongly correlated, the best choice of the damping is not $\d\sqrt{\e(t)},\,\d>0.$

Finally, note that explicit Euler discretization of the dynamical system \eqref{DynSys} yields to inertial gradient methods. Indeed,  by discretizing \eqref{DynSys} with the constant stepsize $h$,  $t_n=nh,\,\,x_n=x(t_n),\,\e(t_n)=\e_n$  we get
$$\frac{x_{n+1}-2x_n+x_{n-1}}{h^{2}}+\frac{\a}{h^{q+1}n^{q}}(x_n-x_{n-1})+\n g\left(x_n+\left(\frac{\g}{h}+\frac{\b}{h^{q+1}n^q}\right)(x_n-x_{n-1})\right)+\e_nx_n=0.$$

Equivalently, by denoting $h^{2q}$ with $s$, $h^{1-q}\a$ still with $\a$, $\frac{\g}{h}$ still with $\g,$ $\frac{\b}{h^{q+1}}$ still with $\b$ and  $h^{2q}\e_n$ still with $\e_n$, the latter equation can be written as a gradient type inertial algorithm with a Tikhonov regularization term, that is,
\begin{equation}\label{discrete}
x_{n+1}=x_n+\left(1-\frac{\a}{n^q}\right)(x_n-x_{n-1})-s\n g\left(x_n+\left(\g+\frac{\b}{n^q}\right)(x_n-x_{n-1})\right)-\e_nx_n.
\end{equation}
We emphasize that algorithm \eqref{discrete} has a similar form as the Nesterov type algorithm with double Tikhonov regularization studied in \cite{MKSL}, hence the results from this paper opens the gate for the study of the strong convergence of inertial gradient type algorithms with a Tikhonov regularization term.

\subsection{The organization of the paper}
The outline of the paper is the following. After a short section concerning the existence and uniqueness of the
trajectories generated by the dynamical system \eqref{DynSys}, in section 3 we deal with the setting of the parameters that assure weak convergence of the generated trajectories to a minimizer of the objective function. To this purpose we introduce a general energy functional, which will play the role of a Lyapunov function associated to the dynamical system \eqref{DynSys} and beside the weak convergence of the generated trajectories to a minimizer of the objective function we  obtain fast convergence rates for the function values and velocity but also some integral estimates involving the gradient of the objective function. In section 4 for another constellation of the parameters, by constructing a powerful Lyapunov function we obtain strong convergence of the generated trajectories to the minimal norm minimizer of the objective function, but also fast convergence of the function values in a generated trajectory and fast convergence of the velocity. Some integral estimates are also provided.
In section 5 we conclude our paper and we present some possible related future researches.

\section{Existence and uniqueness of the trajectories generated by the dynamical system \eqref{DynSys}}
In what follows we show the existence and uniqueness of a classical $C^2$ solution $x$ of the dynamical system \eqref{DynSys}. To this purpose we rewrite (\ref{DynSys})  as a first order system relevant for the Cauchy-Lipschitz-Picard theorem.
\begin{theorem}\label{exuniq}
Let $(u_0, v_0) \in \mathcal{H} \times \mathcal{H}$. Then, the dynamical system (\ref{DynSys}) admits a unique global $C^2(]t_0,+\infty[,\mathcal{H})$ solution.
\end{theorem}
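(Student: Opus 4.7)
The plan is to recast \eqref{DynSys} as a first-order system in the product Hilbert space $\mathcal{H}\times\mathcal{H}$, apply the Cauchy-Lipschitz-Picard theorem for local existence and uniqueness, and then upgrade the local solution to a global one by ruling out finite-time blow-up via an a priori energy bound. Writing $X(t)=(x(t),\dot x(t))$ and $\b(t)=\g+\b/t^q$, the system \eqref{DynSys} becomes $\dot X(t)=F(t,X(t))$, $X(t_0)=(u_0,v_0)$, with
\[
F(t,(u,v))=\bigl(v,\,-\tfrac{\a}{t^q}v-\n g(u+\b(t)v)-\e(t)u\bigr).
\]
The standing hypotheses ($t_0>0$, together with $t_0>|\b/\g|^{1/q}$ when $\b<0$, and $\e\in C^1$) ensure that the time-dependent coefficients of $F$ are of class $C^1$ on $[t_0,+\infty[$.

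For local existence and uniqueness, the crucial ingredient is that $\n g$ is Lipschitz on bounded subsets of $\mathcal{H}$: this makes $F(t,\cdot)$ Lipschitz on every closed ball of $\mathcal{H}\times\mathcal{H}$, uniformly in $t$ on compact subintervals of $[t_0,+\infty[$, and Cauchy-Lipschitz-Picard then delivers a unique maximal trajectory $X\in C^1([t_0,T_{\max}[,\mathcal{H}\times\mathcal{H})$. The $C^2$ regularity of $x$ is automatic, since $\ddot x(t)$ is expressed by \eqref{DynSys} as a continuous function of $(t,x(t),\dot x(t))$. To show $T_{\max}=+\infty$ I would argue by contradiction: assuming $T_{\max}<+\infty$, the blow-up alternative gives $\limsup_{t\to T_{\max}^-}(\|x(t)\|+\|\dot x(t)\|)=+\infty$, and I would rule this out with an energy functional of the form
\[
\mathcal{E}(t)=\tfrac12\|\dot x(t)\|^2+g\bigl(x(t)+\b(t)\dot x(t)\bigr)-\min g+\tfrac12\|x(t)\|^2.
\]
Differentiating $\mathcal{E}$ along a solution, substituting $\ddot x$ from \eqref{DynSys}, and invoking convexity of $g$ (via $g(x+\b(t)\dot x)-g(x)\le\langle\n g(x+\b(t)\dot x),\b(t)\dot x\rangle$) together with the boundedness of $\a/t^q$, $\b(t)$, $\dot\b(t)$ and $\e(t)$ on the finite interval $[t_0,T_{\max}]$ should yield a differential inequality of the shape $\dot{\mathcal{E}}(t)\le C_1\mathcal{E}(t)+C_2$; Gr\"onwall's lemma then bounds $\mathcal{E}$, and hence $\|\dot x\|$ and $\|x\|$, uniformly on $[t_0,T_{\max}[$, contradicting the blow-up alternative.

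The main obstacle is the calibration of the energy: the implicit Hessian damping produces a cross term $\langle\n g(x+\b(t)\dot x),\dot x\rangle$ in $\dot{\mathcal{E}}$ whose sign is not evident, and the Lyapunov candidate must be chosen so that this term combines with the dissipative contribution $-\tfrac{\a}{t^q}\|\dot x\|^2$ and the Tikhonov pieces into a Gr\"onwall-compatible estimate. Once a suitable energy is in hand, the remaining Cauchy-Lipschitz machinery is entirely standard.
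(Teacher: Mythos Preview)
Your overall strategy matches the paper's: reduce to a first-order system, invoke Cauchy--Lipschitz--Picard for local existence and uniqueness, and rule out finite-time blow-up by an energy bound. The difference lies in the choice of energy and the resulting estimate.

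The paper takes the simpler functional
\[
W(t)=g(x(t))+\tfrac12\|\dot x(t)\|^{2}+\tfrac{\e(t)}{2}\|x(t)\|^{2},
\]
with $g$ evaluated at $x(t)$ rather than at the extrapolated point. Differentiating and substituting $\ddot x$ from \eqref{DynSys} produces exactly one potentially troublesome cross term, namely $\langle\n g(x(t))-\n g(x(t)+\b(t)\dot x(t)),\dot x(t)\rangle$. Writing this as $-\b(t)^{-1}\langle\n g(x(t))-\n g(x(t)+\b(t)\dot x(t)),\,x(t)-(x(t)+\b(t)\dot x(t))\rangle$ and invoking \emph{monotonicity} of $\n g$ (rather than the gradient inequality you propose) shows it is nonpositive; together with $\dot\e(t)\le 0$ this yields $\dot W(t)\le 0$ outright. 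No Gr\"onwall step is needed: $W$ is a genuine Lyapunov function, and $\|\dot x\|$ stays bounded because $g$ is bounded below.

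Your choice of $\mathcal{E}$, with $g$ at the extrapolated point, forces you to confront additional terms such as $\langle\n g(x+\b\dot x),\dot x\rangle$ and $\langle\n g(x+\b\dot x),x\rangle$ whose sign you cannot control. These can indeed be absorbed via Young's inequality into the helpful term $-\b(t)\|\n g(x+\b\dot x)\|^{2}$ that appears in $\dot{\mathcal E}$ (this requires $\b(t)$ bounded away from zero on the finite interval $[t_0,T_{\max}]$, which holds), after which Gr\"onwall closes the argument. So your route is workable, but it trades a one-line monotonicity observation for a sequence of parameter-tuned inequalities. The convexity inequality you cite, $g(x+\b\dot x)-g(x)\le\langle\n g(x+\b\dot x),\b\dot x\rangle$, does not by itself resolve the obstacle you flag; the cleaner remedy is the monotonicity of $\n g$, applied to the energy with $g(x(t))$ rather than $g(x(t)+\b(t)\dot x(t))$.
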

\begin{proof}
Indeed, by using the notation  $X(t) := (x(t), \dot{x}(t))$, the dynamical system (\ref{DynSys}) can be put in the  form
\begin{align}\label{DynSysFirstOrder2}
\begin{cases}
\dot{X}(t) = F(t, X(t)) \\
X(t_0) = (u_0, v_0),
\end{cases}
\end{align}
where $F : [t_0, +\infty[ \times \mathcal{H} \times \mathcal{H} \To \mathcal{H} \times \mathcal{H}$,
$
F(t, u, v) = \left( v, -\frac{\alpha}{t^q} v - \e(t)u  - \nabla g\left(u +\b(t)v\right) \right),
$
and we denote $\b(t)=\g+\frac{\b}{t^q}.$ Note that due to our assumptions $\b(t)>0$ on the interval $[t_0,+\infty[.$

 Our proof is inspired from \cite{AC}. Since $\n g$ is Lipschitz on bounded sets, it is obvious that for \eqref{DynSysFirstOrder2} the classical Cauchy-Picard theorem can be applied, hence, there exist a unique  $C^1$ local solution $X.$ Consequently, \eqref{DynSys} has a unique  $C^2$ local solution. Let $x$ be a maximal solution of  \eqref{DynSys}, defined on an interval $[t_0,T_{\max}[,\,T_{\max}\le+\infty.$

 In order to prove that  $\dot{x}$ is bounded  we consider the energy functional $W:[t_0,+\infty[\to\R,$
\begin{align}\label{Efct1}
W(t) = g(x(t)) + \frac{1}{2} \| \dot{x}(t) \|^2 + \frac{\e(t)}{2} \| x(t) \|^2.
\end{align}
The time derivative of \eqref{Efct1} reads as
\begin{align*}
\dot{W}(t) = \langle \nabla g(x(t)), \dot{x}(t) \rangle + \langle \dot{x}(t), \ddot{x}(t) \rangle + \frac{\dot{\e}(t)}{2} \| x(t) \|^2 + \e(t) \langle x(t), \dot{x}(t) \rangle.
\end{align*}
From \eqref{DynSys} we have $\ddot{x}(t) = - \frac{\alpha}{t^q} \dot{x}(t) - \e(t) x(t) - \nabla g(x(t) + \beta(t) \dot{x}(t))$, and we obtain
\begin{align*}
\dot{W}(t) &=  \langle \n g(x(t))-\nabla g(x(t) + \beta(t) \dot{x}(t)), \dot{x}(t) \rangle + \frac{\dot{\e}(t)}{2} \| x(t) \|^2- \frac{\alpha}{t^q} \| \dot{x}(t) \|^2.
\end{align*}
Now,  by using the fact that $\n g$ is monotone and $\dot{\e}(t)\le 0$ we get
\begin{align*}
\dot{W}(t) &=  -\frac{1}{\b(t)}\langle \n g(x(t))-\nabla g(x(t) + \beta(t) \dot{x}(t)),x(t)-( x(t) + \beta(t)\dot{x}(t)) \rangle + \frac{\dot{\e}(t)}{2} \| x(t) \|^2- \frac{\alpha}{t^q} \| \dot{x}(t) \|^2\\
&\le 0.
\end{align*}
Consequently, $W$ is non-increasing on $[t_0, +\infty[$, hence by the fact that $g$ is bounded from below we get $ \| \dot{x}(t) \| < + \infty$ for all $t\ge t_0.$

Let $\|\dot{x}_{\infty}\|=\sup_{t\in [t_0, T_{\max}[}\|\dot{x}(t)\|$
and assume that $T_{\max}<+\infty.$ Since $\|x(t)-x(t')\|\le\|\dot{x}_{\infty}\||t-t'|$, we get that $\lim_{t\to T_{\max}}x(t):= x_{\infty}\in \mathcal{H}$. By \eqref{DynSys} the map $\ddot{x}$ is also bounded on the interval $[t_0, T_{\max}[$ and under the same argument as before
$\lim_{t\to T_{\max}}\dot{x}(t):= \dot{x}_{\infty}$ exists. Applying the local existence theorem with initial data $(x_{\infty},\dot{x}_{\infty})$, we can extend the maximal solution to a strictly larger interval, a clear contradiction. Hence $T_{\max}=+\infty,$ which completes the proof.
\end{proof}

\section{Weak convergence for the trajectories of the regularized dynamical system \eqref{DynSys} and fast convergence for the function values in a generated trajectory }

In this section we show that the function values in a trajectory generated by the dynamical system \eqref{DynSys} converge to the global minimum of the objective function. Further, we obtain  some pointwise and integral estimates concerning the function values in a  trajectory generated by the dynamical system \eqref{DynSys}, but also for the gradient  and velocity.
Finally, we give sufficient conditions on the Tikhonov regularization parameter in order to show that the trajectory $x(t)$ generated by the dynamical system \eqref{DynSys} converges weakly to a minimizer of our objective function $g.$

In this section we consider the following weak assumption:
\begin{align}
  &\text{There exist }0< K \text{ such that }  \frac{q(1-q)}{t^2}\le \e(t) \le \frac{K}{t^{q}},\mbox{ for }t\mbox{ large enough.} \label{C0}\tag{C0}
\end{align}

Our first general result is the following.

\begin{theorem}\label{GeneralConvergenceResult}
For some starting points $u_0, \, v_0\in\mathcal{H}$ let $x : [t_0, \infty) \to \mathcal{H}$ be the unique global solution of  \eqref{DynSys}.
 If the Tikhonov regularization parameter $\e(t)$  satisfies \eqref{C0} and $\int_{t_0}^{+\infty} \frac{\e(t)}{t^q} dt < + \infty$ then
$\lim\limits_{t\to+\infty} g\left(x(t) + \b(t)\dot{x}(t)\right)=\min g.$
In addition, one has that
$\lim\limits_{t\to+\infty} \|\dot{x}(t)\|=0,$
therefore
$\lim\limits_{t\to+\infty} g\left(x(t)\right)=\min g.$
\end{theorem}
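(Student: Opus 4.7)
The strategy is a Lyapunov analysis. Two functionals come into play: the non-increasing energy
\[
W(t)=g(x(t))+\tfrac{1}{2}\|\dot x(t)\|^{2}+\tfrac{\e(t)}{2}\|x(t)\|^{2}
\]
of Theorem~\ref{exuniq}, which provides a priori bounds, and a refined functional $\mathcal E$ tailored to the shifted point $y(t):=x(t)+\b(t)\dot x(t)$ that makes $\n g(y(\cdot))$ appear explicitly in the dissipation.

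First I would harvest the consequences of $\dot W(t)\le-\tfrac{\a}{t^{q}}\|\dot x(t)\|^{2}+\tfrac{\dot\e(t)}{2}\|x(t)\|^{2}\le 0$ together with $W\ge\min g$: the energy admits a finite limit $L\ge\min g$, $\|\dot x\|$ is bounded on $[t_{0},+\infty[$, and $\int_{t_{0}}^{+\infty}\|\dot x(t)\|^{2}/t^{q}\,dt<+\infty$. Moreover, $\e$ non-increasing together with $\int\e/t^{q}<+\infty$ forces $\e(t)\to 0$. Boundedness of the trajectory $x(\cdot)$ is obtained separately by testing the ODE against $x(t)-x^{*}$ for $x^{*}\in\argmin g$: using the monotonicity inequality $\langle\n g(y(t)),y(t)-x^{*}\rangle\ge 0$ and Young's inequality to absorb the Tikhonov cross-term $\e(t)\langle x^{*},x(t)\rangle$, a Gronwall-type argument controls $\tfrac{1}{2}\|x(t)-x^{*}\|^{2}$; combined with $\e(t)\to 0$, this gives $\e(t)\|x(t)\|^{2}\to 0$.

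The core step is the construction of a refined Lyapunov functional of the form
\[
\mathcal E(t)=g(y(t))-\min g+\tfrac{1}{2}\|\dot x(t)\|^{2}+\tfrac{\e(t)}{2}\|x(t)\|^{2}+(\text{small correction}),
\]
computing $\dot{\mathcal E}$ via $\dot y=(1+\dot\b(t))\dot x+\b(t)\ddot x$ and the ODE, and absorbing the sign-indefinite Tikhonov cross terms by Young's inequality. The upper bound $\e(t)\le K/t^{q}$ from \eqref{C0} and the boundedness of $x(\cdot)$ allow one to dominate residuals of the form $\e(t)^{2}\|x(t)\|^{2}$ by a constant times $\e(t)/t^{q}$, while the lower bound $\e(t)\ge q(1-q)/t^{2}$ is precisely what is needed to dominate $\dot\e(t)\|x(t)\|^{2}$ by the same quantity. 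The target differential inequality is
\[
\dot{\mathcal E}(t)+c_{1}\b(t)\|\n g(y(t))\|^{2}+\tfrac{c_{2}}{t^{q}}\|\dot x(t)\|^{2}\le c_{3}\,\tfrac{\e(t)}{t^{q}},
\]
whose integration, combined with $\int_{t_{0}}^{+\infty}\e(t)/t^{q}\,dt<+\infty$, shows that $\mathcal E$ has a finite limit and that $\int_{t_{0}}^{+\infty}\b(t)\|\n g(y(t))\|^{2}\,dt<+\infty$.

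To pass from integral to pointwise information, observe that in both admissible regimes one has $\int_{t_{0}}^{+\infty}\b(t)\,dt=+\infty$ (either $\b(t)\to\g>0$, or $\g=0$ and $\b(t)=\b/t^{q}$ with $q<1$), so the gradient integrability forces $\liminf_{t\to+\infty}\|\n g(y(t))\|=0$; the convexity inequality $0\le g(y(t))-\min g\le\|\n g(y(t))\|\cdot\|y(t)-x^{*}\|$ with $y(\cdot)$ bounded yields $\liminf(g(y(t))-\min g)=0$. Similarly, $q<1$ and $\int\|\dot x(t)\|^{2}/t^{q}\,dt<+\infty$ give $\liminf_{t\to+\infty}\|\dot x(t)\|=0$. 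To upgrade these liminfs to genuine limits, I would pick a sequence $t_{n}\to+\infty$ along which $\|\dot x(t_{n})\|\to 0$; the local Lipschitz estimate $|g(x(t_{n}))-g(y(t_{n}))|\le C\b(t_{n})\|\dot x(t_{n})\|\to 0$ together with $W(t_{n})\to L$ yields $g(x(t_{n}))\to L$ and $g(y(t_{n}))\to L$, so matching with $\liminf(g(y)-\min g)=0$ identifies $L=\min g$. Then $W(t)-\min g=[g(x(t))-\min g]+\tfrac{1}{2}\|\dot x(t)\|^{2}+\tfrac{\e(t)}{2}\|x(t)\|^{2}$, being a sum of three non-negative terms that tends to $0$, forces $g(x(t))\to\min g$ and $\|\dot x(t)\|\to 0$. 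Finally, $g(y(t))\to\min g$ follows from $y(t)-x(t)=\b(t)\dot x(t)\to 0$ and the Lipschitzness of $g$ on the bounded range of the trajectory.

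The main obstacle I anticipate is twofold: first, the precise choice of the correction term in $\mathcal E$ so that every Tikhonov-induced cross term is absorbed into the dissipation up to a remainder integrable at infinity, namely $\e(t)/t^{q}$; and second, the sharp identification of the limit $L$ of $W$ as $\min g$, which requires carefully combining the subsequential information carried by the integrability of $\b(t)\|\n g(y(t))\|^{2}$ and $\|\dot x(t)\|^{2}/t^{q}$ with the global convergence of $W$.
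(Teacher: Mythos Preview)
Your approach differs substantially from the paper's, and while the overall Lyapunov strategy is sound, there are two genuine gaps that your outline does not resolve.

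\textbf{First gap: boundedness of the trajectory.} Your refined inequality $\dot{\mathcal E}\le c_3\,\e(t)/t^{q}$ and the subsequent Lipschitz estimates all rely on $x(\cdot)$ being bounded, which you propose to obtain beforehand by testing the ODE against $x(t)-x^{*}$. But with the implicit Hessian damping, the anchor calculation produces the extra term $\b(t)\langle\nabla g(y(t)),\dot x(t)\rangle$ (coming from $\langle\nabla g(y),x-x^{*}\rangle=\langle\nabla g(y),y-x^{*}\rangle-\b(t)\langle\nabla g(y),\dot x\rangle$), and you have no a~priori control on $\nabla g(y(\cdot))$ at this stage. A bare Gronwall argument does not close. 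Under the weak hypothesis $\int\e/t^{q}<+\infty$ (as opposed to $\int t^{q}\e<+\infty$), boundedness of $x$ is in fact \emph{not} established in the paper either; the paper only obtains $\|x(t)-x^{*}\|/t^{q}\to 0$. Relatedly, your explanation of the role of the lower bound $\e(t)\ge q(1-q)/t^{2}$ is off: since $\dot\e\le 0$, the term $\dot\e\|x\|^{2}$ already has a favourable sign. In the paper this lower bound is used to kill a term $bq(1-q)t^{q-2}\|x-x^{*}\|^{2}$ that arises only because of the anchor block in the paper's Lyapunov functional; your unscaled $\mathcal E$ has no such term.

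\textbf{Second gap: upgrading liminf to a limit.} You pick $t_{n}$ with $\|\dot x(t_{n})\|\to 0$, deduce $g(y(t_{n}))\to L$, and then ``match'' with $\liminf(g(y)-\min g)=0$ to conclude $L=\min g$. But the latter liminf is realised along a \emph{different} subsequence in general, so nothing forces $L=\min g$. A fix would require a \emph{common} subsequence along which both $\|\dot x\|$ and $\|\nabla g(y)\|$ vanish; this can be arranged from the joint integrability, but you did not argue it, and even then the boundedness issue above re-enters when invoking convexity or Lipschitz bounds.

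\textbf{How the paper proceeds instead.} The paper bypasses both problems by using a \emph{scaled} Lyapunov functional
\[
\mathcal E(t)=t^{2q}\bigl(g(y(t))-g^{*}\bigr)+\tfrac{t^{2q}\e(t)}{2}\|x(t)\|^{2}+\tfrac12\|b(x(t)-x^{*})+t^{q}\dot x(t)\|^{2}+\tfrac{b(\alpha-qt^{q-1}-b)}{2}\|x(t)-x^{*}\|^{2},
\]
with $0<b<\alpha$, which already contains the anchor. Via the strong convexity of $g_{t}=g+\tfrac{\e}{2}\|\cdot\|^{2}$ and the monotonicity trick $-\b\e\langle\nabla g(y),x\rangle\le \b^{2}\e\langle\nabla g(y),\dot x\rangle-\b\e\langle\nabla g(y),x^{*}\rangle$, the derivative is bounded by $r_{3}t^{q}\e(t)\|x^{*}\|^{2}$ --- a right-hand side \emph{independent of $x(t)$}. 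After integrating and dividing by $t^{2q}$, Lemma~\ref{nullimit} (applied with $\varphi(s)=s^{2q}$ and $f(s)=\e(s)/s^{q}\in L^{1}$) gives directly $g(y(t))-g^{*}\to 0$, $\|(x(t)-x^{*})/t^{q}\|\to 0$ and $\|b(x(t)-x^{*})/t^{q}+\dot x(t)\|\to 0$, hence $\|\dot x(t)\|\to 0$, with no boundedness assumption and no liminf/subsequence matching.
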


\begin{proof}
Consider $x^\ast \in \argmin g$ and denote $g^\ast = g(x^\ast)=\min g$. Consider $b \in ]0, \a[$ and  $a(t)=t^{2q},\,t\ge t_0.$ For simplicity we denote $\b(t)=\g+\frac{\b}{t^q}$  We introduce the  energy functional $\mathcal{E} : [t_0, \infty) \to \mathbb{R},$
\begin{align}\label{Lyapunov}
\mathcal{E}(t) = &a(t) \left( g(x(t)+\beta(t)\dot{x}(t)) - g^\ast \right) +\frac{t^{2q}\e(t)}{2}\|x(t)\|^2+ \frac{1}{2} \| b(x(t)-x^\ast) + t^q \dot{x}(t) \|^2\\
\nonumber& + \frac{b(\alpha-qt^{q-1}-b)}{2} \| x(t) - x^\ast \|^2.
\end{align}
From the choice of $b$, there exists a $t_1\ge t_0$ such that the coefficient $\a - qt^{q-1} - b > 0$ for all $t\ge t_1$, hence the Lyapunov functional $\mathcal{E}(t)\ge 0$  for all $t\ge t_1.$

By taking the time derivative of $\mathcal{E}$ we obtain
\begin{align}\label{engderiv1}
\dot{\mathcal{E}}(t) &=a'(t) \left( g(x(t)+\beta(t)\dot{x}(t)) - g^\ast \right) + a(t) \<\n g(x(t)+\beta(t)\dot{x}(t)),\b(t)\ddot{x}(t)+(\b'(t)+1)\dot{x}(t)\>\\
\nonumber&+\left(qt^{2q-1}\e(t)+\frac{t^{2q}\dot{\e}(t)}{2}\right)\|x(t)\|^2+t^{2q}\e(t)\<\dot{x}(t),x(t)\>+\<(b+qt^{q-1})\dot{x}(t) +t^q\ddot{x}(t),b(x(t)-x^\ast)+t^q \dot{x}(t)\>\\
\nonumber& + b(\alpha-qt^{q-1}-b) \<\dot{x}(t), x(t) - x^\ast\> + \frac{bq(1-q)t^{q-2}}{2} \| x(t) - x^\ast \|^2.
\end{align}
From the dynamical system (\ref{DynSys}), we have that
$
\ddot{x}(t) = - \e(t) x(t)- \frac{\alpha}{t^q} \dot{x}(t) - \nabla g(x(t) + \beta(t) \dot{x}(t)).
$
Hence,
\begin{align}\label{forenergy1}
&a(t) \<\n g(x(t)+\beta(t)\dot{x}(t)),\b(t)\ddot{x}(t)+(\b'(t)+1)\dot{x}(t)\>=\\
\nonumber& a(t)\left\<\n g(x(t)+\b(t)\dot{x}(t)), -\b(t)\e(t)x(t)+\left(-\b(t)\frac{\a}{t^q}+\b'(t)+1\right)\dot{x}(t))-\b(t)\nabla g(x(t) + \beta(t) \dot{x}(t))\right\>=\\
\nonumber& -\b(t) a(t)\left\|\n g(x(t)+\b(t)\dot{x}(t))\right\|^2+\left(-\b(t)\frac{\a}{t^q}+\b'(t)+1\right)a(t)\<\n g(x(t)+\b(t)\dot{x}(t)),\dot{x}(t)\>\\
\nonumber&-\b(t)\e(t)a(t)\<\n g(x(t)+\b(t)\dot{x}(t)),x(t)\>.
\end{align}

Further,
\begin{align}\label{forenergy2}
&\<(b+qt^{q-1})\dot{x}(t) +t^q\ddot{x}(t),b(x(t)-x^\ast)+t^q \dot{x}(t)\>=\\
\nonumber&\<(b+qt^{q-1}-\a)\dot{x}(t) -t^q\e(t)x(t)-t^q\nabla g(x(t) + \beta(t) \dot{x}(t)), b(x(t)-x^\ast)+ t^q\dot{x}(t)\>=\\
\nonumber& b(b+qt^{q-1}-\a)\<\dot{x}(t),x(t)-x^\ast\>+(b+qt^{q-1}-\a)t^q\|\dot{x}(t)\|^2-bt^q\e(t)\<x(t),x(t)-x^\ast\>-t^{2q}\e(t)\<\dot{x}(t),x(t)\>\\
\nonumber& -b t^q\<\n g(x(t) + \b(t) \dot{x}(t)), x(t)-x^\ast\>-t^{2q}\<\nabla g(x(t) + \beta(t) \dot{x}(t)), \dot{x}(t)\>.
\end{align}

Combining \eqref{engderiv1}, \eqref{forenergy1} and \eqref{forenergy2} we get

\begin{align}\label{engderiv2}
\dot{\mathcal{E}}(t) &= a'(t) \left( g(x(t)+\beta(t)\dot{x}(t)) - g^\ast \right) -\b(t) a(t)\left\|\n g(x(t)+\b(t)\dot{x}(t))\right\|^2+(b+qt^{q-1}-\a)t^q\|\dot{x}(t)\|^2\\
 \nonumber& +\left(qt^{2q-1}\e(t)+\frac{t^{2q}\dot{\e}(t)}{2}\right)\|x(t)\|^2+\left(\left(-\b(t)\frac{\a}{t^q}+\b'(t)+1\right)a(t)-t^{2q}\right)\<\n g(x(t)+\b(t)\dot{x}(t)),\dot{x}(t)\>\\
\nonumber&-\b(t)\e(t)a(t)\<\n g(x(t)+\b(t)\dot{x}(t)),x(t)\>-b t^q\<\n g(x(t) + \b(t) \dot{x}(t))+\e(t)x(t), x(t)-x^\ast\> \\
\nonumber& + \frac{bq(1-q)t^{q-2}}{2} \| x(t) - x^\ast \|^2.
\end{align}

Consider now the strongly convex function
$g_t:\mathcal{H}\To\R,\,g_t(x)=g(x)+\frac{\e(t)}{2}\|x\|^2.$
From the gradient inequality we have
$g_t(y)-g_t(x)\ge\<\n g_t(x),y-x\>+\frac{\e(t)}{2}\|x-y\|^2,\mbox{ for all }x,y\in\mathcal{H}.$
Take now $y=x^\ast$ and $x=x(t)+\b(t)\dot{x}(t).$ We get
\begin{align*}
&g(x^\ast)+\frac{\e(t)}{2}\|x^\ast\|^2-g(x(t)+\b(t)\dot{x}(t))-\frac{\e(t)}{2}\|x(t)+\b(t)\dot{x}(t)\|^2\ge\\
&-\<\n g(x(t)+\b(t)\dot{x}(t))+\e(t)(x(t)+\b(t)\dot{x}(t)),x(t)+\b(t)\dot{x}(t)-x^\ast\>+\frac{\e(t)}{2}\|x(t)+\b(t)\dot{x}(t)-x^\ast\|^2.
\end{align*}
Consequently,
\begin{align*}
&-\<\n g(x(t)+\b(t)\dot{x}(t))+\e(t)x(t),x(t)-x^\ast\>-\b(t)\<\n g(x(t)+\b(t)\dot{x}(t))+\e(t)x(t),\dot{x}(t)\>=\\
&-\<\n g(x(t)+\b(t)\dot{x}(t))+\e(t)x(t),x(t)+\b(t)\dot{x}(t)-x^\ast\>\le g(x^\ast)+\frac{\e(t)}{2}\|x^\ast\|^2-g(x(t)+\b(t)\dot{x}(t))\\
&-\frac{\e(t)}{2}\|x(t)+\b(t)\dot{x}(t)\|^2-\frac{\e(t)}{2}\|x(t)+\b(t)\dot{x}(t)-x^\ast\|^2+\b(t)\e(t)\<\dot{x}(t),x(t)+\b(t)\dot{x}(t)-x^\ast\>.
\end{align*}

From here we get
\begin{align}\label{forenergy3}
&-\<\n g(x(t)+\b(t)\dot{x}(t))+\e(t)x(t),x(t)-x^\ast\>\le\\
\nonumber& -(g(x(t)+\b(t)\dot{x}(t))-g(x^\ast))+\frac{\e(t)}{2}\|x^\ast\|^2+ \b(t)\<\n g(x(t)+\b(t)\dot{x}(t)),\dot{x}(t)\>\\
\nonumber&-\frac{\e(t)}{2}\|x(t)+\b(t)\dot{x}(t)\|^2-\frac{\e(t)}{2}\|x(t)+\b(t)\dot{x}(t)-x^\ast\|^2+\b(t)\e(t)\<\dot{x}(t),2 x(t)+\b(t)\dot{x}(t)-x^\ast\>.
\end{align}

Further, an easy computation shows that
\begin{align*}
&-\frac{\e(t)}{2}\|x(t)+\b(t)\dot{x}(t)\|^2-\frac{\e(t)}{2}\|x(t)+\b(t)\dot{x}(t)-x^\ast\|^2+\b(t)\e(t)\<\dot{x}(t),2 x(t)+\b(t)\dot{x}(t)-x^\ast\>=\\
&-\frac{\e(t)}{2}\|x(t)\|^2-\frac{\e(t)}{2}\|x(t)-x^\ast\|^2.
\end{align*}

Hence, \eqref{forenergy3} becomes
\begin{align}\label{forenergy4}
&-\<\n g(x(t)+\b(t)\dot{x}(t))+\e(t)x(t),x(t)-x^\ast\>\le -(g(x(t)+\b(t)\dot{x}(t))-g(x^\ast))-\frac{\e(t)}{2}\|x(t)\|^2\\
\nonumber&-\frac{\e(t)}{2}\|x(t)-x^\ast\|^2+\frac{\e(t)}{2}\|x^\ast\|^2+ \b(t)\<\n g(x(t)+\b(t)\dot{x}(t)),\dot{x}(t)\>.
\end{align}

By multiplying \eqref{forenergy4} with $b t^q$ and injecting in \eqref{engderiv2} we get

\begin{align}\label{engderiv3}
\dot{\mathcal{E}}(t) \le& (a'(t)-bt^q) \left( g(x(t)+\beta(t)\dot{x}(t)) - g^\ast \right) -\b(t) a(t)\left\|\n g(x(t)+\b(t)\dot{x}(t))\right\|^2+(b+qt^{q-1}-\a)t^q\|\dot{x}(t)\|^2\\
\nonumber&+bt^q\frac{\e(t)}{2}\|x^\ast\|^2 +\left(\frac{t^{2q}\dot{\e}(t)}{2}+(2qt^{q-1}-b)\frac{t^q\e(t)}{2}\right)\|x(t)\|^2\\
\nonumber&-\b(t)\e(t)a(t)\<\n g(x(t)+\b(t)\dot{x}(t)),x(t)\> + \frac{bq(1-q)t^{q-2} - bt^q \e(t)}{2} \| x(t) - x^\ast \|^2
\\
 \nonumber&+\left(\left(-\b(t)\frac{\a}{t^q}+\b'(t)+1\right)a(t)-t^{2q}+b\b(t)t^q\right)\<\n g(x(t)+\b(t)\dot{x}(t)),\dot{x}(t)\>.
\end{align}

Further, using the monotonicity of $\n g$ and the fact that $\n g(x^\ast)=0$ we get for all $t\ge t_1$ that
\begin{align}\label{temp51}
-\b(t)\e(t)a(t)\<\n g(x(t)+\b(t)\dot{x}(t)),x(t)\>&=-\b(t)\e(t)a(t)\<\n g(x(t)+\b(t)\dot{x}(t)),x(t)+\b(t)\dot{x}(t)-x^\ast\>\\
\nonumber&+\b(t)\e(t)a(t)\<\n g(x(t)+\b(t)\dot{x}(t)),\b(t)\dot{x}(t)-x^\ast\>\\
\nonumber&\le \b^2(t)\e(t)a(t)\<\n g(x(t)+\b(t)\dot{x}(t)),\dot{x}(t)\>\\
\nonumber&-\b(t)\e(t)a(t)\<\n g(x(t)+\b(t)\dot{x}(t)),x^\ast\>.
\end{align}

Consequently, by injecting \eqref{temp51} in \eqref{engderiv3}  and taking into account that $a(t)=t^{2q}$, it remains to estimate
$$\left(\left(-\b(t)\frac{\a}{t^q}+\b'(t)+\b^2(t)\e(t)\right)a(t)+b\b(t)t^q\right)\<\n g(x(t)+\b(t)\dot{x}(t)),\dot{x}(t)\>$$
and
$$-\b(t)\e(t)a(t)\<\n g(x(t)+\b(t)\dot{x}(t)),x^\ast\>.$$

Now, since $\b(t)>0$, for all $t\ge t_0$ we get that  for all $t\ge  t_1$ and for all $P,R>0$ one has
\begin{align}\label{tempy}
&\left(\left(-\b(t)\frac{\a}{t^q}+\b'(t)+\b^2(t)\e(t)\right)a(t)+b\b(t)t^q\right)\<\n g(x(t)+\b(t)\dot{x}(t)),\dot{x}(t)\>\le\\
\nonumber& (\b(t)(\a-b)t^q+|\b'(t)|t^{2q}+\b^2(t)\e(t)t^{2q})\left(\frac{\|\n g(x(t)+\b(t)\dot{x}(t))\|^2}{2P}+\frac{P\|\dot{x}(t)\|^2}{2}\right)
\end{align}
and
\begin{align}\label{tempy1}
&-\b(t)\e(t)a(t)\<\n g(x(t)+\b(t)\dot{x}(t)),x^\ast\>\le \b(t)\e(t)t^{2q}\left(\frac{Rt^q\|\n g(x(t)+\b(t)\dot{x}(t))\|^2}{2}+\frac{\|x^*\|^2}{2Rt^q}\right)
\end{align}

Hence, \eqref{engderiv3} leads to

\begin{align}\label{engderiv4}
\dot{\mathcal{E}}(t) \le& (2qt^{2q-1}-bt^q) \left( g(x(t)+\beta(t)\dot{x}(t)) - g^\ast \right)\\
 \nonumber&+\left((b+qt^{q-1}-\a)t^q+\frac{P(\b(t)(\a-b)t^q+|\b'(t)|t^{2q}+\b^2(t)\e(t)t^{2q})}{2}\right)\|\dot{x}(t)\|^2\\
\nonumber&+\left(-\b(t) t^{2q}+\frac{\b(t)(\a-b)t^q+|\b'(t)|t^{2q}+\b^2(t)\e(t)t^{2q}}{2P}+\frac{R\b(t)\e(t)t^{3q}}{2}\right)\left\|\n g(x(t)+\b(t)\dot{x}(t))\right\|^2\\
 \nonumber& +\left(\frac{t^{2q}\dot{\e}(t)}{2}+(2qt^{q-1}-b)\frac{t^q\e(t)}{2}\right)\|x(t)\|^2 + \frac{bq(1-q)t^{q-2} - bt^q \e(t)}{2} \| x(t) - x^\ast \|^2
\\
 \nonumber&+\left(\frac{\b(t)}{2R}+\frac{b}{2}\right)t^q\e(t)\|x^\ast\|^2\mbox{ for all }t\ge t_1,\,P,R>0.
\end{align}

Now, according to \eqref{C0}   there exist $K>0$  and $\ol t_1\ge t_1$ such that $\e(t)\le\frac{K}{t^q} \mbox{ for every }t\ge \ol t_1$ and taking into account that  $\b(t)=\g+\frac{\b}{t^q}$ with $\g>0,\,\b\in\R$ or $\g=0,\,\b>0$ and $q\in]0,1[$ we conclude the following. \\

There exist $t_1'\ge\ol t_1$ and $r_0>0$ such that
$$2qt^{2q-1}-bt^q\le-r_0 t^q\mbox{ for all }t\ge t_1'.$$

One can choose $P_0>0$ such that for some $t_1''\ge t_1'$ and $r_1>0$ one has
$$(b+qt^{q-1}-\a)t^q+\frac{P_0(\b(t)(\a-b)t^q+|\b'(t)|t^{2q}+\b^2(t)\e(t)t^{2q})}{2}\le-r_1t^q\mbox{ for all }t\ge t_1''.$$

If $\g>0,\,\b\in\R$ then one can choose $R_0>0$ such that for some $t_1'''\ge t_1''$ and $r_2>0$ one has
$$-\b(t) t^{2q}+\frac{\b(t)(\a-b)t^q+|\b'(t)|t^{2q}+\b^2(t)\e(t)t^{2q}}{2P_0}+\frac{R_0\b(t)\e(t)t^{3q}}{2}\le -r_2 t^{2q},\mbox{ for all }t\ge t_1'''.$$

If $\g=0,\b>0$ then one can choose $R_0>0$ such that for some $t_1'''\ge t_1''$ and $r_2>0$ one has
$$-\b(t) t^{2q}+\frac{\b(t)(\a-b)t^q+|\b'(t)|t^{2q}+\b^2(t)\e(t)t^{2q}}{2P_0}+\frac{R_0\b(t)\e(t)t^{3q}}{2}\le -r_2 t^{q},\mbox{ for all }t\ge t_1'''.$$

Finally, since $\e(t)$ is decreasing  we get
$$\frac{t^{2q}\dot{\e}(t)}{2} + (2qt^{q-1}-b)\frac{t^q\e(t)}{2} \le (2qt^{q-1}-b)\frac{t^q\e(t)}{2}\le-\frac{r_0}{2}t^q\e(t),\mbox{ for all }t\ge t_1'.$$

Further, using that $\e(t) \ge \frac{q(1-q)}{t^2}$ for $t$ big enough we obtain that there exists $t_2\ge t_1'''$ and $r_3>0$ such that
$\frac{bq(1-q)t^{q-2} - bt^q \e(t)}{2}\le 0$ and $\frac{\b(t)}{2R_0}+\frac{b}{2}<r_3$ for all $t\ge t_2.$

Hence, by denoting $r_2(t)=r_2 t^{2q}$ when $\g>0,\,\b\in\R$ and $r_2(t)=r_2t^{q}$ whenever $\g=0,\,\b>0,$  relation \eqref{engderiv4} leads to
\begin{align}\label{engderiv5}
\dot{\mathcal{E}}(t)& +r_0t^q\left( g(x(t)+\beta(t)\dot{x}(t)) - g^\ast \right)+r_1 t^q\|\dot{x}(t)\|^2+ r_2(t)\left\|\n g(x(t)+\b(t)\dot{x}(t))\right\|^2\\
\nonumber&+\frac{r_0}{2} t^q\e(t)\|x(t)\|^2\le r_3t^q\e(t)\|x^\ast\|^2,\mbox{ for all }t\ge  t_2.
\end{align}

 By integrating \eqref{engderiv5} on the interval $[t_2,t]$, we obtain for every $t\ge t_2$ that
\begin{align}\label{engfinal}
{\mathcal{E}}(t) &+ Cr_0\int_{t_2}^t s^q \left( g(x(s)+\beta(s)\dot{x}(s)) - g^\ast \right)ds+ r_1\int_{t_2}^t s^q\|\dot{x}(s)\|^2ds + \int_{t_2}^t r_2(s)\left\|\n g(x(s)+\b(s)\dot{x}(s))\right\|^2ds\\
\nonumber&+\frac{r_0}{2}\int_{t_2}^t s^q\e(s)\|x(s)\|^2ds\le \frac{r_3\|x^\ast\|^2}{2}\int_{t_2}^{t}  s^q\e(s) ds+\mathcal{E}(t_2).
\end{align}
Now if we assume that $\int_{t_0}^{+\infty}  \frac{\e(s)}{s^q} ds<+\infty,$ then from \eqref{Lyapunov} and from \eqref{engfinal} we get that for all $t\ge t_2$ one has
$$g(x(t)+\beta(t)\dot{x}(t))-\min g \le \frac{\mathcal{E}(t_2)}{t^{2q}} + \frac{r_3\|x^\ast\|^2}{2t^{2q}}\int_{t_2}^t s^q\e(s)ds,$$
$$\left \| \frac{b}{t^q}(x(t)-x^\ast) + \dot{x}(t) \right\|^2\le \frac{2\mathcal{E}(t_2)}{t^{2q}}+\frac{r_3 \|x^\ast\|^2 }{t^{2q}}\int_{t_2}^{t} s^q \e(s)ds$$
and
$$\frac{b(\alpha-qt^{q-1}-b)}{2}\left\|\frac{ x(t) - x^\ast}{t^q}\right\|^2\le \frac{\mathcal{E}(t_2)}{t^{2q}} + \frac{r_3\|x^\ast\|^2}{2t^{2q}}\int_{t_2}^t s^q\e(s)ds.$$

Obviously,
$\lim\limits_{t\to+\infty}\frac{\mathcal{E}(t_3)}{t^{2q}}=0.$
Further, Lemma \eqref{nullimit} applied to the functions $\varphi(s)=s^{2q}$ and $f(s)=\frac{\e(s)}{s^q}$ provides
$\lim\limits_{t\to+\infty}\frac{1}{t^{2q}}\int_{t_2}^t s^{2q}\frac{\e(s)}{s^q}dt=0.$ Hence,
$$\lim\limits_{t\to+\infty}g(x(t)+\beta(t)\dot{x}(t))=\min g,$$
$$\lim\limits_{t\to+\infty}\left \| \frac{b}{t^q}(x(t)-x^\ast) + \dot{x}(t) \right\|=0$$
and
$$\lim\limits_{t\to+\infty}\left\| \frac{x(t) - x^\ast}{t^q}\right\|=0.$$
Combining the last two relations we get
$\lim\limits_{t\to+\infty}\left \| \dot{x}(t) \right\|=0,$
and from here and the continuity of $g$ we have
$$\lim\limits_{t\to+\infty}g(x(t))=\lim\limits_{t\to+\infty}g(x(t)+\beta(t)\dot{x}(t))=\min g.$$
\end{proof}

\begin{remark}\label{remcond} In the hypotheses of Theorem \ref{GeneralConvergenceResult} we assumed that the Tikhonov regularization parameter $\e(t)$  satisfies \eqref{C0} and $\int_{t_0}^{+\infty} \frac{\e(t)}{t^q} dt < + \infty.$ Note that if $q>\frac12$ then \eqref{C0} implies that $\int_{t_0}^{+\infty} \frac{\e(t)}{t^q} dt < + \infty.$ 

Indeed, according to \eqref{C0} there exists $K>0$ and $t_1\ge t_0$ such that $\e(t)\le\frac{K}{t^q}$ for all $t\ge t_1.$ Hence,
$$\int_{t_0}^{+\infty} \frac{\e(t)}{t^q} dt =\int_{t_0}^{t_1} \frac{\e(t)}{t^q} dt +\int_{t_1}^{+\infty} \frac{\e(t)}{t^q} dt\le\int_{t_0}^{t_1} \frac{\e(t)}{t^q} dt +\int_{t_1}^{+\infty} \frac{K}{t^{2q}} dt < + \infty.$$

When we specify $\e(t)=\frac{a}{t^p},\,a,p>0$ then \eqref{C0} is nothing else that $q\le p\le 2$ and if $p=2$ then $a\ge q(1-q).$ In this case $\int_{t_0}^{+\infty} \frac{\e(t)}{t^q} dt < + \infty$ whenever $p>1-q.$ 

Hence, for $\max(q,1-q)<p\le 2$ and if $p=2$ then $a\ge q(1-q)$ then the conclusion of Theorem \ref{GeneralConvergenceResult} remains valid.
\end{remark}

In what follows, under some stronger conditions imposed on the Tikhonov regularization parameter, we obtain pointwise and integral estimates for the function values, velocity and gradient.

\begin{theorem}\label{RateConvergenceResult}
For some starting points $u_0, \, v_0\in\mathcal{H}$ let $x : [t_0, \infty) \to \mathcal{H}$ be the unique global solution of  \eqref{DynSys}. Suppose that  the condition \eqref{C0} holds and the Tikhonov regularization parameter satisfies $\int_{t_0}^{+\infty} t^q \e(t) dt < + \infty$. 
Then, the following integral and pointwise estimates are valid.

One has that $\int_{t_0}^{+\infty} t^q \|\dot{x}(t)\|^2dt<+\infty \text{ and } \int_{t_0}^{+\infty} t^q \left( g\left(x(t) + \b(t)\dot{x}(t)\right) - \min g\right)dt<+\infty.$ Further,

$\int_{t_0}^{+\infty} t^{2q}\left\|\n g\left(x(t) + \b(t)\dot{x}(t)\right)\right\|^2dt<+\infty,$ if $\g>0,\,\b\in\R$
and $ \int_{t_0}^{+\infty} t^{q} \left\|\n g\left(x(t) + \b(t)\dot{x}(t)\right)\right\|^2dt<+\infty,$ whenever  $\g=0,\,\b>0.$

Moreover,
 $\| \dot{x}(t) \|=o\left(\frac{1}{t^q}\right)\mbox{ as } t\to+\infty$,
$ g\left(x(t) + \b(t)\dot{x}(t)\right) - \min g =o\left(\frac{1}{t^{2q}}\right) \text{ as } t\to+\infty$
and
$\|x(t)\|=o\left(\frac{1}{t^q\sqrt{\e(t)}}\right) \mbox{ as } t\to+\infty.$
\end{theorem}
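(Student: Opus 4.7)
The proof starts from the differential inequality \eqref{engderiv5} already derived in the proof of Theorem \ref{GeneralConvergenceResult}, namely
\begin{align*}
\dot{\mathcal E}(t) &+ r_0 t^q\bigl(g(x(t)+\b(t)\dot x(t))-\min g\bigr) + r_1 t^q \|\dot x(t)\|^2 + r_2(t)\|\n g(x(t)+\b(t)\dot x(t))\|^2 \\
&+ \tfrac{r_0}{2}t^q\e(t)\|x(t)\|^2 \le r_3 t^q\e(t)\|x^*\|^2,
\end{align*}
valid for $t\ge t_2$, with $r_2(t)=r_2 t^{2q}$ when $\g>0$ and $r_2(t)=r_2 t^q$ when $\g=0,\,\b>0$. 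Under the strengthened hypothesis $\int_{t_0}^{+\infty}s^q\e(s)\,ds<+\infty$, I would integrate this inequality on $[t_2,t]$ and let $t\to+\infty$: the cumulated right-hand side stays uniformly bounded, and the four non-negative terms on the left yield \emph{exactly} the integral estimates claimed in the theorem (the gradient bound splits into the two subcases by the very definition of $r_2(t)$).

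From the same integration, $\mathcal E(t)\le \mathcal E(t_2)+\tfrac{r_3\|x^*\|^2}{2}\int_{t_2}^{+\infty} s^q\e(s)\,ds<+\infty$, so $\mathcal E$ is uniformly bounded. Moreover, setting $\mathcal F(t):=\mathcal E(t)-\tfrac{r_3\|x^*\|^2}{2}\int_{t_2}^t s^q\e(s)\,ds$, \eqref{engderiv5} shows $\dot{\mathcal F}(t)\le 0$, so $\mathcal F$ is non-increasing and bounded below and therefore converges; since the integral correction converges too, $\mathcal E(t)$ admits a finite limit $\ell\ge 0$. Dominating $\mathcal E(t)$ from below by each of its four non-negative components, and using the boundedness of $\|x(t)-x^*\|$ which follows from the term $\tfrac{b(\a-qt^{q-1}-b)}{2}\|x-x^*\|^2$, one obtains at once the $\mathcal O$-counterparts $g(x+\b\dot x)-\min g=\mathcal O(t^{-2q})$, $\|\dot x(t)\|=\mathcal O(t^{-q})$ and $\|x(t)\|=\mathcal O\bigl(t^{-q}\e(t)^{-1/2}\bigr)$.

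To upgrade each $\mathcal O$ to $o$, it suffices to show $\ell=0$; equivalently, since the three terms $u(t)=t^{2q}(g(x+\b\dot x)-\min g)$, $v(t)=\tfrac{t^{2q}\e(t)}{2}\|x(t)\|^2$ and $w(t)=\tfrac12\|b(x-x^*)+t^q\dot x\|^2$ are non-negative and their sum together with the fourth component converges to $\ell$, it is enough to prove individually that $u,v,w\to 0$. The plan is to differentiate each of them, replace $\ddot x$ via \eqref{DynSys}, and bound the resulting cross-terms by Cauchy--Schwarz using the integral bounds $\int t^q\|\dot x\|^2 dt$, $\int r_2(s)\|\n g\|^2 ds$, $\int t^q\e(t)\|x\|^2 dt$ and $\int t^q(g-\min g)\,dt$ just established, together with the pointwise $\mathcal O$ estimates and the boundedness of $\|\n g(x+\b\dot x)\|$ on bounded sets. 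Combined with the integrability $\int_{t_0}^{+\infty} u(t)/t^q\,dt<+\infty$ (and its analogues for $v,w$), and with the fact that $1/t^q$ is \emph{not} integrable at infinity, a standard ``$L^1$ plus integrable positive-part of derivative implies vanishing limit'' lemma forces $u,v,w\to 0$.

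The main obstacle is precisely this last step, because the cross-terms $t^{2q}\langle\n g,\dot x\rangle$ and $t^{2q}\e(t)\langle\n g,x\rangle$ that appear upon differentiation have no manifestly integrable bound: one has to weigh the quadratic gradient estimate appropriately (which is why the case $\g>0$ delivering $\int t^{2q}\|\n g\|^2 dt<+\infty$ differs from the case $\g=0,\b>0$ delivering only $\int t^q\|\n g\|^2 dt<+\infty$) and combine it with the already known $\mathcal O$-rates $\|\dot x(t)\|=\mathcal O(t^{-q})$ and $\|x(t)\|=\mathcal O(t^{-q}\e(t)^{-1/2})$. Once this delicate bookkeeping is in place, the three pointwise $o$-rates of the theorem follow at once from the vanishing of $u,v,w$, completing the argument.
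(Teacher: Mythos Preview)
Your derivation of the integral estimates and of the $\mathcal O$-rates is correct and matches the paper. The existence of $\lim_{t\to+\infty}\mathcal E(t)=\ell$ via the auxiliary function $\mathcal F$ is also fine (it is equivalent to invoking Lemma~\ref{fejer-cont1}, as the paper does).

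The genuine gap is in the passage to the little-$o$ rates. You assert that ``it suffices to show $\ell=0$'' and, equivalently, that $u,v,w\to 0$. This is wrong: $\ell$ is \emph{not} zero in general. The fourth summand $\tfrac{b(\a-qt^{q-1}-b)}{2}\|x(t)-x^\ast\|^2$ converges to $\tfrac{b(\a-b)}{2}\lim_{t\to\infty}\|x(t)-x^\ast\|^2$, and under the present hypotheses one only has weak convergence of $x(t)$, so this limit is typically positive. Likewise $w(t)=\tfrac12\|b(x(t)-x^\ast)+t^q\dot x(t)\|^2$ cannot be expected to tend to $0$; even once $t^q\dot x(t)\to 0$ is known, $w(t)\to \tfrac{b^2}{2}\lim\|x(t)-x^\ast\|^2$. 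Hence your target ``$u,v,w\to 0$'' is unattainable, and the subsequent programme of bounding $[\dot u]_+,[\dot v]_+,[\dot w]_+$ in $L^1$ (which you yourself flag as the main obstacle and leave unresolved) is aimed at the wrong conclusion.

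The paper circumvents this by first establishing, independently, that $\lim_{t\to+\infty}\|x(t)-x^\ast\|$ exists (this is the content of Theorem~\ref{wlimit}, proved via an anchor-function argument and Lemma~\ref{forwconv}). Once that limit is known, one subtracts the corresponding contributions from $\mathcal E(t)$ and isolates the existence of the limit of
\[
t^{2q}\bigl(g(x(t)+\b(t)\dot x(t))-g^\ast\bigr)+\tfrac{t^{2q}\e(t)}{2}\|x(t)\|^2+\tfrac12\|t^q\dot x(t)\|^2.
\]
Since this non-negative quantity divided by $t^q$ is integrable by the estimates \eqref{fordecay}, \eqref{forvelocity}, \eqref{fortraj} and $1/t^q\notin L^1$, its limit must be $0$, which yields all three $o$-rates simultaneously. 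In short: the missing ingredient in your argument is precisely the external input $\lim\|x(t)-x^\ast\|$ exists; without it the decomposition you propose cannot separate the vanishing part from the persistent $\|x-x^\ast\|^2$ contribution.
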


\begin{proof} First we evaluate the right hand side of \eqref{engfinal}. By using the assumption $\int_{t_0}^{+\infty}  s^q\e(s) ds<+\infty$ we conclude that $\frac{r_3}{2} \|x^\ast\|^2 \int_{t_2}^{+\infty}  t^q \e(t) dt+\mathcal{E}(t_2)<+\infty$, a hence \eqref{engfinal} leads to 
\begin{align}\label{engfinal1}
{\mathcal{E}}(t) &+ Cr_0\int_{t_2}^t s^q \left( g(x(s)+\beta(s)\dot{x}(s)) - g^\ast \right)ds+ r_1\int_{t_2}^t s^q\|\dot{x}(s)\|^2ds + \int_{t_2}^t r_2(s)\left\|\n g(x(s)+\b(s)\dot{x}(s))\right\|^2ds\\
\nonumber&+\frac{r_0}{2}\int_{t_2}^t s^q\e(s)\|x(s)\|^2ds\le C_0,
\end{align}
for some $C_0>0.$

Now, from \eqref{engfinal1} we immediately deduce that
${\mathcal{E}}(t)<+\infty$ for all $t\ge t_0.$ Hence, taking into account the definition of $\mathcal{E}$  we  obtain
$ g(x(t)+\beta(t)\dot{x}(t)) - g^* =\mathcal{O}\left(\frac{1}{t^{2q}}\right),\mbox{ as }t\to+\infty,$
$\sup_{t\ge t_0}\| b(x(t)-x^\ast) + t^q \dot{x}(t) \|^2<+\infty,$ and $ \sup_{t\ge t_0} \| x(t) - x^\ast \|^2<+\infty.$

The latter two relations give us at once $\|\dot{x}(t)\|=\mathcal{O}\left(\frac{1}{t^{2q}}\right),\mbox{ as }t\to+\infty$ and that the trajectory $x(t)$ is bounded.

Further, \eqref{engfinal1} yields the following integral estimates.
\begin{align}
& \int_{t_0}^{+\infty} t^q \left( g(x(t)+\beta(t)\dot{x}(t)) - g^\ast \right)dt<+\infty,\label{fordecay}\\
& \int_{t_0}^{+\infty} t^q\|\dot{x}(t)\|^2dt<+\infty, \label{forvelocity}\\
& \int_{t_0}^{+\infty} t^{2q}\left\|\n g(x(t)+\b(t)\dot{x}(t))\right\|^2dt<+\infty,\mbox{ whenever }\
\g>0,\,\b\in\R,\label{forgradient}\\
& \int_{t_0}^{+\infty} t^{q}\left\|\n g(x(t)+\b(t)\dot{x}(t))\right\|^2dt<+\infty,\mbox{ whenever }\
\g=0,\,\b> 0,\label{forgradient1}\\
& \int_{t_0}^{+\infty} t^q\e(t)\|x(t)\|^2dt<+\infty\label{fortraj}
\end{align}

In order to show that $ g(x(t)+\beta(t)\dot{x}(t)) - g^* =o\left(\frac{1}{t^{2q}}\right),\mbox{ as }t\to+\infty,$ and $\|\dot{x}(t)\|=o\left(\dfrac{1}{t^q}\right),\mbox{ as }t\to+\infty$ assume for now that the limit $\lim\limits_{t\to+\infty} \|x(t)-x^\ast\|$ exists, which shall be proved in Theorem \ref{wlimit}. Then, \eqref{engderiv5} provides that
$$\dot{\mathcal{E}}(t)\le \frac{r_3\|x^\ast\|^2}{2} t^q\e(t),\mbox{ for all }t\ge t_2.$$
 Obviously, by the hypotheses we have $\dfrac{r_3\|x^\ast\|^2}{2} t^q\e(t)\in L^1([t_2,+\infty[)$, hence, according to Lemma \eqref{fejer-cont1} there exists the limit
$\lim\limits_{t\to+\infty}\mathcal{E}(t).$

 Since the limit $\lim\limits_{t\to+\infty} \|x(t)-x^\ast\|$ exists we get that the limit
\begin{equation}\label{temp7}
\lim\limits_{t\to+\infty}t^{2q} \left( g(x(t)+\beta(t)\dot{x}(t)) - g^\ast \right) +\frac{t^{2q}\e(t)}{2}\|x(t)\|^2+ \frac{1}{2} \|t^q \dot{x}(t) \|^2
\end{equation}
also exists. \\
By combining \eqref{fordecay}, \eqref{forvelocity} and \eqref{fortraj} we find that
\begin{equation}\label{temp8}
\int_{t_0}^{+\infty}\frac{1}{t^q}\left(t^{2q} \left( g(x(t)+\beta(t)\dot{x}(t)) - g^\ast \right) +\frac{t^{2q}\e(t)}{2}\|x(t)\|^2+ \frac{1}{2} \|t^q \dot{x}(t) \|^2\right)dt<+\infty.
\end{equation}
Since $q < 1$, the function $t\To \frac{1}{t^q}\not\in L^1([t_0,+\infty[)$, therefore \eqref{temp7} and \eqref{temp8} lead to
\begin{equation}\label{temp9}
\lim\limits_{t\to+\infty}t^{2q} \left( g(x(t)+\beta(t)\dot{x}(t)) - g^\ast \right) +\frac{t^{2q}\e(t)}{2}\|x(t)\|^2+ \frac{1}{2} \|t^q \dot{x}(t) \|^2=0.
\end{equation}
Consequently,
$g(x(t)+\beta(t)\dot{x}(t)) -\min g=o\left(\frac{1}{t^{2q}}\right)\mbox{ as }t\to+\infty,$
$\|x(t)\|=o\left(\frac{1}{t^q\sqrt{\e(t)}}\right)\mbox{ as }t\to+\infty,$
and
$\|\dot{x}(t)\|=o\left(\frac{1}{t^q}\right)\mbox{ as }t\to+\infty.$
\end{proof}

In what follows we show that under the hypotheses of Theorem \ref{RateConvergenceResult}, the trajectories generated by the dynamical system \eqref{DynSys} converge weakly to a minimizer of the objective function $g.$

\begin{theorem}\label{wlimit}
For some starting points $u_0, \, v_0\in\mathcal{H}$ let $x : [t_0, \infty) \to \mathcal{H}$ be the unique global solution of  \eqref{DynSys}. Suppose that  the condition \eqref{C0} holds and the Tikhonov regularization parameter satisfies $\int_{t_0}^{+\infty} t^q \e(t) dt < + \infty$.

Then  the trajectory $x(t)$ converges weakly, as $t\to+\infty$, to an element of $\argmin g$. 
\end{theorem}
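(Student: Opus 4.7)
The plan is to invoke Opial's lemma in the Hilbert space $\mathcal{H}$: since the trajectory is bounded (Theorem \ref{RateConvergenceResult}), weak convergence of $x(t)$ to some element of $\argmin g$ will follow once we verify (a) every weak sequential cluster point of $x(t)$ belongs to $\argmin g$, and (b) $\lim_{t\to+\infty}\|x(t)-x^\ast\|$ exists for every $x^\ast\in\argmin g$. Property (a) is immediate from what is already proved: Theorem \ref{GeneralConvergenceResult} (whose hypothesis $\int\e/t^q<+\infty$ is implied by $\int t^q\e<+\infty$) gives $g(x(t))\to\min g$, so weak lower semicontinuity of the convex continuous function $g$ forces any weak cluster point $\bar x$ to satisfy $g(\bar x)\le\liminf g(x(t_n))=\min g$, hence $\bar x\in\argmin g$.

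To establish (b), fix $x^\ast\in\argmin g$, set $y(t):=x(t)+\beta(t)\dot x(t)$ and $h(t):=\tfrac12\|x(t)-x^\ast\|^2$. A direct computation using \eqref{DynSys} gives
\begin{equation*}
\ddot h(t)+\frac{\alpha}{t^q}\dot h(t)=\|\dot x(t)\|^2-\e(t)\langle x(t),x(t)-x^\ast\rangle-\langle\nabla g(y(t)),x(t)-x^\ast\rangle.
\end{equation*}
Decomposing $x-x^\ast=(y-x^\ast)-\beta(t)\dot x$ and invoking the gradient inequality $\langle\nabla g(y),y-x^\ast\rangle\ge g(y)-g(x^\ast)\ge 0$, the last term is dominated by $\beta(t)|\langle\nabla g(y),\dot x\rangle|$. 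Combined with the elementary bound $|\e(t)\langle x,x-x^\ast\rangle|\le C\e(t)$, a consequence of the boundedness of the trajectory from Theorem \ref{RateConvergenceResult}, we obtain
\begin{equation*}
\ddot h(t)+\frac{\alpha}{t^q}\dot h(t)\le\psi(t):=\|\dot x(t)\|^2+C\e(t)+\beta(t)\|\nabla g(y(t))\|\,\|\dot x(t)\|.
\end{equation*}

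The key integrability point is $\int_{t_0}^{+\infty}t^q\psi(t)\,dt<+\infty$: indeed $\int t^q\|\dot x\|^2<+\infty$ by Theorem \ref{RateConvergenceResult} and $\int t^q\e<+\infty$ by hypothesis, while for the cross-term we split into cases. When $\gamma>0$, $t^q\beta(t)$ is bounded and Cauchy--Schwarz gives $\int t^q\|\nabla g(y)\|\|\dot x\|\,dt\le(\int t^{2q}\|\nabla g(y)\|^2)^{1/2}(\int\|\dot x\|^2)^{1/2}<+\infty$, using the gradient estimate for $\gamma>0$ from Theorem \ref{RateConvergenceResult}. When $\gamma=0,\,\beta>0$, we have $t^q\beta(t)=\beta$ and Cauchy--Schwarz gives $\int\|\nabla g(y)\|\|\dot x\|\,dt\le(\int\|\nabla g(y)\|^2)^{1/2}(\int\|\dot x\|^2)^{1/2}<+\infty$, invoking the corresponding $\gamma=0$ gradient estimate. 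A standard manipulation---multiplying the differential inequality by the integrating factor $e^{\alpha\int_{t_0}^t s^{-q}\,ds}$ and applying the continuous Fej\'er Lemma \ref{fejer-cont1}---then yields $\int[\dot h]_+\,dt<+\infty$, so that $h(t)-\int_{t_0}^t[\dot h(s)]_+\,ds$ is non-increasing and bounded below, and $\lim h(t)$ exists. This is property (b), and Opial's lemma delivers the desired weak convergence.

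The main technical subtlety is the Hessian-type cross-term $\beta(t)\langle\nabla g(y),\dot x\rangle$: its integrability forces us to match the two regimes of $\beta(t)$ (whether $\gamma>0$, so $\beta(t)$ is bounded below away from $0$, or $\gamma=0$, so $\beta(t)=\beta/t^q$ provides an extra decaying factor) with the correspondingly different gradient integral estimates of Theorem \ref{RateConvergenceResult}. Once this pairing is in place, the rest is a routine Opial argument.
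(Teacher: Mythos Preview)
Your proof is correct and follows essentially the same Opial-lemma route as the paper: derive a second-order differential inequality for the anchor $h(t)=\tfrac12\|x(t)-x^\ast\|^2$, verify that $t^q$ times the right-hand side is integrable via the estimates of Theorem~\ref{RateConvergenceResult}, and conclude existence of $\lim_{t\to\infty}h(t)$ through an integrating-factor argument (this is precisely the content of Lemma~\ref{forwconv}, which is the reference the paper invokes here rather than Lemma~\ref{fejer-cont1}). Two minor slips worth cleaning up: when $\gamma>0$ it is $\beta(t)$ that is bounded, not $t^q\beta(t)$ (which grows like $\gamma t^q$), though your subsequent Cauchy--Schwarz bound only needs the former; and the paper handles the $\e(t)$-term via the identity $-\e(t)\langle x,x-x^\ast\rangle=\tfrac{\e(t)}{2}(\|x^\ast\|^2-\|x\|^2-\|x-x^\ast\|^2)\le\tfrac{\e(t)}{2}\|x^\ast\|^2$, which avoids your appeal to trajectory boundedness (harmless here, since that boundedness is established in Theorem~\ref{RateConvergenceResult} independently of the present theorem).
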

\begin{proof} We will use Opial's lemma (see Lemma \ref{Opial} from Appendix).
 In order to prove the existence of the weak limit of $x(t)$, we will use Lemma 16 from \cite{L-jde} (see Lemma \ref{forwconv} from Appendix) and Lemma 6 from \cite{L-jde}. For $x^\ast \in\argmin g$ let us introduce the anchor function
$w(t)=\frac12\|x(t)-x^\ast\|^2.$
The classical derivation chain rule leads to
$$\ddot{w}(t)+\frac{\a}{t^q}\dot{w}(t)=\left\<\ddot{x}(t)+\frac{\a}{t^q}\dot{x}(t),x(t)-x^\ast\right\>+\|\dot{x}(t)\|^2=\left\<-\n g(x(t)+\beta(t)\dot{x}(t))-\e(t){x}(t),x(t)-x^\ast\right\>+\|\dot{x}(t)\|^2.$$
We consider
$$ \< -\n g(x(t)+\beta(t)\dot{x}(t)), x(t) - x^\ast \> = \< -\n g(x(t)+\beta(t)\dot{x}(t)), x(t) + \b(t) \dot{x}(t) - x^\ast \> + \b(t) \< \n g(x(t)+\beta(t)\dot{x}(t)), \dot{x}(t) \>.$$
From the monotonicity of $\n g$ we have that
$$\< -\n g(x(t)+\beta(t)\dot{x}(t)), x(t) + \b(t) \dot{x}(t) - x^\ast \>\le 0,$$
from which it follows that
$$ \< -\n g(x(t)+\beta(t)\dot{x}(t)), x(t) - x^\ast \> \le \b(t) \< \n g(x(t)+\beta(t)\dot{x}(t)), \dot{x}(t) \> \le \frac{|\b(t)|}{2} \left( \| \n g(x(t)+\beta(t)\dot{x}(t)) \|^2 + \| \dot{x}(t) \|^2 \right). $$
Using that $|\b(t)| \le \g + \dfrac{|\b|}{t} \le \Tilde{K} = \g + \dfrac{|\b|}{t_0}$, it follows that
$$ \< -\n g(x(t)+\beta(t)\dot{x}(t)), x(t) - x^\ast \> \le \frac{\Tilde{K}}{2} \left( \| \n g(x(t)+\beta(t)\dot{x}(t)) \|^2 + \| \dot{x}(t) \|^2 \right).$$
By employing that
$$\left\<-\e(t)x(t),x(t)-x^\ast\right\>=\frac{\e(t)}{2}(\|x^\ast\|^2-\|x(t)\|^2-\|x(t)-x^\ast\|^2),$$
and using that $\e(t) \in \mathbb{R}_{+}$, it follows that
\begin{equation}\label{foruse}
\ddot{w}(t)+\frac{\a}{t^q}\dot{w}(t)\le \frac{\e(t)}{2}\|x^\ast\|^2 + \left( 1 + \frac{\Tilde{K}}{2} \right) \|\dot{x}(t)\|^2 + \dfrac{|\b(t)|}{2} \| \n g(x(t)+\beta(t)\dot{x}(t)) \|^2.
\end{equation}
In what follows, let us consider $k(t)=\frac{\e(t)}{2}\|x^\ast\|^2+\left( 1 + \frac{\Tilde{K}}{2} \right) \|\dot{x}(t)\|^2 + \dfrac{\Tilde{K}}{2} \| \n g(x(t)+\beta(t)\dot{x}(t)) \|^2$. Note that  by the hypotheses of the theorem  we have $t^q \frac{\e(t)}{2}\|x^\ast\|^2\in L^1[t_0,+\infty[$.
 Further, using the integral estimate \eqref{forvelocity} it follows that $t^q \| \dot{x}(t) \|^2 \in L^1[t_0,+\infty[$, hence $\left( 1 + \frac{\Tilde{K}}{2} \right) t^q \| \dot{x}(t) \|^2 \in L^1[t_0,+\infty[$.
Moreover, in one hand for the case when $\g=0, \b > 0$, the integral estimate \eqref{forgradient1} leads to $\b t^{q} \| \n g(x(t)+\beta(t)\dot{x}(t)) \|^2 \in L^1[t_0,+\infty[$. By taking into account that in this case $t^q|\b(t)|=\b $ we find that $\frac{|\b(t)|}{2} t^{q} \| \n g(x(t)+\beta(t)\dot{x}(t)) \|^2 \in L^1[t_0,+\infty[$.
On the other hand, when $\g >0, \b \in \mathbb{R}$, from \eqref{forgradient} we have that $t^{2q} \| \n g(x(t)+\beta(t)\dot{x}(t)) \|^2 \in L^1[t_0,+\infty[$, which evidently implies that $t^q \| \n g(x(t)+\beta(t)\dot{x}(t)) \|^2 \in L^1[t_0,+\infty[$, and  we finally obtain $\dfrac{|\b(t)|}{2} t^q \| \n g(x(t)+\beta(t)\dot{x}(t)) \|^2 \in L^1[t_0,+\infty[$.

 Hence, we get that $t^q k(t) \in L^1[t_0, +\infty[$. Now, by applying the same reasoning as in Lemma 6 from \cite{L-jde}, we obtain the existence of the limit $\lim\limits_{t \to +\infty} \| x(t) - x^\ast \|$. Now, if $\ol x \in  \mathcal{H}$ is  a weak sequential limit point of $x(t)$ then there exists a
sequence $(t_n)_{n\in \N} \subseteq  [t_0,+\infty[$ such that $\lim_{n\to+ \infty}  t_n = +\infty$  and $x(t_n)$ converges weakly
to $\ol x$ as $n\to+\infty.$
Obviously the function $g$ is weakly lower semicontinuous, since it is convex
and continuous, consequently $ g(\ol x) \leq  \liminf_{n\to+\infty}  g(x(t_n))=\lim_{n\to+\infty}  g(x(t_n))=g^*= \min g$,  which shows that $\ol x \in  \argmin g.$
According to the Opial lemma it follows that
$$w-\lim_{t\to+\infty}x(t) \in  \argmin g.$$
\end{proof}

\begin{remark}\label{rcond} Let us  consider $\e(t) = \frac{a}{t^p}$ where $a, \, p>0$. Then, as we have seen before, condition (C0) is satisfied whenever $2\ge p\ge q$, and when $p = 2$, then $a \ge q(1-q)$. Now, the condition $\int_{t_0}^{+\infty} t^q \e(t) dt < +\infty$ leads to $p > 1 + q$, thus for $p \in ]1+q, \, 2]$ the conclusions of Theorem \ref{RateConvergenceResult} and Theorem \ref{wlimit} hold. Note that this fact is in concordance with the results obtained in \cite{L-jde} and \cite{BCL-jma}. 
\end{remark}

Consequently the following result is valid.
\begin{corollary}\label{specweak} Consider $0<q<1$ and $\e(t)=\frac{a}{t^p},\,a>0$ and assume that $q+1< p\le 2.$ Assume further that for $p=2$ then $a \ge q(1-q)$. For some starting points $u_0, \, v_0\in\mathcal{H}$ let $x : [t_0, +\infty[ \to \mathcal{H}$ be the unique global solution of  \eqref{DynSys}. Then, the trajectory $x(t)$ converges weakly, as $t\to+\infty$, to an element of $\argmin g$. Further, one has $\| \dot{x}(t) \|=o\left(t^{-q}\right),$
$ g\left(x(t) + \b(t)\dot{x}(t)\right) - \min g =o\left(t^{-2q}\right) \text{ as } t\to+\infty$ and $\int_{t_0}^{+\infty} t^q \|\dot{x}(t)\|^2dt<+\infty,\, \int_{t_0}^{+\infty} t^q \left( g\left(x(t) + \b(t)\dot{x}(t)\right) - \min g\right)dt<+\infty.$ Concerning the gradient one has the integral estimates\\
$\int_{t_0}^{+\infty} t^{2q}\left\|\n g\left(x(t) + \b(t)\dot{x}(t)\right)\right\|^2dt<+\infty,$ if $\g>0,\,\b\in\R$
and
$ \int_{t_0}^{+\infty} t^{2q-1} \left\|\n g\left(x(t) + \b(t)\dot{x}(t)\right)\right\|^2dt<+\infty,$ if $\g=0,\,\b>0.$
\end{corollary}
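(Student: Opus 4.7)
The plan is to show that the specific Tikhonov parameter $\e(t)=a/t^p$ with $q+1<p\le 2$ (and $a\ge q(1-q)$ when $p=2$) falls inside the hypotheses of Theorem \ref{RateConvergenceResult} and Theorem \ref{wlimit}, so that the conclusions follow directly. There are only two conditions to verify, plus a small monotonicity check on the exponent of $t$ in the gradient integral for the case $\g=0$.

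First I would check condition \eqref{C0}. The upper bound $\e(t)\le K/t^q$ reduces to $a/t^p\le K/t^q$, i.e. $a\le K t^{p-q}$; since $p>q+1>q$, the right-hand side tends to $+\infty$, so the bound holds for $t$ large enough with any fixed $K>0$. For the lower bound $q(1-q)/t^2\le\e(t)$, I would separate two cases. If $p<2$, the inequality reads $q(1-q)\le a t^{2-p}$, and the right-hand side tends to $+\infty$, so it is true eventually. If $p=2$, it reduces to $q(1-q)\le a$, which is exactly the extra assumption made in the statement. Hence \eqref{C0} holds in all admissible cases.

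Next I would check the integrability condition $\int_{t_0}^{+\infty}t^q\e(t)\,dt<+\infty$. For $\e(t)=a/t^p$ this is $a\int_{t_0}^{+\infty}t^{q-p}\,dt$, which is finite precisely when $p-q>1$, i.e. $p>q+1$; this is exactly the lower bound on $p$ imposed in the statement. With both hypotheses checked, Theorem \ref{RateConvergenceResult} delivers the pointwise estimates $\|\dot{x}(t)\|=o(t^{-q})$ and $g(x(t)+\b(t)\dot{x}(t))-\min g=o(t^{-2q})$, together with the integral estimates $\int t^q\|\dot{x}(t)\|^2\,dt<+\infty$ and $\int t^q\bigl(g(x(t)+\b(t)\dot{x}(t))-\min g\bigr)dt<+\infty$, as well as the gradient bound $\int t^{2q}\|\n g(x(t)+\b(t)\dot{x}(t))\|^2\,dt<+\infty$ in the case $\g>0,\,\b\in\R$. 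Theorem \ref{wlimit} simultaneously provides the weak convergence of $x(t)$ to an element of $\argmin g$.

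It remains only to justify the gradient integral estimate in the case $\g=0,\,\b>0$: the corollary asserts $\int t^{2q-1}\|\n g\|^2\,dt<+\infty$, whereas Theorem \ref{RateConvergenceResult} yields the stronger $\int t^q\|\n g\|^2\,dt<+\infty$. Since $0<q<1$ implies $2q-1\le q$, we have $t^{2q-1}\le t^q$ for $t\ge 1$, hence the corollary's estimate follows immediately. No step presents a genuine obstacle here; the content is a bookkeeping translation of the general hypotheses into the concrete power-type regularization, and the only subtlety worth flagging is the boundary case $p=2$, where the lower bound in \eqref{C0} becomes an equality condition rather than an asymptotic one and forces the requirement $a\ge q(1-q)$.
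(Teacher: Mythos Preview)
Your proof is correct and follows essentially the same approach as the paper: the corollary is obtained by verifying that the power-type regularization $\e(t)=a/t^p$ with $q+1<p\le 2$ satisfies condition \eqref{C0} and the integrability condition $\int t^q\e(t)\,dt<+\infty$, and then invoking Theorems \ref{RateConvergenceResult} and \ref{wlimit} (this is precisely the content of Remark \ref{rcond}, which the paper uses as justification). Your additional remark that the $\g=0$ gradient estimate in the corollary is weaker than the one actually delivered by Theorem \ref{RateConvergenceResult} (exponent $2q-1$ versus $q$) is a correct observation that the paper does not spell out.
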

Next we show that even in case $p=q+1$ the generated trajectory $x$ is bounded and some pointwise estimates hold.

\begin{theorem}\label{pq1} Consider $0<q<1$ and let $\e(t)=\frac{a}{t^{q+1}},\,a>0.$ For some starting points $u_0, \, v_0\in\mathcal{H}$ let $x : [t_0, +\infty[ \to \mathcal{H}$ be the unique global solution of  \eqref{DynSys}. Then, the trajectory $x(t)$ is bounded and one has $\| \dot{x}(t) \|=\mathcal{O}\left(t^{-q}\right),$
$ g\left(x(t) + \b(t)\dot{x}(t)\right) - \min g =\mathcal{O}\left(t^{-2q}\right) \text{ as } t\to+\infty.$ 

\end{theorem}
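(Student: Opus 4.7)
The plan is to use the same Lyapunov function $\mathcal{E}(t)$ as in the proof of Theorem \ref{RateConvergenceResult} and to extract boundedness of $\mathcal{E}$ by a crossing-time contradiction argument. Since $\e(t)=a/t^{q+1}$ satisfies condition \eqref{C0} for $t$ large (Remark \ref{remcond}), the differential inequality \eqref{engderiv5} is available and, in this case, reads
\[
\dot{\mathcal{E}}(t)+r_0 t^q(g(x+\beta(t)\dot x)-g^\ast)+r_1 t^q\|\dot x\|^2+r_2(t)\|\nabla g(x+\beta(t)\dot x)\|^2+\frac{r_0 a}{2t}\|x(t)\|^2\le\frac{r_3 a\|x^\ast\|^2}{t},\qquad t\ge t_2,
\]
with every term on the left-hand side non-negative. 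A direct integration would yield only $\mathcal{E}(t)=\mathcal{O}(\ln t)$, which is too weak for the claimed $\mathcal{O}$-estimates.

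I would proceed by contradiction: assume $\mathcal{E}$ is unbounded. Since $\mathcal{E}\in C^1$, for each $M>\mathcal{E}(t_2)$ the crossing time $t_M:=\inf\{t\ge t_2:\mathcal{E}(t)\ge M\}$ is well defined, with $\mathcal{E}(t_M)=M$, $\dot{\mathcal{E}}(t_M)\ge 0$, and $t_M\to+\infty$ as $M\to+\infty$. Inserting $\dot{\mathcal{E}}(t_M)\ge 0$ into the inequality above and using non-negativity forces each summand on the left to be bounded by $r_3 a\|x^\ast\|^2/t_M$, which gives at $t_M$ the pointwise estimates
\[
g(x(t_M)+\beta(t_M)\dot x(t_M))-\min g\le\tfrac{C}{t_M^{q+1}},\qquad \|\dot x(t_M)\|^2\le\tfrac{C}{t_M^{q+1}},\qquad \|x(t_M)\|^2\le C,
\]
for some constant $C>0$ independent of $M$.

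Writing $\mathcal{E}=A_1+A_2+A_3+A_4$ with $A_1=t^{2q}(g-\min g)$, $A_2=\tfrac{t^{2q}\e(t)}{2}\|x\|^2$, $A_3=\tfrac12\|b(x-x^\ast)+t^q\dot x\|^2$ and $A_4=\tfrac{b(\alpha-qt^{q-1}-b)}{2}\|x-x^\ast\|^2$, the estimates above imply $A_1(t_M)\le Ct_M^{q-1}\to 0$, $A_2(t_M)\le C't_M^{q-1}\to 0$ (since $q<1$), while $A_3(t_M)\le b^2\|x(t_M)-x^\ast\|^2+t_M^{2q}\|\dot x(t_M)\|^2$ and $A_4(t_M)$ are uniformly bounded in $M$ (thanks to $\|x(t_M)\|\le\sqrt{C}$ and $t_M^{2q}\|\dot x(t_M)\|^2\le Ct_M^{q-1}\to 0$). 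Summing, $\mathcal{E}(t_M)$ is uniformly bounded in $M$, contradicting $\mathcal{E}(t_M)=M\to+\infty$. Therefore $\mathcal{E}(t)=\mathcal{O}(1)$, from which the three conclusions of the theorem follow at once: boundedness of $x(t)$ from $A_4\le\mathcal{E}$, $g(x+\beta(t)\dot x)-\min g=\mathcal{O}(t^{-2q})$ from $A_1\le\mathcal{E}$, and $\|\dot x(t)\|=\mathcal{O}(t^{-q})$ from $A_3\le\mathcal{E}$ combined with the boundedness of $\|x-x^\ast\|$.

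The main obstacle is precisely that a direct Gronwall-type closure on \eqref{engderiv5} fails in the critical case $p=q+1$, because the source $r_3 a\|x^\ast\|^2/t$ is not integrable; the key insight making the above argument work is that at any putative first peak of $\mathcal{E}$ the \emph{simultaneous} smallness of all good terms in \eqref{engderiv5} forces every component of $\mathcal{E}$ to stay small there, so that no peak can actually occur.
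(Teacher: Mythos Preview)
Your argument is correct and takes a genuinely different route from the paper's. The paper does not argue by contradiction; instead it manufactures a Gr\"onwall-type inequality by going back to the finer estimate \eqref{engderiv4} (specialised to $\e(t)=a/t^{q+1}$), adding $\frac{L}{t}\mathcal{E}(t)$ with a carefully chosen $0<L<\frac{a}{\alpha+b}$, and checking that every resulting coefficient on the right remains nonpositive. This yields $\dot{\mathcal{E}}(t)+\frac{L}{t}\mathcal{E}(t)\le \frac{M}{t}$, and after multiplying by the integrating factor $t^L$ and integrating one obtains the explicit bound $\mathcal{E}(T)\le \frac{M}{L}+o(1)$. The crucial step there is that the negative contribution $-\frac{ab}{2t}\|x(t)-x^\ast\|^2$ coming from $\e(t)=a/t^{q+1}$ is strong enough to absorb the extra $\frac{L}{t}$-portion of $\mathcal{E}$; this is exactly why the constraint $L<\frac{a}{\alpha+b}$ appears.

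Your crossing-time device bypasses this balancing: you stay with the coarser inequality \eqref{engderiv5} (where the signs are already favourable) and exploit the observation that at a first peak all of $g-\min g$, $\|\dot x\|^2$ and $\|x\|^2$ are forced to be small or bounded by the single source $r_3 a\|x^\ast\|^2/t_M$, so each piece of $\mathcal{E}(t_M)$ is uniformly controlled. This is more elementary---no integrating factor, no tuning of an auxiliary parameter---and it makes clearer why the critical scale $p=q+1$ still yields boundedness: one trades integrability of the source for a pointwise closure at the peak. The paper's approach, on the other hand, is constructive and delivers an explicit numerical ceiling for $\mathcal{E}$ in terms of $a,\alpha,b$ and $\|x^\ast\|$, which your contradiction argument does not directly provide.
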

\begin{proof} We consider the energy functional defined in the proof of Theorem \ref{GeneralConvergenceResult}. In this special case \eqref{engderiv4} reads as
\begin{align}\label{engderiv4pq1}
\dot{\mathcal{E}}(t) \le& (2qt^{2q-1}-bt^q) \left( g(x(t)+\beta(t)\dot{x}(t)) - g^\ast \right)\\
 \nonumber&+\left((b+qt^{q-1}-\a)t^q+\frac{P(\b(t)(\a-b)t^q+|\b'(t)|t^{2q}+a\b^2(t)t^{q-1})}{2}\right)\|\dot{x}(t)\|^2\\
\nonumber&+\left(-\b(t) t^{2q}+\frac{\b(t)(\a-b)t^q+|\b'(t)|t^{2q}+a\b^2(t)t^{q-1}}{2P}+\frac{aR\b(t)t^{2q-1}}{2}\right)\left\|\n g(x(t)+\b(t)\dot{x}(t))\right\|^2\\
 \nonumber& +\left(\frac{-a(q+1)t^{q-2}}{2}+(2qt^{q-1}-b)\frac{at^{-1}}{2}\right)\|x(t)\|^2 + \frac{bq(1-q)t^{q-2} -a bt^{-1}}{2} \| x(t) - x^\ast \|^2
\\
 \nonumber&+\left(\frac{\b(t)}{2R}+\frac{b}{2}\right)at^{-1}\|x^\ast\|^2\mbox{ for all }t\ge t_1,\,P,R>0.
\end{align}

Further,

\begin{align}\label{Lyapunovpq1}
\mathcal{E}(t) &= t^{2q} \left( g(x(t)+\beta(t)\dot{x}(t)) - g^\ast \right) +\frac{at^{q-1}}{2}\|x(t)\|^2+ \frac{1}{2} \| b(x(t)-x^\ast) + t^q \dot{x}(t) \|^2 + \frac{b(\alpha-qt^{q-1}-b)}{2} \| x(t) - x^\ast \|^2\\
\nonumber& \le t^{2q} \left( g(x(t)+\beta(t)\dot{x}(t)) - g^\ast \right) +\frac{at^{q-1}}{2}\|x(t)\|^2+ t^{2q} \| \dot{x}(t) \|^2 + \frac{b(\alpha-qt^{q-1}+b)}{2} \| x(t) - x^\ast \|^2,
\end{align}
for all $t\ge t_1.$
Let us consider now $L>0$ which will be specified later. Then, \eqref{engderiv4pq1} and \eqref{Lyapunovpq1} lead to

\begin{align}\label{engderiv5pq1}
\dot{\mathcal{E}}(t)+\frac{L}{t}\mathcal{E}(t) \le& ((2q+L)t^{2q-1}-bt^q) \left( g(x(t)+\beta(t)\dot{x}(t)) - g^\ast \right)\\
 \nonumber&+\left((b+(q+L)t^{q-1}-\a)t^q+\frac{P(\b(t)(\a-b)t^q+|\b'(t)|t^{2q}+a\b^2(t)t^{q-1})}{2}\right)\|\dot{x}(t)\|^2\\
\nonumber&+\left(-\b(t) t^{2q}+\frac{\b(t)(\a-b)t^q+|\b'(t)|t^{2q}+a\b^2(t)t^{q-1}}{2P}+\frac{aR\b(t)t^{2q-1}}{2}\right)\left\|\n g(x(t)+\b(t)\dot{x}(t))\right\|^2\\
 \nonumber& +\left(\frac{a(L+q-1)t^{q-2}}{2}-\frac{abt^{-1}}{2}\right)\|x(t)\|^2 \\
 \nonumber&+ \frac{bq(1-q-L)t^{q-2} +b((\a+b)L-a)t^{-1}}{2}\| x(t) - x^\ast \|^2
\\
 \nonumber&+\left(\frac{\b(t)}{2R}+\frac{b}{2}\right)at^{-1}\|x^\ast\|^2\mbox{ for all }t\ge t_1,\,P,R>0.
\end{align}

Now,  by taking into account that  $\b(t)=\g+\frac{\b}{t^q}$ with $\g>0,\,\b\in\R$ or $\g=0,\,\b>0$ and $q\in]0,1[$ we conclude the following. \\

There exist $t_1'\ge t_1$ and $r_0>0$ such that
$$(2q+L)t^{2q-1}-bt^q\le0\mbox{ for all }t\ge t_1'.$$

One can choose $P_0>0$ such that for some $t_1''\ge t_1'$  one has
$$(b+(q+L)t^{q-1}-\a)t^q+\frac{P_0(\b(t)(\a-b)t^q+|\b'(t)|t^{2q}+a\b^2(t)t^{q-1})}{2}\le0\mbox{ for all }t\ge t_1''.$$

One can choose $R_0>0$ such that for some $t_1'''\ge t_1''$  one has
$$-\b(t) t^{2q}+\frac{\b(t)(\a-b)t^q+|\b'(t)|t^{2q}+a\b^2(t)t^{q-1}}{2P_0}+\frac{aR_0\b(t)t^{2q-1}}{2}\le 0,\mbox{ for all }t\ge t_1'''.$$

Finally,  let $L<\frac{a}{\a+b}.$ Then there exists $t_2\ge t_1'''$ such that
$$\frac{a(L+q-1)t^{q-2}}{2}-\frac{abt^{-1}}{2}\le0,\mbox{ for all }t\ge t_2$$
and
$$bq(1-q-L)t^{q-2} +b((\a+b)L-a)t^{-1}\le0,\mbox{ for all }t\ge t_2.$$

Consequently, by denoting $\sup_{t\ge t_2}\left(\frac{\b(t)}{2R_0}+\frac{b}{2}\right)a\|x^\ast\|^2=M$, \eqref{engderiv5pq1} leads to

\begin{align}\label{engderiv6pq1}
\dot{\mathcal{E}}(t)+\frac{L}{t}\mathcal{E}(t) \le \frac{M}{t}\mbox{ for all }t\ge t_2.
\end{align}
We multiply \eqref{engderiv6pq1} with $t^L$ then integrate on an interval $[t_2,T]$ in order to obtain
$$\mathcal{E}(T)\le \frac{M}{L}+\frac{t_2^L\mathcal{E}(t_2)-\frac{M}{L}t_2^L}{T^L},$$
that is $\mathcal{E}(t)$ is bounded. Taking into account the definition of $\mathcal{E}(t)$ we obtain at once that
$x(t)$ is bounded, $g(x(t)+\beta(t)\dot{x}(t)) - g^\ast=\mathcal{O}(t^{-2q})$ and $\|\dot{x}(t)\|=\mathcal{O}(t^{-q})$ as $t\to+\infty.$
\end{proof}

\section{On the strong convergence of the trajectories generated by the dynamical system \eqref{DynSys}}

In the present section we will recall the idea of Tikhonov regularization, which is linked to the strong convergence results to a minimizer of the objective function of minimal norm regarding our dynamical system \eqref{DynSys}. Let us denote $x_{t}$ the unique solution of the strongly convex minimization problem $ \min_{x \in \mathcal{H}} g_t(x),$
where $g_t(x) =  g(x) + \frac{\e(t)}{2} \| x \|^2 $. From \cite{att-com1996} we know that the Tikhonov approximation curve $t \to x_{t}$ satisfies $x^\ast = \lim\limits_{t \to +\infty} x_{t}$, where $x^\ast = \argmin\limits_{x \in \argmin g} \| x \|$ is the minimal norm element from the set $\argmin g.$ Therefore, $x^\ast=\proj_{\argmin g} 0$ and $\| x_t \| \leq \| x^\ast \|$ (see \cite{BCL}). By employing the definition of the strongly convex function $g_t(x)$ we get that
\begin{equation}\label{fontos0general}
\n g_t(y)=\n g(y)+\e(t)y,
\end{equation}
and by taking into account the unique minimum denoted as $x_{t}$, it follows that
\begin{equation}\label{fontos0}
\n g_t(x_{t})=\n g(x_{t})+\e(t)x_{t}=0.
\end{equation}
As claimed in Lemma 2 from \cite{ABCR} the derivable almost everywhere function $t\To x_{t}$ is endowed with the property
\begin{equation}\label{fontos1}
\left\|\frac{d}{dt}x_{t}\right\| \le -\frac{\dot{\e}(t)}{\e(t)} \|x_{t}\| \mbox{ for almost every } t\ge t_0.
\end{equation}
Because $g_t$ is strongly convex, the gradient inequality leads to
\begin{equation}\label{fontos2}
g_t(y)-g_t(x)\ge\<\n g_t(x),y-x\>+\frac{\e(t)}{2}\|x-y\|^2,\mbox{ for all }x,y\in\mathcal{H}.
\end{equation}
In particular
\begin{equation}\label{fontos3}
g_t(x)-g_t(x_t)\ge\frac{\e(t)}{2}\|x-x_t\|^2,\mbox{ for all }x\in\mathcal{H}.
\end{equation}
Let $y:[t_0,+\infty[\to\mathcal{H}, s\To y(s)$ derivable at a point $t\in]t_0,+\infty[.$ Then, one has that
\begin{equation}\label{fontos4}
\frac{d}{dt}g_t(y(t))=\<\n g_t(y(t)),\dot{y}(t)\>
+\frac{\dot{\e}(t)}{2}\|y(t)\|^2.
\end{equation}
Finally, for all $x,y\in\mathcal{H}$, one has
\begin{equation}\label{fontos5}
g(x)-g(y)=(g_t(x)-g_t(x_t))+(g_t(x_t)-g_t(y))+\frac{\e(t)}{2}(\|y\|^2-\|x\|^2)\le g_t(x)-g_t(x_t)+\frac{\e(t)}{2}\|y\|^2.
\end{equation}

Now we are ready to prove the main result of the present section.

\begin{theorem}\label{StrongConvergenceResultLyapunov}
Consider $0 < q < 1$  and assume that the Tikhonov parametrization function satisfies the following conditions:
\begin{align}
  &\text{There exists } K_1 > 0 \text{ and }r \in (q, 1) \text{ such that } \e(t) \ge \dfrac{K_1}{t^{r+q}} \mbox{ for } t \mbox{ big enough}.\label{C1}\tag{C1} \\
  &\text{If }\g\neq 0 \text{ then } \e(t) \le \dfrac{\a}{2\g t^{q}} \mbox{ for } t \mbox{ big enough.}\label{C2}\tag{C2} \\
  &\text{There exists } K_3 > 0 \text{ such that }  \left( \dfrac{\dot{\e}(t)}{\e(t)} \right)^2 \le \dfrac{K_3}{t^2} \mbox{ for } t \mbox{ big enough}.\label{C3}\tag{C3}
\end{align}
 For some starting points $u_0, \, v_0\in\mathcal{H}$ let $x : [t_0, \infty) \to \mathcal{H}$ be the unique global solution of  \eqref{DynSys}. Then, the following statements are true.

(i) (convergence of the trajectory) $x(t)$ converges strongly, as $t \to+\infty$, to $x^*$, the element of minimum norm of $\argmin g$;

(ii) (fast convergence rates) $ g_t(x(t) + \b(t)\dot{x}(t) )-g_t(x_t)=\mathcal{O}(t^{2r-2q-2}+\e(t)t^{r-1}),$
$g(x(t) + \b(t)\dot{x}(t) )-\min g=\mathcal{O}(t^{2r-2q-2}+\e(t)),$
$\|\dot{x}(t) \|=\mathcal{O}(t^{r-q-1}+\sqrt{\e(t)}t^{\frac{r-1}{2}})$
and
$\|x(t)-x_t\|=\mathcal{O}(t^{r-1}+\sqrt{\e(t)}t^{\frac{2q+r-1}{2}})\mbox{ as }t\to+\infty;$

(iii) (integral estimates) If $t^{2q-1}\e(t)\in L^1[t_0,+\infty[$ then
$\int_{t_0}^{+\infty} t^q\left( g_t(x(t)+\b(t)\dot{x}(t))-g_t(x_t) \right)dt<+\infty$ and
$\int_{t_0}^{+\infty} t^q\|\dot{x}(t)\|^2 dt<+\infty.$ Further,
$\int_{t_0}^{+\infty}t^{2q}\| \n g(x(t) + \b(t) \dot{x}(t)) +\e(t)x(t)\|^2dt<+\infty,\mbox{ if }\g>0$
and
$\int_{t_0}^{+\infty}t^{2q-1}\| \n g(x(t) + \b(t) \dot{x}(t))\|^2dt<+\infty,\mbox{ if }\g=0.$

\end{theorem}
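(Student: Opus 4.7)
The plan is to mimic the strategy from the weak-convergence analysis (Theorem \ref{GeneralConvergenceResult} and Theorem \ref{RateConvergenceResult}) but with the Tikhonov approximation curve $t\mapsto x_t$ playing the role of the anchor point previously occupied by a fixed $x^*\in\argmin g$, and with the regularized energy $g_t(\cdot)-g_t(x_t)$ replacing $g(\cdot)-g^*$. Concretely, fix $b\in\,]0,\a[$ and consider a Lyapunov function of the form
\begin{align*}
\mathcal{E}(t) \;=\; t^{2q}\bigl(g_t(x(t)+\b(t)\dot x(t))-g_t(x_t)\bigr) \;+\; \tfrac12\bigl\|b(x(t)-x_t)+t^q\dot x(t)\bigr\|^2 \;+\; \tfrac{b(\a-qt^{q-1}-b)}{2}\|x(t)-x_t\|^2,
\end{align*}
possibly augmented by a term $\xi(t)\|x(t)-x_t\|^2$ with $\xi(t)$ comparable to $t^{2q}\e(t)$ so that the strong convexity of $g_t$ can be absorbed. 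By \eqref{fontos3} each piece of $\mathcal{E}$ is non-negative for $t$ large, hence $\mathcal{E}\ge 0$.

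The first step is to differentiate $\mathcal{E}$ along a trajectory. Here \eqref{fontos4} converts $\frac{d}{dt}g_t(x(t)+\b(t)\dot x(t))$ into an inner product with $\n g_t(x(t)+\b(t)\dot x(t))=\n g(x(t)+\b(t)\dot x(t))+\e(t)(x(t)+\b(t)\dot x(t))$ plus a term involving $\dot\e(t)$; meanwhile \eqref{fontos1} controls $\frac{d}{dt}x_t$ by $-\frac{\dot\e(t)}{\e(t)}\|x_t\|$. Substituting $\ddot x(t)$ from \eqref{DynSys} and using \eqref{fontos0} to recognize the combination $\n g(x(t)+\b(t)\dot x(t))+\e(t)x(t)$ (which already appears in the dynamical system), the computation parallels \eqref{engderiv2}--\eqref{engderiv3}. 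The crucial difference is that the gradient inequality \eqref{fontos2} applied with $y=x_t$, $x=x(t)+\b(t)\dot x(t)$ now yields
\begin{align*}
-\bigl\langle \n g(x(t)+\b(t)\dot x(t))+\e(t)x(t),\,x(t)-x_t\bigr\rangle \;\le\; -\bigl(g_t(x(t)+\b(t)\dot x(t))-g_t(x_t)\bigr) \;-\; \tfrac{\e(t)}{2}\|x(t)-x_t\|^2 \;+\;(\text{controllable cross terms}),
\end{align*}
so that the strong convexity constant $\e(t)$ of $g_t$ produces a negative contribution $-\tfrac{bt^q\e(t)}{2}\|x(t)-x_t\|^2$ in $\dot{\mathcal{E}}(t)$. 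This extra negative term, which was absent in the weak-convergence analysis, is what eventually forces $\|x(t)-x_t\|\to 0$.

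After using Young's inequality on the unsigned cross terms (as in \eqref{tempy}--\eqref{tempy1}) and grouping, one arrives at a differential inequality of the type
\begin{align*}
\dot{\mathcal{E}}(t) \;+\; \tfrac{L}{t}\mathcal{E}(t) \;+\; r_0 t^q\bigl(g_t(x(t)+\b(t)\dot x(t))-g_t(x_t)\bigr) \;+\; r_1 t^q\|\dot x(t)\|^2 \;+\; r_2(t)\bigl\|\n g(x(t)+\b(t)\dot x(t))+\e(t)x(t)\bigr\|^2 \;\le\; R(t),
\end{align*}
for constants $r_0,r_1>0$, with $r_2(t)=r_2t^{2q}$ if $\g>0$ and $r_2(t)=r_2t^{2q-1}$ if $\g=0$. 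Here condition \eqref{C2} is used precisely to guarantee that the coefficient in front of $\|\dot x(t)\|^2$ coming from the $\b(t)\e(t)t^{2q}\|\n g\|^2$ estimate (combined with the $\a/t^q$ damping) remains negative for large $t$, while condition \eqref{C3} is used to absorb the contribution of $\frac{d}{dt}x_t$ via \eqref{fontos1}. The remainder $R(t)$ collects terms of the shape $t^{q-1}\e(t)\|x_t\|^2$, $\frac{|\dot\e(t)|}{\e(t)}\|x_t\|\cdot(\cdots)$, etc., all of which are of order $\e(t)t^{q+r-1}$ after invoking \eqref{C1} and \eqref{C3}.

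The last step is to solve this differential inequality by multiplying by $t^L$, integrating, and dividing back by $t^L$, which is exactly the trick used in Theorem \ref{pq1}. This delivers the pointwise rates in (ii): the bound on $\mathcal{E}(t)$ simultaneously controls $g_t(x(t)+\b(t)\dot x(t))-g_t(x_t)$, $\|\dot x(t)\|$ (via the anchor term combined with $\|x(t)-x_t\|$), and $\|x(t)-x_t\|$. From $\|x(t)-x_t\|\to 0$ together with the classical fact $x_t\to x^*$ (see \cite{att-com1996} and the discussion following \eqref{fontos0general}), assertion (i) follows. The rate on $g(x(t)+\b(t)\dot x(t))-\min g$ is then obtained from \eqref{fontos5} with $y=x^*$. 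For (iii), under the extra assumption $t^{2q-1}\e(t)\in L^1$ one integrates the same inequality, observing that $R(t)$ becomes integrable, and reads off the stated integral bounds from the non-negative terms on the left; the gradient estimate follows from the $r_2(t)\|\n g(\cdot)+\e(t)x(t)\|^2$ term (in the case $\g>0$) or from an analogous estimate peeling off $\e(t)x(t)$ (in the case $\g=0$).

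The main obstacle I anticipate is the careful bookkeeping in the differentiation of $\mathcal{E}$: the moving anchor $x_t$ introduces $\frac{d}{dt}x_t$ contributions which, unlike in the fixed-anchor case of \eqref{engderiv1}, are not zero and must be tamed by \eqref{fontos1} and \eqref{C3}. The delicate balance of the free parameters $b$, $P$, $R$, $L$ and the weight $\xi(t)$ must be arranged so that all five coefficients (of the three non-negative quadratic terms on the left, of $\mathcal{E}(t)/t$, and of the signed terms proportional to $t^{q-2}\|x(t)-x_t\|^2$ and $t^{q-1}\e(t)$) come out with the correct sign for \emph{all} $q\in\,]0,1[$; this is where condition \eqref{C2} is essential and cannot be relaxed.
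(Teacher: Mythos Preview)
Your overall strategy—anchoring the Lyapunov function at the moving point $x_t$, replacing $g-g^*$ by $g_t-g_t(x_t)$, and exploiting the strong convexity of $g_t$ via \eqref{fontos2}—is exactly the paper's. The use of \eqref{fontos1} together with \eqref{C3} to tame the $\frac{d}{dt}x_t$ terms, and of \eqref{C2} to keep the $\|\dot x(t)\|^2$ coefficient negative, is also right. Two points, however, deserve comment.

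\textbf{A real gap: the decay multiplier.} You propose to close with the inequality
\[
\dot{\mathcal{E}}(t)+\tfrac{L}{t}\,\mathcal{E}(t)\le R(t)
\]
and then ``multiply by $t^L$'', citing the trick from Theorem~\ref{pq1}. But that trick only yields \emph{boundedness} of $\mathcal{E}$: indeed, in Theorem~\ref{pq1} the conclusion is merely $\mathcal{E}(T)\le M/L+o(1)$. Here the forcing term $R(t)$ has a piece of order $t^{2q-1}\e(t)$ (coming from $-\tfrac{a(t)\dot\e(t)}{2}\|x_t\|^2$ after \eqref{C3}), and without the extra hypothesis of (iii) this is not in $L^1$; so the $t^L$ method gives at best $\mathcal{E}(T)=\mathcal{O}(1)$, hence only $\|x(t)-x_t\|=\mathcal{O}(1)$. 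That is not enough for (i), let alone the rates in (ii). The paper instead uses the multiplier $\tfrac{K}{t^r}$ with $r\in(q,1)$ from \eqref{C1}, whose integrating factor is the \emph{stretched exponential} $\exp\bigl(\tfrac{K}{1-r}t^{1-r}\bigr)$. This super-polynomial growth is what makes the subsequent integration-by-parts argument (equations \eqref{lfos}--\eqref{lfos111}) produce the genuine decay $E(T)=\mathcal{O}(T^{2r-2}+\e(T)T^{2q+r-1})$, from which all of (ii) and then (i) follow. The exponent $r$ is not free: it must coincide with the $r$ in \eqref{C1} so that the coefficient of $\|x(t)-x_t\|^2$ (which acquires a term $\sim (K+M)t^{-r}$ from $\tfrac{K}{t^r}E(t)$ and from the Young parameter $m(t)=t^r/M$) can be killed by $-\tfrac{b}{2}t^q\e(t)\le -\tfrac{bK_1}{2}t^{-r}$.

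\textbf{A minor technical difference.} You take $a(t)=t^{2q}$ and plan to handle the cross term $\langle\nabla g(\cdot),\dot x(t)\rangle$ by Young's inequality as in \eqref{tempy}. The paper instead chooses
\[
a(t)=\frac{c^2(t)-bc(t)\b(t)}{1+\b'(t)-\tfrac{\a\b(t)}{t^q}+\b^2(t)\e(t)},
\]
which is engineered so that this cross term \emph{cancels exactly}. Since $a(t)=t^{2q}+\mathcal{O}(t^q)$, your choice is asymptotically equivalent and the Young-inequality route should work, but the paper's choice streamlines the bookkeeping considerably and avoids introducing extra parameters at this stage.
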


\begin{proof}
Let us define, for every $t \geq t_0$, the following energy functional
\begin{align}\label{Lyapunovstr}
E(t)&=a(t)(g_t(x(t) + \b(t)\dot{x}(t) )-g_t(x_t))+\frac{1}{2} \| b(x(t)-x_t) + c(t) \dot{x}(t) \|^2 + \frac{d(t)}{2}\|x(t)-x_t\|^2,
\end{align}
where $b \in (0, \a)$, $c(t) = t^q$, $d(t) = b(\a - b - qt^{q-1})$ and
$
a(t) = \dfrac{c^2(t) - bc(t)\b(t)}{1 + \b^\prime(t) - \dfrac{\a\b(t)}{t^q} + \b^2(t)\e(t)}.
$
Obviously there exists $t_1\ge t_0$ such that $a(t)\ge 0$ and $d(t)\ge 0$ for all $t\ge t_1$. Observe further that $a(t)=\mathcal{O}(t^{2q})$ as  $t\to+\infty.$ Therefore, by deriving $E(t)$ we get
\begin{align}\label{DerivLyapunovStr1}
\dot{E}(t)&=a^\prime(t)(g_t(x(t)+\b(t)\dot{x}(t))-g_t(x_t)) + a(t) \left( \frac{d}{dt} g_t( x(t) + \b(t) \dot{x}(t) ) - \frac{d}{dt} g_t(x_t) \right) \\
\nonumber&+ \left\< b(x(t)-x_t) + c(t)\dot{x}(t), b\left( \dot{x}(t) - \frac{d}{dt} x_t \right) + c^\prime(t)\dot{x}(t) + c(t)\ddot{x}(t) \right\> \\
\nonumber& + \frac{d^\prime(t)}{2} \| x(t) - x_t \|^2 + d(t) \left\< x(t) - x_t, \dot{x}(t) - \frac{d}{dt} x_t \right\> \, , \text{ for all } t \ge t_1.
\end{align}
In one hand, from \eqref{fontos0} and \eqref{fontos4}, we find that
\begin{align}\label{DerivGradient0}
\frac{d}{dt} g_t(x_t) = \left\< \n g_t(x_t), \frac{d}{dt} x_t \right\> + \frac{\dot{\e}(t)}{2} \| x_t \|^2 = \frac{\dot{\e}(t)}{2} \| x_t \|^2.
\end{align}
On the other hand, using  \eqref{DynSys} along with \eqref{fontos4} and \eqref{fontos0general} it follows that
\begin{align}\label{DerivGradient1}
\frac{d}{dt} g_t(x(t) + \b(t) \dot{x}(t)) &= \frac{\dot{\e}(t)}{2} \| x(t) + \b(t) \dot{x}(t) \|^2 +  \b(t)\e(t) \left( 1 + \b^\prime(t) - \frac{\a\b(t)}{t^q} \right) \| \dot{x}(t) \|^2\\
\nonumber&- \b(t)\e^2(t) \| x(t) \|^2 - \b(t) \| \n g(x(t) + \b(t) \dot{x}(t)) \|^2\\
\nonumber&+\left( 1 + \b^\prime(t) - \frac{\a\b(t)}{t^q} - \b^2(t)\e(t) \right) \<  \n g(x(t) + \b(t) \dot{x}(t)),\dot{x}(t) \> \\
\nonumber&+ \left( \e(t) \left( 1 + \b^\prime(t) - \frac{\a\b(t)}{t^q} \right) - \b^2(t)\e^2(t) \right) \< x(t), \dot{x}(t) \>  \\
\nonumber&- 2\b(t)\e(t) \<  \n g(x(t) + \b(t) \dot{x}(t)),x(t) \> .
\end{align}
Further, by using the dynamical system \eqref{DynSys}, we obtain
\begin{align}\label{DerivGradient2}
 &\left\< b(x(t)-x_t) + c(t)\dot{x}(t),  b\dot{x}(t) + c^\prime(t)\dot{x}(t) + c(t)\ddot{x}(t) \right\>=c(t) \left( b + c^\prime(t) - \frac{\a c(t)}{t^q} \right) \| \dot{x}(t) \|^2 \\
  \nonumber&+ b \left( b + c^\prime(t) - \frac{\a c(t)}{t^q} \right) \< x(t) - x_t, \dot{x}(t) \> - \e(t)c(t)b \< x(t) - x_t, x(t) \> - \e(t)c^2(t) \< x(t),\dot{x}(t) \>\\
\nonumber&  - b c(t) \<  \n g(x(t) + \b(t) \dot{x}(t)),x(t) - x_t \>  - c^2(t) \<  \n g(x(t) + \b(t) \dot{x}(t)),\dot{x}(t) \>.
\end{align}
Combining \eqref{DerivLyapunovStr1} with \eqref{DerivGradient0}, \eqref{DerivGradient1} and \eqref{DerivGradient2}, and taking into account that $d(t) + b \left( b + c^\prime(t) - \frac{\a c(t)}{t^q}\right) =0$  and $a(t) \left( 1 + \b^\prime(t) - \frac{\a\b(t)}{t^q} - \b^2(t)\e(t) \right)=c^2(t)-bc(t)\b(t)$  it follows that
\begin{align}\label{DerivLyapunovStr2}
\dot{E}(t) &= a^\prime(t) \left( g_t(x(t)+\b(t)\dot{x}(t))-g_t(x_t) \right) + \frac{a(t)\dot{\e}(t)}{2} \left( \| x(t) + \b(t) \dot{x}(t) \|^2 - \| x_t \|^2 \right) \\
\nonumber&+ \frac{d^\prime(t)}{2} \| x(t) - x_t \|^2 - a(t)\b(t) \| \n g(x(t) + \b(t) \dot{x}(t)) \|^2 - a(t)\b(t)\e^2(t) \| x(t) \|^2 \\
\nonumber&+ \left( a(t)\b(t)\e(t) \left( 1 + \b^\prime(t) - \frac{\a\b(t)}{t^q} \right) + c(t) \left( b + c^\prime(t) - \frac{\a c(t)}{t^q} \right) \right) \| \dot{x}(t) \|^2 \\
\nonumber&-bc(t)\b(t)\e(t) \<  x(t),\dot{x}(t) \> - b\e(t)c(t) \< x(t) - x_t, x(t) \>  -bc(t)\b(t)  \<  \n g(x(t) + \b(t) \dot{x}(t)) ,\dot{x}(t)\> \\
\nonumber&- 2 \b(t)a(t)\e(t) \<  \n g(x(t) + \b(t) \dot{x}(t)) , x(t)\> - bc(t) \<  \n g(x(t) + \b(t) \dot{x}(t)),x(t) - x_t \> \\
\nonumber&- bc(t) \left\< \dot{x}(t), \frac{d}{dt} x_t \right\> - \left( b^2 + d(t) \right) \left\< x(t) - x_t, \frac{d}{dt} x_t \right\> \, , \text{ for all } t \ge t_1.
\end{align}

Observe that by using  \eqref{fontos2} we get
\begin{align*}
&-bc(t)\b(t)\e(t) \<  x(t),\dot{x}(t) \> - b\e(t)c(t) \< x(t) - x_t, x(t) \>  -bc(t)\b(t)  \<  \n g(x(t) + \b(t) \dot{x}(t)) ,\dot{x}(t)\> \\
& - bc(t) \<  \n g(x(t) + \b(t) \dot{x}(t)),x(t) - x_t \> =- bc(t) \<  \n g_t(x(t) + \b(t) \dot{x}(t)),x(t)+\b(t) \dot{x}(t) - x_t \> \\
&+b\b(t)\e(t)c(t)\<\dot{x}(t),x(t)+\b(t) \dot{x}(t) - x_t \>\le bc(t) \left( g_t(x_t) - g_t(x(t) + \b(t) \dot{x}(t)) \right)\\
&- \frac{bc(t)\e(t)}{2} \| x(t) + \b(t) \dot{x}(t) - x_t \|^2+b\b(t)\e(t)c(t)\<\dot{x}(t),x(t)+\b(t) \dot{x}(t) - x_t \>=\\
&bc(t) \left( g_t(x_t) - g_t(x(t) + \b(t) \dot{x}(t)) \right)- \frac{bc(t)\e(t)}{2} \| x(t)  - x_t \|^2+ \frac{bc(t)\e(t)\b^2(t)}{2} \| \dot{x}(t)\|^2.
\end{align*}

Further, we have
\begin{align*}
&-a(t)\b(t) \| \n g(x(t) + \b(t) \dot{x}(t)) \|^2 -a(t)\b(t)\e^2(t) \| x(t) \|^2-2\b(t)a(t)\e(t) \<  \n g(x(t) + \b(t) \dot{x}(t)),x(t) \> = \\
&-a(t)\b(t)\|\n g(x(t) + \b(t) \dot{x}(t))+\e(t)x(t) \|^2
\end{align*}
Consequently, \eqref{DerivLyapunovStr2} becomes

\begin{align}\label{DerivLyapunovStr3}
\dot{E}(t) &= (a^\prime(t)-bc(t)) \left( g_t(x(t)+\b(t)\dot{x}(t))-g_t(x_t) \right) + \frac{a(t)\dot{\e}(t)}{2} \left( \| x(t) + \b(t) \dot{x}(t) \|^2 - \| x_t \|^2 \right) \\
\nonumber&+ \frac{d^\prime(t)-bc(t)\e(t)}{2} \| x(t) - x_t \|^2-a(t)\b(t)\|\n g(x(t) + \b(t) \dot{x}(t))+\e(t)x(t) \|^2 \\
\nonumber&+ \left( a(t)\b(t)\e(t) \left( 1 + \b^\prime(t) - \frac{\a\b(t)}{t^q} \right) + c(t) \left( b + c^\prime(t) - \frac{\a c(t)}{t^q} \right)+\frac{bc(t)\e(t)\b^2(t)}{2} \right) \| \dot{x}(t) \|^2 \\
\nonumber&- bc(t) \left\< \dot{x}(t), \frac{d}{dt} x_t \right\> - \left( b^2 + d(t) \right) \left\< x(t) - x_t, \frac{d}{dt} x_t \right\> \, , \text{ for all } t \ge t_1.
\end{align}

Now we use \eqref{fontos1} for evaluating the term involving $\left\< \frac{d}{dt} x_t, \dot{x}(t) \right\>$ and we conclude that for every function $n(t) > 0$ one has
\begin{align}\label{DerivGradient4}
-bc(t)\left\< \frac{d}{dt} x_t, \dot{x}(t) \right\> &\le  \frac{bc(t)}{2} \left( \frac{1}{n(t)} \left( \frac{\dot{\e}(t)}{\e(t)} \right)^2 \| x_t \|^2 + n(t)\| \dot{x}(t) \|^2 \right).
\end{align}
Similarly, for any function $m(t) > 0$ we obtain that
\begin{align}\label{DerivGradient5}
- \left( b^2 + d(t) \right) \left\< \frac{d}{dt} x_t, x(t) - x_t \right\> &\le \frac{b^2+ d(t)}{2} \left( m(t) \left( \frac{\dot{\e}(t)}{\e(t)} \right)^2 \left\| x_t \right\|^2 + \frac{1}{m(t)} \| x(t) - x_t \|^2 \right).
\end{align}

Hence,  by using \eqref{DerivGradient4} and \eqref{DerivGradient5} we get

\begin{align}\label{DerivLyapunovStr4}
\dot{E}(t) &\le \left( a^\prime(t) - bc(t) \right) \left( g_t(x(t)+\b(t)\dot{x}(t))-g_t(x_t) \right) + \frac{a(t)\dot{\e}(t)}{2}  \| x(t) + \b(t) \dot{x}(t) \|^2  \\
\nonumber&+ \left(\frac{d^\prime(t)}{2} - \frac{bc(t)\e(t)}{2}+\frac{b^2+ d(t)}{2m(t)}\right) \| x(t) - x_t \|^2 - a(t)\b(t) \| \n g(x(t) + \b(t) \dot{x}(t)) +\e(t)x(t)\|^2\\
\nonumber&+ \left( a(t)\b(t)\e(t) \left( 1 + \b^\prime(t) - \frac{\a\b(t)}{t^q} \right) + c(t) \left( b+ c^\prime(t) - \frac{\a c(t)}{t^q} \right) + \frac{bc(t)\b^2(t)\e(t)}{2}+\frac{bn(t) c(t)}{2} \right) \| \dot{x}(t) \|^2 \\
\nonumber& +\left(\left(\frac{bc(t)}{2n(t)}+ \frac{b^2+ d(t)}{2}m(t)\right)\left( \frac{\dot{\e}(t)}{\e(t)} \right)^2 -\frac{a(t)\dot{\e}(t)}{2} \right) \| x_t \|^2\, , \text{ for all } t \ge t_1.
\end{align}

Consider now  $K> 0$ which shall be defined later and $q<r< 1$ defined at condition (C1). Knowing that
\begin{align*}
\frac{1}{2} \| b(x(t) - x_t) + c(t)\dot{x}(t) \|^2 \le b^2 \| x(t) - x_t \|^2 + c^2(t)  \| \dot{x}(t) \|^2,
\end{align*}
we get
$$\frac{K}{t^r}E(t)\le \frac{K a(t)}{t^r} \left( g_t(x(t)+\b(t)\dot{x}(t))-g_t(x_t) \right)+\frac{K}{t^r} c^2(t)  \| \dot{x}(t) \|^2+\frac{K}{t^r} \left(\frac{d(t)}{2}+b^2\right)\|x(t)-x_t\|^2.$$

Hence, we take $m(t)=\frac{1}{M}t^r$ and $n(t)=Nt^{q-r}$ for some $N,M>0$ and we get
\begin{align}\label{DerivLyapunovStr5}
\dot{E}(t)& + \frac{K}{t^r} E(t) \le \left( a^\prime(t) - bc(t)+\frac{K a(t)}{t^r} \right) \left( g_t(x(t)+\b(t)\dot{x}(t))-g_t(x_t) \right) + \frac{a(t)\dot{\e}(t)}{2}  \| x(t) + \b(t) \dot{x}(t) \|^2  \\
\nonumber&+ \left(\frac{d^\prime(t)}{2} - \frac{bc(t)\e(t)}{2}+\frac{1}{t^r}\left(K \frac{d(t)+2b^2}{2}+M\frac{b^2+ d(t)}{2}\right)\right) \| x(t) - x_t \|^2\\
\nonumber& - a(t)\b(t) \| \n g(x(t) + \b(t) \dot{x}(t)) +\e(t)x(t)\|^2\\
\nonumber&+ \left( a(t)\b(t)\e(t) \left( 1 + \b^\prime(t) - \frac{\a\b(t)}{t^q} \right) + c(t) \left( b+ c^\prime(t) - \frac{\a c(t)}{t^q} \right) + \frac{bc(t)\b^2(t)\e(t)}{2}+\left(K+\frac{bN}{2}\right)t^{2q-r}\right) \| \dot{x}(t) \|^2 \\
\nonumber& +\left( \left(\frac{b^2+ d(t)}{2M}+\frac{b}{2N}\right)t^r\left( \frac{\dot{\e}(t)}{\e(t)} \right)^2 -\frac{a(t)\dot{\e}(t)}{2} \right) \| x_t \|^2\, , \text{ for all } t \ge t_1.
\end{align}

Let us evaluate the coefficients on the right hand side of \eqref{DerivLyapunovStr5}.

  Since $$a(t)=t^{2q}+\frac{(\a-b)\g t^q+(\a-b)\b +\b q t^{q-1}-\left(\g+\frac{\b}{t^q}\right)^2\e(t)t^{2q}}{1-\frac{\b q}{t^{q+1}}-\frac{\a\g}{t^q}-\frac{\a\b}{t^{2q}}+\left(\g+\frac{\b}{t^q}\right)^2\e(t)},$$
further $q<1$ and $r>q$ we conclude that for $t$ big enough one has
$$a^\prime(t) - bc(t)+\frac{K a(t)}{t^r} <0.$$
Obviously the coefficient of $\| x(t) + \b(t) \dot{x}(t) \|^2$ and $\| \n g(x(t) + \b(t) \dot{x}(t)) +\e(t)x(t)\|^2$ are nonpositive for all $t\ge t_1.$

Concerning the coefficient of $ \| x(t) - x_t \|^2$ one has
\begin{align*}
\frac{d^\prime(t)}{2} - \frac{bc(t)\e(t)}{2}+\frac{1}{t^r}\left(K \frac{d(t)+2b^2}{2}+M\frac{b^2+ d(t)}{2}\right)=&\frac{b}{2}\left(-t^q\e(t)+(K(\a+b)+M\a)t^{-r}-q(K+M)t^{q-r-1}\right)\\
&+\frac{bq(1-q)}{2}t^{q-2}.
\end{align*}
Now, according to (C1) there exists $K_1>0$ such that $\e(t)\ge\frac{K_1}{t^{q+r}}$ for $t$ big enough, hence taking into account that $b<\a$ we take $K<\frac{K_1}{4\a}$, $M<\frac{K_1}{2\a}$ and we get that
$$\frac{d^\prime(t)}{2} - \frac{bc(t)\e(t)}{2}+\frac{1}{t^r}\left(K \frac{d(t)+2b^2}{2}+M\frac{b^2+ d(t)}{2}\right)\le 0$$
for $t$ big enough.

Finally, by taking into account that $a(t)=t^{2q}+\mathcal{O}(t^q)$ as $t\to+\infty$, concerning the coefficient of $\|\dot{x}(t)\|^2$ we conclude the following. If $\g>0$ then
\begin{align*}
& a(t)\b(t)\e(t) \left( 1 + \b^\prime(t) - \frac{\a\b(t)}{t^q} \right) + c(t) \left( b+ c^\prime(t) - \frac{\a c(t)}{t^q} \right) + \frac{bc(t)\b^2(t)\e(t)}{2}+\left(K+\frac{bN}{2}\right)t^{2q-r}=\\
&(b-\a)t^q+\g\e(t)t^{2q}+\mathcal{O}(t^q\e(t))+\mathcal{O}(t^{2q-r})\mbox{ as }t\to+\infty,
\end{align*}
hence, by taking account condition (C2), i.e., $\e(t)\le \frac{\a}{2\g t^q}$ and by fixing $b<\frac{\a}{2}$ we obtain that
$$a(t)\b(t)\e(t) \left( 1 + \b^\prime(t) - \frac{\a\b(t)}{t^q} \right) + c(t) \left( b+ c^\prime(t) - \frac{\a c(t)}{t^q} \right) + \frac{bc(t)\b^2(t)\e(t)}{2}+\left(K+\frac{bN}{2}\right)t^{2q-r}\le 0$$
for $t$ big enough.

In case $\g=0,\,\b>0$ we have
\begin{align*}
& a(t)\b(t)\e(t) \left( 1 + \b^\prime(t) - \frac{\a\b(t)}{t^q} \right) + c(t) \left( b+ c^\prime(t) - \frac{\a c(t)}{t^q} \right) + \frac{bc(t)\b^2(t)\e(t)}{2}+\left(K+\frac{bN}{2}\right)t^{2q-r}=\\
&(b-\a)t^q+\mathcal{O}(t^{q}\e(t))+\mathcal{O}(t^{2q-r})\mbox{ as }t\to+\infty,
\end{align*}
and by since $b<\frac{a}{2}\a$ and taking into account that $\e(t)$ is nonincreasing we get that also in this case
$$a(t)\b(t)\e(t) \left( 1 + \b^\prime(t) - \frac{\a\b(t)}{t^q} \right) + c(t) \left( b+ c^\prime(t) - \frac{\a c(t)}{t^q} \right) + \frac{bc(t)\b^2(t)\e(t)}{2}+\left(K+\frac{bN}{2}\right)t^{2q-r}\le 0$$
for $t$ big enough.

Consequently, there exists $t_2\ge t_1$ such that for all $t\ge t_2$ one has
\begin{align}\label{DerivLyapunovStr6}
\dot{E}(t)+ \frac{K}{t^r} E(t) \le \left(  \left(\frac{b^2+ d(t)}{2M}+\frac{b}{2N}\right)t^r\left( \frac{\dot{\e}(t)}{\e(t)} \right)^2 -\frac{a(t)\dot{\e}(t)}{2} \right) \| x_t \|^2.
\end{align}
Now, according to (C3) there exists $K_3>0$ such that $\left(\frac{\dot{\e}(t)}{\e(t)} \right)^2\le \frac{K_3}{t^2}$ for $t$ big enough
and therefore
$0\le- \dot{\e}(t) \le\frac{\sqrt{K_3}}{t}\e(t)$ for $t$ big enough.

Further, one has $\|x_t\|\le\|x^*\|$ where $x^*$ is the minimal norm element from the set $\argmin g.$ Hence, by taking into account also the form of $a(t)$ we conclude that there exists $t_3\ge t_2$ and a constant $C_0>0$ such that
\begin{align}\label{fi}
&\left( \left(\frac{b^2+ d(t)}{2M}+\frac{b}{2N}\right)t^r\left( \frac{\dot{\e}(t)}{\e(t)} \right)^2 -\frac{a(t)\dot{\e}(t)}{2} \right) \| x_t \|^2\le C_0(t^{r-2}+t^{2q-1}\e(t))\mbox{ for all }t\ge t_3.
\end{align}
Consequently, \eqref{DerivLyapunovStr6} leads to
\begin{align}\label{DerivLyapunovStr7}
\dot{E}(t)+ \frac{K}{t^r} E(t) \le C_0(t^{r-2}+t^{2q-1}\e(t))\mbox{ for all }t\ge t_3.
\end{align}

By multiplying \eqref{DerivLyapunovStr7} with $e^{\frac{K}{1-r}t^{1-r}}$ we obtain

\begin{align}\label{DerivLyapunovStr8}
\frac{d}{dt}\left(e^{\frac{K}{1-r}t^{1-r}} E(t)\right)\le C_0(t^{r-2}+t^{2q-1}\e(t))e^{\frac{K}{1-r}t^{1-r}}\mbox{ for all }t\ge t_3.
\end{align}
By integrating \eqref{DerivLyapunovStr8} on an interval $[t_3,T],\, T>t_3$ we obtain

\begin{align}\label{DerivLyapunovStr9}
e^{\frac{K}{1-r}T^{1-r}} E(T)-\mathcal{C}_0\le C_0\int_{t_3}^T(t^{r-2}+t^{2q-1}\e(t))e^{\frac{K}{1-r}t^{1-r}}dt,
\end{align}
where $\mathcal{C}_0$ is the constant $e^{\frac{K}{1-r}t_3^{1-r}} E(t_3).$

In what follows we will discuss the right hand side of \eqref{DerivLyapunovStr9}.
First of all, note that for a function $t\rightarrow t^ue^{ct^v},u\in\R,\, c,v>0$ one has
\begin{equation}\label{lfos}
\frac{d}{dt}\left(t^ue^{ct^v}\right)=(ut^{u-1}+cvt^{u+v-1})e^{ct^v}\ge c_1 t^{u+v-1}e^{ct^v},
\end{equation}
for some $c_1>0$ and  $t$ big enough.
Consequently, by deploying \eqref{lfos} we conclude that there exists $t_4\ge t_3$ and $c_1,c_2>0$ such that for all $t\ge t_4$ it holds
$$t^{r-2}e^{\frac{K}{1-r}t^{1-r}}\le c_1\frac{d}{dt}\left(t^{2r-2}e^{\frac{K}{1-r}t^{1-r}}\right)$$
and
$$t^{2q-1}\e(t)e^{\frac{K}{1-r}t^{1-r}}\le c_2\e(t)\frac{d}{dt}\left(t^{2q+r-1}e^{\frac{K}{1-r}t^{1-r}}\right).$$

Therefore, there is a constant $\mathcal{K}$ such that
\begin{equation}\label{lfos1}
\int_{t_3}^T t^{r-2}e^{\frac{K}{1-r}t^{1-r}}dt\le \mathcal{C}+ c_1 \int_{t_4}^T\frac{d}{dt}\left(t^{2r-2}e^{\frac{K}{1-r}t^{1-r}}\right)dt=c_1T^{2r-2}e^{\frac{K}{1-r}T^{1-r}}+\mathcal{K},
\end{equation}
where $\mathcal{C}=\int_{t_3}^{t_4} t^{r-2}e^{\frac{K}{1-r}t^{1-r}}dt$ and $\mathcal{K}=\mathcal{C}-c_1t_4^{2r-2}e^{\frac{K}{1-r}t_4^{1-r}}.$

Similarly, by using  integration by parts and also that $- \dot{\e}(t) \le\frac{\sqrt{K_3}}{t}\e(t)$ for all $t\ge t_4$ we obtain

\begin{align}\label{lfos11}
\int_{t_3}^T \e(t)t^{2q-1}e^{\frac{K}{1-r}t^{1-r}}dt&\le \mathcal{C}_0+ c_2\int_{t_4}^T \e(t)\frac{d}{dt}\left(t^{2q+r-1}e^{\frac{K}{1-r}t^{1-r}}\right)dt=\mathcal{C}_1+c_2 \e(T) T^{2q+r-1}e^{\frac{K}{1-r}T^{1-r}}\\
\nonumber&\,+c_2\int_{t_4}^T -\dot{\e}(t)t^{2q+r-1}e^{\frac{K}{1-r}t^{1-r}}dt\\
\nonumber&\le \mathcal{C}_1+c_2 \e(T) T^{2q+r-1}e^{\frac{K}{1-r}T^{1-r}}+c_2\sqrt{K_3}\int_{t_4}^T \e(t)t^{2q+r-2}e^{\frac{K}{1-r}t^{1-r}}dt,
\end{align}
where $\mathcal{C}_0=\int_{t_3}^{t_4} \e(t)t^{2q-1}e^{\frac{K}{1-r}t^{1-r}}dt$ and $\mathcal{C}_1=\mathcal{C}_0-c_2 \e(t_4) t_4^{2q+r-1}e^{\frac{K}{1-r}t_4^{1-r}}.$

 Now, since $r<1$ there exists $t_5\ge t_4$ such that
 $c_2\sqrt{K_3} \e(t)t^{2q+r-2}e^{\frac{K}{1-r}t^{1-r}}\le \frac12 \e(t)t^{2q-1}e^{\frac{K}{1-r}t^{1-r}}$ for all $t\ge t_5$, hence
\begin{align}\label{llfos}
c_2\sqrt{K_3}\int_{t_4}^T \e(t)t^{2q+r-2}e^{\frac{K}{1-r}t^{1-r}}dt&\le c_2\sqrt{K_3}\int_{t_4}^{t_5} \e(t)t^{2q+r-2}e^{\frac{K}{1-r}t^{1-r}}dt+
\frac12 \int_{t_5}^T \e(t)t^{2q-1}e^{\frac{K}{1-r}t^{1-r}}dt\\
\nonumber&=\mathcal{C}_2-\mathcal{C}_3+\frac12 \int_{t_3}^T \e(t)t^{2q-1}e^{\frac{K}{1-r}t^{1-r}}dt,
\end{align}
where $\mathcal{C}_2=c_2\sqrt{K_3}\int_{t_4}^{t_5} \e(t)t^{2q+r-2}e^{\frac{K}{1-r}t^{1-r}}dt$ and $\mathcal{C}_3=\frac12 \int_{t_3}^{t_5} \e(t)t^{2q-1}e^{\frac{K}{1-r}t^{1-r}}dt.$

Combining \eqref{lfos11} and \eqref{llfos} we obtain that for all $T\ge t_5$ it holds

\begin{align}\label{lfos111}
\int_{t_3}^T \e(t)t^{2q-1}e^{\frac{K}{1-r}t^{1-r}}dt&\le 2(\mathcal{C}_1+\mathcal{C}_2-\mathcal{C}_3)+2c_2 \e(T) T^{2q+r-1}e^{\frac{K}{1-r}T^{1-r}}=\mathcal{C}+2c_2 \e(T) T^{2q+r-1}e^{\frac{K}{1-r}T^{1-r}}.
\end{align}

Hence, for $T$ big enough and for some $C_3>0$, \eqref{lfos1}, \eqref{lfos111} and \eqref{DerivLyapunovStr9} yield to
\begin{align}\label{DerivLyapunovStr91}
 E(T)&\le e^{-\frac{K}{1-r}T^{1-r}}\left(\mathcal{C}_0+C_0\left(\mathcal{K}+c_1T^{2r-2}e^{\frac{K}{1-r}T^{1-r}}+\mathcal{C}+2c_2 \e(T) T^{2q+r-1}e^{\frac{K}{1-r}T^{1-r}}\right)\right)\\
\nonumber&=\frac{\mathcal{K}_0}{e^{\frac{K}{1-r}T^{1-r}}}+C_1T^{2r-2}+C_2\e(T)T^{2q+r-1}\le C_3(T^{2r-2}+\e(T)T^{2q+r-1}),
\end{align}
where $C_1=C_0c_1,\,C_2=2C_0c_2$ and $\mathcal{K}_0=\mathcal{C}_0+C_0(\mathcal{K}+\mathcal{C}).$

 Taking into account the form of $E(t)$ we obtain at once that
$$g_t(x(t) + \b(t)\dot{x}(t) )-g_t(x_t)=\mathcal{O}(t^{2r-2q-2}+\e(t)t^{r-1})\mbox{ as }t\to+\infty,$$
$$\|\dot{x}(t) \|=\mathcal{O}(t^{r-q-1}+\sqrt{\e(t)}t^{\frac{r-1}{2}})\mbox{ as }t\to+\infty$$
and
$$\|x(t)-x_t\|=\mathcal{O}(t^{r-1}+\sqrt{\e(t)}t^{\frac{2q+r-1}{2}})\mbox{ as }t\to+\infty.$$
Now from $r<1,\,q<1$ and (C1) we get $\lim_{t\to+\infty}(t^{r-1}+\sqrt{\e(t)}t^{\frac{2q+r-1}{2}})=0$, further $\lim_{t\to+\infty}x_t=x^*$, hence the latter relation shows that $x(t)$ converges strongly to $x^*$, that is
$$\lim_{t\to+\infty}\|x(t)-x^*\|=0.$$

Even more, in case $\lim_{t\to+\infty}t^{2q}\e(t)>0$  a better estimate can be obtained. Indeed, by deploying \eqref{fontos3} we get

$$g_t(x(t)+\b(t)\dot{x}(t))-g_t(x_t)\ge\frac{\e(t)}{2}\|x(t)+\b(t)\dot{x}(t)-x_t\|^2,$$
hence
$$\|x(t)+\b(t)\dot{x}(t)-x_t\|=\mathcal{O}\left(t^{\frac{r-1}{2}}+\frac{t^{r-q-1}}{\sqrt{\e(t)}}\right)\mbox{ as }t\to+\infty.$$
By using the fact that
$\|\b(t)\dot{x}(t) \|=\mathcal{O}(\b(t)t^{r-q-1}+\b(t)\sqrt{\e(t)}t^{\frac{r-1}{2}})\mbox{ as }t\to+\infty$ we obtain that
$$|x(t)-x_t\|=\mathcal{O}\left(t^{\frac{r-1}{2}}+\frac{t^{r-q-1}}{\sqrt{\e(t)}}\right)\mbox{ as }t\to+\infty.$$

Finally, by using \eqref{fontos5} we get
$$g(x(t)+\b(t)\dot{x}(t))-g(x^*)\le g_t(x(t)+\b(t)\dot{x}(t))-g_t(x_t)+\frac{\e(t)}{2}\|x^*\|^2,$$
hence
$$g(x(t)+\b(t)\dot{x}(t))-\min g=\mathcal{O}(t^{2r-2q-2}+\e(t))\mbox{ as }t\to+\infty.$$

Concerning the integral estimates, let us take $K=0$ in \eqref{DerivLyapunovStr5}. Then,

 \begin{align}\label{DerivLyapunovStr5ie}
\dot{E}(t)& \le \left( a^\prime(t) - bc(t) \right) \left( g_t(x(t)+\b(t)\dot{x}(t))-g_t(x_t) \right) + \frac{a(t)\dot{\e}(t)}{2}  \| x(t) + \b(t) \dot{x}(t) \|^2  \\
\nonumber&+ \left(\frac{d^\prime(t)}{2} - \frac{bc(t)\e(t)}{2}+M\frac{b^2+ d(t)}{2t^r}\right) \| x(t) - x_t \|^2\\
\nonumber& - a(t)\b(t) \| \n g(x(t) + \b(t) \dot{x}(t)) +\e(t)x(t)\|^2\\
\nonumber&+ \left( a(t)\b(t)\e(t) \left( 1 + \b^\prime(t) - \frac{\a\b(t)}{t^q} \right) + c(t) \left( b+ c^\prime(t) - \frac{\a c(t)}{t^q} \right) + \frac{bc(t)\b^2(t)\e(t)}{2}+\frac{bN}{2}t^{2q-r}\right) \| \dot{x}(t) \|^2 \\
\nonumber& +\left( \left(\frac{b^2+ d(t)}{2M}+\frac{b}{2N}\right)t^r\left( \frac{\dot{\e}(t)}{\e(t)} \right)^2 -\frac{a(t)\dot{\e}(t)}{2} \right) \| x_t \|^2\, , \text{ for all } t \ge t_1.
\end{align}

Arguing as before, we conclude that for $t$ big enough, the coefficients of $ \| x(t) + \b(t) \dot{x}(t) \|^2$ and $\| x(t) - x_t \|^2$ are non-positive. Further, there exists $A_1>0$ such that
$$a^\prime(t) - bc(t) \le-A_1 t^q\mbox{ for }t\mbox{ big enough.}$$
Similarly, there exists $A_2>0$ such that for $t$ big enough  $- a(t)\b(t)\le -A_2t^{2q},$  if $\g\neq 0$ and  $- a(t)\b(t)\le -A_2t^{2q-1},$ if $\g=0.$

Finally, if $\g>0$ the coefficient of $\|\dot{x}(t)\|^2$ is
\begin{align*}
& a(t)\b(t)\e(t) \left( 1 + \b^\prime(t) - \frac{\a\b(t)}{t^q} \right) + c(t) \left( b+ c^\prime(t) - \frac{\a c(t)}{t^q} \right) + \frac{bc(t)\b^2(t)\e(t)}{2}+\frac{bN}{2}t^{2q-r}=\\
&(b-\a)t^q+\g\e(t)t^{2q}+\mathcal{O}(t^q\e(t))+\mathcal{O}(t^{2q-r})\mbox{ as }t\to+\infty,
\end{align*}
hence, by taking account condition (C2), i.e., $\e(t)\le \frac{\a}{2\g t^q}$ and by fixing $b<\frac{\a}{2}$ we obtain that there exists $A_3>0$ such  that
$$a(t)\b(t)\e(t) \left( 1 + \b^\prime(t) - \frac{\a\b(t)}{t^q} \right) + c(t) \left( b+ c^\prime(t) - \frac{\a c(t)}{t^q} \right) + \frac{bc(t)\b^2(t)\e(t)}{2}+\frac{bN}{2}t^{2q-r}\le -A_3t^q$$
for $t$ big enough.

In case $\g=0,\,\b>0$ we have
\begin{align*}
& a(t)\b(t)\e(t) \left( 1 + \b^\prime(t) - \frac{\a\b(t)}{t^q} \right) + c(t) \left( b+ c^\prime(t) - \frac{\a c(t)}{t^q} \right) + \frac{bc(t)\b^2(t)\e(t)}{2}+\left(K+\frac{bN}{2}\right)t^{2q-r}=\\
&(b-\a)t^q+\b\e(t)t^{2q-1}+\mathcal{O}(t^{q-1}\e(t))+\mathcal{O}(t^{2q-r})\mbox{ as }t\to+\infty,
\end{align*}
and by fixing $b<\a$ and taking into account that $\e(t)$ is nonincreasing we get that also in this case that there exists $A_3>0$ such that
$$a(t)\b(t)\e(t) \left( 1 + \b^\prime(t) - \frac{\a\b(t)}{t^q} \right) + c(t) \left( b+ c^\prime(t) - \frac{\a c(t)}{t^q} \right) + \frac{bc(t)\b^2(t)\e(t)}{2}+\left(K+\frac{bN}{2}\right)t^{2q-r}\le -A_3t^q$$
for $t$ big enough.

Further, according to \eqref{fi} one has
$\left( \left(\frac{b^2+ d(t)}{2M}+\frac{b}{2N}\right)t^r\left( \frac{\dot{\e}(t)}{\e(t)} \right)^2 -\frac{a(t)\dot{\e}(t)}{2} \right) \| x_t \|^2\le C_0(t^{r-2}+t^{2q-1}\e(t))$ for $t$ big enough, hence \eqref{DerivLyapunovStr5ie} leads to the existence of $t_1'\ge t_1$ such that

\begin{align}\label{DerivLyapunovStr5ie1}
&\dot{E}(t)+A_1t^q\left( g_t(x(t)+\b(t)\dot{x}(t))-g_t(x_t) \right)+A_2(t)\| \n g(x(t) + \b(t) \dot{x}(t)) +\e(t)x(t)\|^2+A_3t^q\|\dot{x}(t)\|^2 \le\\
&\nonumber C_0(t^{r-2}+t^{2q-1}\e(t))\mbox{ for all }t\ge t_1',
\end{align}
where $A_2(t)=A_2 t^{2q}$ if $\g>0$ and $A_2(t)=A_2 t^{2q-1}$ if $\g=0$.
Now, according to the hypotheses of the theorem, we have $t^{2q-1}\e(t)\in L^1[t_0,+\infty[$. Further, since $r<1$ we deduce that
$$ C_0\int_{t_1'}^{+\infty}t^{r-2}+t^{2q-1}\e(t) dt<+\infty.$$
Consequently, by integrating \eqref{DerivLyapunovStr5ie1} on an interval $[t_1', T]$ we obtain that there exists $C_1>0$ such that
\begin{align}\label{DerivLyapunovStr5ie2}
&E(T)+A_1\int_{t_1'}^T t^q\left( g_t(x(t)+\b(t)\dot{x}(t))-g_t(x_t) \right)dt+\int_{t_1'}^T A_2(t)\| \n g(x(t) + \b(t) \dot{x}(t)) +\e(t)x(t)\|^2dt\\
\nonumber&+A_3\int_{t_1'}^Tt^q\|\dot{x}(t)\|^2 dt \le C_1.
\end{align}
Hence,
$$\int_{t_0}^{+\infty} t^q\left( g_t(x(t)+\b(t)\dot{x}(t))-g_t(x_t) \right)dt<+\infty,$$
$$\int_{t_0}^{+\infty} t^q\|\dot{x}(t)\|^2 dt<+\infty,$$
$$\int_{t_0}^{+\infty}t^{2q}\| \n g(x(t) + \b(t) \dot{x}(t)) +\e(t)x(t)\|^2dt<+\infty,\mbox{ if }\g>0$$
and
$$\int_{t_0}^{+\infty}t^{2q-1}\| \n g(x(t) + \b(t) \dot{x}(t)) +\e(t)x(t)\|^2dt<+\infty,\mbox{ if }\g=0.$$

Further, since $t^{2q-1}\e(t)\in L^1[t_0,+\infty[$ obviously $t^{2q-1}\e^2(t)\in L^1[t_0,+\infty[$, hence $$\int_{t_0}^{+\infty}t^{2q-1}\| \n g(x(t) + \b(t) \dot{x}(t))\|^2dt<+\infty,\mbox{ if }\g=0.$$

\end{proof}

\begin{remark} Natural candidates for the Tikhonov regularization parameter are $\e(t)=\frac{a}{t^p},\,a,p>0.$ In this case condition (C1) is nothing else that $p<q+1.$ Indeed, for $p<q+1$ one can  choose $r\in]\max(q, p-q),1[$ and $K_1=a$ and one has $\e(t)\ge\frac{K_1}{t^{r+q}}.$ Note that condition (C2), in which we assume that $\g>0$, holds whenever $p>q$ or $p=q$ and $a\le \dfrac{\a}{2\g}.$ Finally, (C3) always holds.
\end{remark}
Consequently the following result is valid.
\begin{corollary}\label{specstr} Consider $0<q<1$ and $\e(t)=\frac{a}{t^p},\,a>0$ and $q\le p<q+1.$ Further, when $q=p$ assume that $a\le \dfrac{\a}{2\g}.$ Let $x$ be a trajectory generated by the dynamical system \eqref{DynSys}. Then the following statements hold true.

(i)  $x(t)$ converges strongly, as $t \to+\infty$, to $x^*$, the element of minimum norm of $\argmin g$;

(ii)
$g(x(t) + \b(t)\dot{x}(t) )-\min g=\mathcal{O}(t^{-\min(2q,p)})$
and $\|\dot{x}(t) \|=\mathcal{O}(t^{-\frac{\min(2q,p)}{2}})\mbox{ as }t\to+\infty,$
further for every $r\in]\max(q, p-q),1[$ one has
$\|x(t)-x_t\|=\mathcal{O}(t^{r-1}+t^{\frac{2q+r-p-1}{2}})\mbox{ as }t\to+\infty;$

(iii) If $p>2q$ then
$\int_{t_0}^{+\infty} t^q\left( g_t(x(t)+\b(t)\dot{x}(t))-g_t(x_t) \right)dt<+\infty$ and
$\int_{t_0}^{+\infty} t^q\|\dot{x}(t)\|^2 dt<+\infty.$ Further,
$\int_{t_0}^{+\infty}t^{2q}\| \n g(x(t) + \b(t) \dot{x}(t)) +t^{-p}x(t)\|^2dt<+\infty,\mbox{ if }\g>0$
and
$\int_{t_0}^{+\infty}t^{2q-1}\| \n g(x(t) + \b(t) \dot{x}(t))\|^2dt<+\infty,\mbox{ if }\g=0.$

(iv) Moreover, if $p>\max\left(\frac{2q+1}{2},2q\right),\,\g>0$ then
$\int_{t_0}^{+\infty}t^{2q}\| \n g(x(t) + \b(t) \dot{x}(t))\|^2dt<+\infty.$
\end{corollary}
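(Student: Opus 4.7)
The plan is to verify that the specific choice $\e(t)=a/t^p$ with $q\le p<q+1$ satisfies the structural hypotheses (C1)--(C3) (and the integrability assumption of part (iii)) of Theorem \ref{StrongConvergenceResultLyapunov}, and then to translate the general pointwise and integral estimates obtained there into the sharper expressions stated above. All four items follow from Theorem \ref{StrongConvergenceResultLyapunov} essentially by bookkeeping of exponents, with the only genuinely new ingredient appearing in part (iv).

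First I would check the three structural conditions on $\e(t)=a/t^p$. For (C1), since $p-q<1$ the interval $]\max(q,p-q),1[$ is nonempty, and for any $r$ in it one has $r+q>p$, whence $\e(t)=a\,t^{-p}\ge a\,t^{-(r+q)}$ for $t$ large; so (C1) holds with $K_1=a$. For (C2), when $\g\neq 0$ I would rewrite $t^q\e(t)=a\,t^{q-p}$: if $p>q$ this tends to $0$, while if $p=q$ the hypothesis $a\le \a/(2\g)$ gives the bound directly. For (C3), the direct computation $(\dot\e(t)/\e(t))^2=p^2/t^2$ yields $K_3=p^2$.

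With the hypotheses verified, part (i) follows immediately from Theorem \ref{StrongConvergenceResultLyapunov}(i). For (ii), I would substitute $\e(t)=a/t^p$ into the general estimates: $\|x(t)-x_t\|=\mathcal{O}(t^{r-1}+t^{(2q+r-p-1)/2})$ is exactly the stated form. For the function-value rate $\mathcal{O}(t^{2r-2q-2}+\e(t))$, the key observation is that for every admissible $r<1$ one has $t^{2r-2q-2}=t^{-2q}\cdot t^{2r-2}=o(t^{-2q})$; hence when $p\ge 2q$ both terms lie in $o(t^{-2q})+\mathcal{O}(t^{-p})\subseteq\mathcal{O}(t^{-2q})$, while when $p\le 2q$ choosing $r$ close to $q=\max(q,p-q)$ makes $t^{2r-2q-2}=o(t^{-p})$, so the $\e(t)=a t^{-p}$ term dominates. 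Combining the two cases produces $\mathcal{O}(t^{-\min(2q,p)})$. The velocity estimate $\|\dot x(t)\|=\mathcal{O}(t^{-\min(2q,p)/2})$ is obtained in exactly the same way from $\mathcal{O}(t^{r-q-1}+\sqrt{\e(t)}\,t^{(r-1)/2})$.

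For (iii), the extra integrability hypothesis of Theorem \ref{StrongConvergenceResultLyapunov}(iii) reads $t^{2q-1}\e(t)=a\,t^{2q-1-p}\in L^1[t_0,+\infty[$, which is equivalent to $p>2q$; under this assumption the four integral estimates of (iii) are just the specialization of that theorem. For (iv), which is the step requiring a small new argument, I would separate the pure gradient contribution from the combined one using the elementary inequality
\[
\|\n g(x(t)+\b(t)\dot x(t))\|^2\le 2\|\n g(x(t)+\b(t)\dot x(t))+\e(t)x(t)\|^2+2\e^2(t)\|x(t)\|^2.
\]
Multiplying by $t^{2q}$ and integrating, the first term is controlled by (iii) in the case $\g>0$, while the second contributes $\int t^{2q}\e^2(t)\|x(t)\|^2\,dt$. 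Since the strong convergence in (i) keeps $x(t)$ bounded and $t^{2q}\e^2(t)=a^2\,t^{2q-2p}$ lies in $L^1[t_0,+\infty[$ precisely when $p>(2q+1)/2$, the assumption $p>\max((2q+1)/2,2q)$ is exactly what is needed. The hard part is thus locating the correct integrability threshold in (iv); everything else is a direct specialization of Theorem \ref{StrongConvergenceResultLyapunov}.
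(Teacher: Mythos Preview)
Your proof is correct and follows essentially the same approach as the paper: verify (C1)--(C3) for $\e(t)=a/t^p$, invoke Theorem~\ref{StrongConvergenceResultLyapunov} for (i) and (iii), specialize the rate estimates for (ii), and use the splitting $\|\n g\|^2\le 2\|\n g+\e x\|^2+2\e^2\|x\|^2$ together with boundedness of $x(t)$ for (iv). The only notable stylistic difference is in (ii): where you argue by cases $p\ge 2q$ versus $p\le 2q$ and adjust $r$ accordingly, the paper simply observes that $r<1$ gives $t^{2r-2q-2}\le t^{-2q}$ for every admissible $r$, so $t^{2r-2q-2}+t^{-p}\le t^{-2q}+t^{-p}\le 2\,t^{-\min(2q,p)}$ in one line without any case split.
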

\begin{proof} Note that (i) and (iii) are direct applications of Theorem \ref{StrongConvergenceResultLyapunov}, hence let us show (ii). According to Theorem \ref{StrongConvergenceResultLyapunov} the fast convergence rates in this special case are
$g(x(t) + \b(t)\dot{x}(t) )-\min g=\mathcal{O}(t^{2r-2q-2}+t^{-p}),$
$\|\dot{x}(t) \|=\mathcal{O}(t^{r-q-1}+t^{\frac{r-p-1}{2}})$
and
$\|x(t)-x_t\|=\mathcal{O}(t^{r-1}+t^{\frac{2q+r-p-1}{2}})\mbox{ as }t\to+\infty.$

Now, taking into account that $r<1$ we get $t^{2r-2q-2}+t^{-p}\le t^{-2q}+t^{-p}\le 2 t^{-\min(2q,p)}$ the claim follows.

Concerning (iv), observe that in this case
$$t^{2q}\| \n g(x(t) + \b(t) \dot{x}(t))\|^2\le 2t^{2q}\| \n g(x(t) + \b(t) \dot{x}(t)) +t^{-p}x(t)\|^2+2t^{2q-2p}\|x(t)\|^2$$
and $t^{2q-2p}\|x(t)\|^2\in L^1[t_0,+\infty[,$ hence the claim is a consequence of (iii).
\end{proof}

\section{Conclusion, perspectives}
In this paper we studied a second order dynamical system with a Tikhonov regularization term and an implicit Hessian driven damping in connection to a minimization problem of a smooth convex function. Natural explicit discretization of our dynamical system leads to inertial algorithm of gradient type with a Tikhonov regularization term, hence the results of the present paper opens the gate for study the strong convergence properties of these algorithms. We emphasize that the literature is very poor yet in such results, see \cite{MKSL,Ljems}. Depending on the setting of the parameters involved we obtained both weak and strong convergence of the generated trajectories to a minimizer of the objective function and also fast convergence rates and integral estimates involving the function values in a generated trajectory, velocity and gradient. Our results not only extend but also improve the results from the literature concerning second order dynamical systems with a Tikhonov regularization term.

\section{Statements and Declarations}
{\bf Availability of data and materials}

In this manuscript only the datasets generated by authors were analysed.

{\bf Funding}

The research leading to these results received funding from Romanian Ministry of Research, Innovation and Digitization, CNCS-UEFISCDI, project number PN-III-P1-1.1-TE-2021-0138, within PNCDI III .

{\bf Competing interests}

 The author has no relevant financial or non-financial interests to disclose.

\section{Appendix}
\begin{lemma}\label{nullimit}[Lemma A.3 from \cite{ACR}] Let $\d>0$ and $f\in L^1((\d,+\infty),\R)$ be a nonnegative and continuous function.  Let $\varphi:[\d,+\infty)\To[0,+\infty)$ be a nondecreasing function such that $\lim\limits_{t\to+\infty}\varphi(t)=+\infty.$ Then it holds
$$\lim\limits_{t\to+\infty}\frac{1}{\varphi(t)}\int_\d^t\varphi(s)f(s)ds=0.$$
\end{lemma}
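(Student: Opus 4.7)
The plan is to use the standard $\varepsilon$-splitting argument that exploits the integrability of $f$ together with the monotonicity of $\varphi$. The key observation is that although $\varphi(s)f(s)$ need not be integrable, the tail of $f$ can be made arbitrarily small, and on the tail we may bound $\varphi(s)$ by $\varphi(t)$ using monotonicity.

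First I would fix an arbitrary $\varepsilon>0$. Since $f\in L^1((\delta,+\infty),\R)$, I choose $T_\varepsilon>\delta$ so that $\int_{T_\varepsilon}^{+\infty}f(s)\,ds<\varepsilon$. Then for every $t>T_\varepsilon$ I split
\[
\int_{\delta}^{t}\varphi(s)f(s)\,ds
=\int_{\delta}^{T_\varepsilon}\varphi(s)f(s)\,ds
+\int_{T_\varepsilon}^{t}\varphi(s)f(s)\,ds.
\]
The first piece is a fixed finite constant $C_\varepsilon$: since $\varphi$ is nondecreasing on the compact interval $[\delta,T_\varepsilon]$, we have $\varphi(s)\le\varphi(T_\varepsilon)$ there, and $f$ is continuous hence bounded on that interval, so $C_\varepsilon\le\varphi(T_\varepsilon)\int_{\delta}^{T_\varepsilon}f(s)\,ds<+\infty$. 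For the second piece I use monotonicity again in the other direction: for $s\in[T_\varepsilon,t]$ one has $\varphi(s)\le\varphi(t)$ (since $\varphi$ is nondecreasing and $s\le t$), hence
\[
\int_{T_\varepsilon}^{t}\varphi(s)f(s)\,ds\le \varphi(t)\int_{T_\varepsilon}^{t}f(s)\,ds\le \varphi(t)\,\varepsilon.
\]

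Dividing by $\varphi(t)$ (eventually positive, since $\varphi(t)\to+\infty$) yields
\[
\frac{1}{\varphi(t)}\int_{\delta}^{t}\varphi(s)f(s)\,ds\le \frac{C_\varepsilon}{\varphi(t)}+\varepsilon.
\]
Letting $t\to+\infty$ and using $\varphi(t)\to+\infty$ gives $\limsup_{t\to+\infty}\tfrac{1}{\varphi(t)}\int_{\delta}^{t}\varphi(s)f(s)\,ds\le\varepsilon$. Since $\varepsilon>0$ is arbitrary and the integrand is nonnegative, the limit exists and equals $0$.

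There is no real obstacle here; the argument is a clean two-line $\varepsilon$-splitting. The only delicate point to mention is that one must verify the initial chunk $\int_{\delta}^{T_\varepsilon}\varphi(s)f(s)\,ds$ is finite, which follows immediately from the local boundedness of the nondecreasing function $\varphi$ on the compact interval $[\delta,T_\varepsilon]$ together with continuity of $f$.
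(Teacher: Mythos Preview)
Your argument is correct and is exactly the standard $\varepsilon$-splitting proof of this classical fact. Note, however, that the paper does not supply its own proof of this lemma: it merely states it in the Appendix as Lemma~A.3 of \cite{ACR}, so there is nothing to compare against beyond observing that your proof is the expected one.

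One minor stylistic remark: your parenthetical justification that ``$f$ is continuous hence bounded on that interval'' is not quite right as stated, since $f$ is only assumed continuous on the open interval $(\delta,+\infty)$ and could in principle blow up at $\delta$. But this does not matter for your argument, because the bound you actually use, $C_\varepsilon\le\varphi(T_\varepsilon)\int_{\delta}^{T_\varepsilon}f(s)\,ds$, relies only on the integrability of $f$ and the monotonicity of $\varphi$, both of which are given. So the proof stands; just drop the unnecessary remark about boundedness of $f$.
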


\begin{lemma}\label{fejer-cont1}[Lemma 5.1 from \cite{abbas-att-sv}] Suppose that $F:[t_0,+\infty)\To\R$ is locally absolutely continuous and bounded from below and that
there exists $G\in L^1([t_0,+\infty),\R)$ such that
$$\frac{d}{dt}F(t)\leq G(t)$$
for almost every $t \in [t_0,+\infty)$. Then there exists $\lim\limits_{t\to +\infty} F(t)\in\R$.
\end{lemma}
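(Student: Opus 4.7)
The plan is to reduce this to the standard fact that a monotone bounded function has a limit. The key move is to subtract off an antiderivative of the ``bad part'' so that what remains is monotone.

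First I would introduce the auxiliary function
\[
H(t) := F(t) - \int_{t_0}^{t} G(s)\,ds, \qquad t \ge t_0.
\]
Since $F$ is locally absolutely continuous and $s \mapsto \int_{t_0}^{s} G(u)\,du$ is absolutely continuous on every bounded interval (as $G$ is locally $L^1$), the difference $H$ is itself locally absolutely continuous. Consequently, for almost every $t \ge t_0$,
\[
\dot{H}(t) = \dot{F}(t) - G(t) \le 0
\]
by the hypothesis. A locally absolutely continuous function whose derivative is nonpositive almost everywhere is non-increasing (this follows from the fundamental theorem of calculus for absolutely continuous functions: $H(t_2) - H(t_1) = \int_{t_1}^{t_2} \dot{H}(s)\,ds \le 0$ for $t_1 \le t_2$).

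Next I would establish that $H$ is bounded from below. Because $G \in L^1([t_0,+\infty[,\R)$, the function $t \mapsto \int_{t_0}^{t} G(s)\,ds$ is bounded on $[t_0,+\infty[$; combined with the assumption that $F$ is bounded from below, this gives $\inf_{t \ge t_0} H(t) > -\infty$. A non-increasing real-valued function that is bounded from below admits a finite limit at infinity, so $\ell := \lim_{t \to +\infty} H(t)$ exists in $\R$.

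Finally, since $G \in L^1([t_0,+\infty[,\R)$, the limit $I := \lim_{t \to +\infty} \int_{t_0}^{t} G(s)\,ds = \int_{t_0}^{+\infty} G(s)\,ds$ exists and is finite. Therefore
\[
\lim_{t \to +\infty} F(t) = \lim_{t \to +\infty} \Bigl( H(t) + \int_{t_0}^{t} G(s)\,ds \Bigr) = \ell + I \in \R,
\]
which is the conclusion. There is essentially no hard step here; the only point requiring a moment's care is justifying that ``$\dot{H} \le 0$ a.e.'' implies $H$ is non-increasing, which requires the absolute continuity hypothesis (a merely differentiable-a.e. function need not satisfy this, as Cantor-type examples show).
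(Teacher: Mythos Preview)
Your proof is correct and is the standard argument for this result. Note that the paper does not actually supply a proof of this lemma; it is quoted without proof from \cite{abbas-att-sv} (Lemma~5.1 there), so there is no in-paper argument to compare against. Your reduction via $H(t)=F(t)-\int_{t_0}^t G(s)\,ds$ to a monotone bounded-below function is exactly the approach used in that reference.
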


The continuous version of  the Opial Lemma (see \cite{att-c-p-r-math-pr2018}) is the main tool for proving weak convergence for the generated trajectory.

\begin{lemma}\label{Opial}
Let $S \subseteq \mathcal{H}$ be  a nonempty set and $x : [t_0, +\infty[ \to H$ a given map such that:
\begin{align*}
& (i) \quad \mbox{for every }z \in S \ \mbox{the limit} \ \lim\limits_{t \To +\infty} \| x(t) - z \|  \ \mbox{exists};\\
& (ii) \quad \text{every weak sequential limit point of } x(t) \text{ belongs to the set }S.
\end{align*}
Then the trajectory $x(t)$ converges weakly to an element in $S$ as $t \to + \infty$.
\end{lemma}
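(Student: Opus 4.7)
The plan is to adapt the standard proof of Opial's lemma to the continuous setting. First I would observe that hypothesis $(i)$ forces the trajectory to be bounded: fixing any $z_0\in S$, the map $t\mapsto\|x(t)-z_0\|$ converges as $t\to+\infty$ and is therefore bounded, so $\|x(t)\|\le\|x(t)-z_0\|+\|z_0\|$ is bounded as well. Since $\mathcal{H}$ is Hilbert, the Banach--Alaoglu theorem guarantees that every sequence $(x(t_n))$ with $t_n\to+\infty$ admits a weakly convergent subsequence, and by hypothesis $(ii)$ every such weak subsequential limit lies in $S$. In particular, the set $W$ of weak sequential cluster points of $x(t)$ is a nonempty subset of $S$.

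The core of the argument is to show that $W$ is a singleton. Given two weak cluster points $x_1,x_2\in W\subseteq S$, I would use the polarization identity
\begin{equation*}
\|x(t)-x_1\|^2-\|x(t)-x_2\|^2=2\langle x(t),x_2-x_1\rangle+\|x_1\|^2-\|x_2\|^2.
\end{equation*}
By $(i)$ applied to the two points $x_1,x_2\in S$, both terms on the left-hand side admit a limit as $t\to+\infty$, so $\ell:=\lim_{t\to+\infty}\langle x(t),x_2-x_1\rangle$ exists in $\mathbb{R}$. Choosing sequences $t_n,s_n\to+\infty$ along which $x(t_n)\rightharpoonup x_1$ and $x(s_n)\rightharpoonup x_2$, the weak convergence yields $\ell=\langle x_1,x_2-x_1\rangle$ and $\ell=\langle x_2,x_2-x_1\rangle$. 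Subtracting gives $\|x_2-x_1\|^2=0$, so $x_1=x_2$.

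Having established uniqueness of the weak cluster point, call it $\bar x\in S$. To upgrade from ``unique weak cluster point'' to ``$x(t)\rightharpoonup\bar x$'' as $t\to+\infty$, I would argue by contradiction: if weak convergence failed, there would exist a weak neighbourhood $V$ of $\bar x$ and a sequence $t_n\to+\infty$ with $x(t_n)\notin V$; but boundedness of $(x(t_n))$ together with reflexivity of $\mathcal{H}$ yields a further weakly convergent subsequence, whose limit is a cluster point of $x(t)$ and must therefore equal $\bar x$, contradicting $x(t_n)\notin V$. This completes the proof. The whole argument is essentially routine; the only slightly delicate step is the passage from sequential weak convergence to continuous-parameter weak convergence, which is handled by the sequential-characterization/contradiction argument just sketched.
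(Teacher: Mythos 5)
Your proof is correct and complete. The paper does not prove this lemma at all --- it states it in the Appendix as a known result, citing the reference \cite{att-c-p-r-math-pr2018} --- and your argument (boundedness from $(i)$, weak sequential compactness of bounded sets in $\mathcal{H}$, uniqueness of the weak cluster point via the identity $\|x(t)-x_1\|^2-\|x(t)-x_2\|^2=2\langle x(t),x_2-x_1\rangle+\|x_1\|^2-\|x_2\|^2$ together with hypothesis $(i)$, and the final subsequence/contradiction step) is precisely the classical Opial argument found in that literature, so there is nothing to add.
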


The following lemma from \cite{L-jde} is essential for proving weak convergence.
\begin{lemma}\label{forwconv}(Lemma 16 from \cite{L-jde}) Let $t_0>0, \,\a>0,\, 0 <q<1$ and let $w:[t_0, +\infty[\to\R$ be a continuously differentiable function which is bounded from below. Consider $k:[t_0, +\infty[\to\R$ a nonnegative function and assume that the function $\int_{t}^{+\infty}\frac{dT}{ Q(T)}Q(t)k(t)\in L^1[t_0, +\infty[$, where $Q(T) :=\exp\left(\int_{t_0}^T \frac{\a}{s^q}ds\right).$ Assume further, that $$\ddot{w}(t)+\frac{\a}{t^q}\dot{w}(t)\le k(t)$$ for almost every $t>t_0.$ Then, the positive part $[\dot{w}]_+$ of $\dot{w}$ belongs to $L^1[t_0, +\infty[,$ and $lim_{t\to+\infty}w(t)$ exists.
\end{lemma}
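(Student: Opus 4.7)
The natural first move is to recognize the left-hand side of the hypothesis as an exact derivative after multiplication by the integrating factor $Q$. Indeed, $Q'(t)/Q(t)=\alpha/t^q$, so the inequality becomes $\frac{d}{dt}\bigl(Q(t)\dot w(t)\bigr)\le Q(t)k(t)$ almost everywhere. Since $w\in C^1$ and the inequality holds a.e., the map $t\mapsto Q(t)\dot w(t)$ is locally absolutely continuous, so integrating on $[t_0,t]$ yields $Q(t)\dot w(t)\le Q(t_0)\dot w(t_0)+\int_{t_0}^{t}Q(s)k(s)\,ds$, and therefore
\[
\dot w(t)\le \frac{Q(t_0)\dot w(t_0)}{Q(t)}+\frac{1}{Q(t)}\int_{t_0}^{t}Q(s)k(s)\,ds.
\]

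To establish $[\dot w]_+\in L^{1}[t_0,+\infty[$, I would estimate the positive part of the right-hand side. Because $0<q<1$, one has $\int_{t_0}^{T}\alpha s^{-q}\,ds=\tfrac{\alpha}{1-q}(T^{1-q}-t_0^{1-q})$, so $Q(t)$ grows like $\exp\bigl(\tfrac{\alpha}{1-q}t^{1-q}\bigr)$; in particular $1/Q\in L^{1}[t_0,+\infty[$, which handles the first summand. For the second, the Fubini--Tonelli theorem (applicable because $Q,k\ge 0$) gives
\[
\int_{t_0}^{+\infty}\frac{1}{Q(t)}\int_{t_0}^{t}Q(s)k(s)\,ds\,dt=\int_{t_0}^{+\infty}Q(s)k(s)\Biggl(\int_{s}^{+\infty}\frac{dT}{Q(T)}\Biggr)\,ds,
\]
and the right-hand side is exactly the integral the lemma assumes to be finite. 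Hence $[\dot w]_+\in L^{1}[t_0,+\infty[$.

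For the existence of $\lim_{t\to+\infty}w(t)$, decompose $\dot w=[\dot w]_+-[\dot w]_-$ and write $w(t)=w(t_0)+\int_{t_0}^{t}[\dot w]_+\,ds-\int_{t_0}^{t}[\dot w]_-\,ds$. The first integral has a finite limit by the $L^{1}$ bound just obtained. The second is nondecreasing in $t$; and since $w$ is bounded below while $\int_{t_0}^{t}[\dot w]_+\,ds$ is bounded above, the identity forces $\int_{t_0}^{t}[\dot w]_-\,ds$ to be bounded above as well, hence convergent. Consequently $\lim_{t\to+\infty}w(t)$ exists in $\mathbb R$.

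The only mildly delicate point is the integrating-factor manipulation under a.e.\ regularity of $\ddot w$; this is handled by the standard observation that $t\mapsto Q(t)\dot w(t)$ is locally absolutely continuous with a.e.\ derivative bounded above by the locally integrable function $Q(t)k(t)$, so that the fundamental theorem of calculus applies to the inequality. Once that is in place, the entire proof reduces to the Fubini swap above plus an elementary bounded-monotone argument, both of which are routine.
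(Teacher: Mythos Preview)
Your argument is correct and follows the standard route for this type of lemma: multiply by the integrating factor $Q$, integrate the resulting exact inequality, then use Fubini to identify the double integral with the hypothesis, and conclude via the bounded-below decomposition of $w$. Each step is sound, including the verification that $1/Q\in L^{1}[t_0,+\infty[$ (which follows from $Q(t)=\exp\bigl(\tfrac{\alpha}{1-q}(t^{1-q}-t_0^{1-q})\bigr)$ and $0<q<1$).

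Regarding comparison: the present paper does not actually supply a proof of this lemma; it is quoted verbatim from \cite{L-jde} and placed in the Appendix without argument. The proof in \cite{L-jde} follows exactly the same integrating-factor and Fubini scheme you have written, so there is no substantive difference to report.
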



\begin{thebibliography}{999}







\bibitem{abbas-att-sv} B. Abbas, H. Attouch, B.F. Svaiter, {\it Newton-like dynamics and forward-backward methods for structured monotone inclusions in Hilbert spaces}, Journal of Optimization Theory and its Applications, 161(2), 331-360, 2014
\bibitem{ALSiopt} C.D. Alecsa, S.C. L\'aszl\'o, {\it Tikhonov Regularization of a Perturbed Heavy Ball System with Vanishing Damping}, SIAM Journal on Optimization, 31(4), 2921-2954, 2021
\bibitem{ALP} C.D. Alecsa, S.C. L\' aszl\' o, T. Pin\c ta, {\it An Extension of the Second Order Dynamical System that Models Nesterov’s Convex Gradient Method}, Applied Mathematics and Optimization, 84, 1687-1716, 2021
\bibitem{ALV} C.D. Alecsa, S.C. L\'aszl\'o, A. Viorel, {\it A gradient type algorithm with backward inertial steps associated to a nonconvex minimization problem},  Numerical Algorithms, 84(2), 485-512, 2020
\bibitem{alv-att-bolte-red}  F. Alvarez, H. Attouch, J. Bolte, P.  Redont, {\it A second-order gradient-like dissipative dynamical system with Hessian-driven damping. Application to optimization and mechanics}, Journal de Math\'{e}matiques Pures et Appliqu\'{e}es, 81(8), 747-779, 2002
\bibitem{ABCR} H. Attouch, A. Balhag, Z. Chbani, H. Riahi, {\it Damped inertial dynamics with vanishing Tikhonov regularization: Strong asymptotic convergence towards the minimum norm solution}, Journal of Differential Equations,  311, 29-58, 2022
\bibitem{ABCRamop} H. Attouch, A. Balhag, Z. Chbani, H. Riahi, {\it Accelerated Gradient Methods Combining Tikhonov Regularization with Geometric Damping Driven by the Hessian}, Appl Math Optim, 88, 29, 2023
\bibitem{AC} H. Attouch, Z.  Chbani, {\it Fast inertial dynamics and FISTA algorithms in convex optimization. Perturbation aspects.}, arxiv.org/abs/1507.01367 (2015)
\bibitem{ACFR} H.  Attouch, Z. Chbani, J. Fadili, H. Riahi, {\it First-order optimization algorithms via inertial systems with Hessian driven damping},  Mathematical Programming, 193,  113–155, 2022
\bibitem{att-c-p-r-math-pr2018} H. Attouch, Z. Chbani, J. Peypouquet, P. Redont, {\it Fast convergence of inertial dynamics and algorithms with asymptotic vanishing viscosity}, Mathematical Programming 168(1-2) Ser. B, 123-175, 2018
\bibitem{ACR} H. Attouch, Z. Chbani, H. Riahi, {\it Combining fast inertial dynamics for convex optimization with Tikhonov regularization}, Journal of Mathematical Analysis and Applications, 457(2), 1065-1094, 2018
\bibitem{att-com1996} H. Attouch, R. Cominetti, {\it A dynamical approach to convex minimization coupling approximation with the steepest descent method},  Journal of Differential Equations 128(2), 519-540, 1996
\bibitem{AGR} H. Attouch, X. Goudou, P.  Redont, {\it The heavy ball with friction method, I. The continuous dynamical system: global exploration of the local minima of a real-valued function by asymptotic analysis of a dissipativ dynamical system}, Communications in Contemporary Mathematics 2, 1-34, 2000
\bibitem{AL-nemkoz} {H. Attouch, S.C. L\'aszl\'o}, {\it Convex optimization via inertial algorithms with vanishing Tikhonov regularization: fast convergence to the minimum norm solution}, https://arxiv.org/abs/2104.11987 (2021)
\bibitem{att-p-r-jde2016}  H. Attouch, J. Peypouquet, P. Redont, {\it Fast convex optimization via inertial dynamics with Hessian driven damping}, J. Differential Equations, 261(10), 5734-5783, 2016
\bibitem{APR} H. Attouch, J. Peypouquet, P. Redont, {\it A Dynamical Approach to an Inertial Forward-Backward Algorithm for Convex Minimization}, SIAM J. Optim., 24(1), 232–256, 2014
\bibitem{BM}  M. Balti, R.  May,  {\it  Asymptotic for the perturbed heavy ball system
with vanishing damping term}, Evolution Equations \& Control Theory, 6(2), 177-186, 2017
\bibitem{BCL} {R.I. Bo\c t, E.R. Csetnek, S.C. L\'aszl\'o}, {\it Tikhonov regularization of a second order dynamical system with Hessian damping}, Mathematical  Programming, 189(1), 151–186, 2021
\bibitem{BCL-AA} Bo\c t¸, R.I., Csetnek, E.R., L\'aszl\'o,  S.C.: A second order dynamical approach with variable damping to nonconvex smooth minimization. Applicable Analysis, 99(3), 361-378, 2018
\bibitem{BCL-jee} R.I. Bo\c t¸, E.R. Csetnek, S.C. L\'aszl\'o, {\it  Approaching nonsmooth nonconvex minimization through
second-order proximal-gradient dynamical systems}, Journal of Evolution Equations, 18(3), 1291-1318, 2018
\bibitem{BCL-jma} { R.I. Bo\c t, E.R. Csetnek, S.C. L\'aszl\'o}, {\it On the strong convergence of continuous Newton-like inertial dynamics with Tikhonov regularization for monotone inclusions}, Journal of Mathematical Analysis and Applications, 530(2),  2024
\bibitem{CAG1}  A. Cabot, H. Engler, S.  Gadat, {\it  On the long time behavior of second order differential equations with asymptotically small dissipation}, Trans. Amer. Math. Soc., 361, 5983-6017, 2009
\bibitem{CAG2} A. Cabot, H. Engler, S.  Gadat, {\it  Second order differential equations with asymptotically small dissipation and piecewise at potentials}, Electronic Journal of Differential Equations, 17, 33-38, 2009
\bibitem{MKSL} M. Karapetyants, S.C. L\'aszl\'o,{\it  A Nesterov type algorithm with double Tikhonov regularization: fast convergence of the function values and strong convergence to the minimal norm solution}, https://arxiv.org/abs/2308.05056 (2023)
\bibitem{L-mapr}  S.C. L\'aszl\'o, {\it Convergence rates for an inertial algorithm of gradient type associated to a smooth nonconvex minimization},     Mathematical Programming, 190(1-2), 285-329, 2021
\bibitem{L-jde} S.C. L\'aszl\'o, {\it On the strong convergence of the trajectories of a Tikhonov regularized second order dynamical system with asymptotically vanishing damping}, Journal of Differential Equations, 362, 355-381, 2023
\bibitem{Ljems} S.C. L\'aszl\'o, {\it On the convergence of an inertial proximal algorithm with a Tikhonov regularization term},\\ https://arxiv.org/abs/2302.02115 (2023)
\bibitem{MJ}  M. Muehlebach, M.I. Jordan, {\it A Dynamical Systems Perspective on Nesterov Acceleration}, Proceedingsof the 36-th International Conference on Machine Learning, Long Beach, California, PMLR 97,2019
\bibitem{Nest} Y. Nesterov, {\it Introductory lectures on convex optimization: a basic course}, Kluwer Academic Publishers, Dordrecht, 2004
\bibitem{nesterov83} Y. Nesterov, {\it A method for solving the convex programming problem with convergence rate $O(1/k^2)$}, (Russian) Dokl. Akad. Nauk SSSR 269(3), 543-547, 1983
\bibitem{poly}  B.T. Polyak, {\it Some methods of speeding up the convergence of iteration methods}, U.S.S.R. Comput. Math. Math. Phys. 4(5), 1-17, 1964
\bibitem{SDJS} B. Shi, S.S  Du, M.I. Jordan,  W.J. Su, {\it Understanding the Acceleration Phenomenon via High-Resolution
Differential Equations}, Mathematical Programming, 195, 79–148, 2022
\bibitem{SBC} W. Su, S. Boyd, E.J Candes, {\it A differential equation for modeling Nesterov's
accelerated gradient method: theory and insights}, Journal of Machine Learning Research 17, 1-43, 2016
\end{thebibliography}
\end{document}